\setlist[enumerate,1]{wide, labelindent=0pt,label={\upshape(\roman*)}}
\def\Xint#1{\mathchoice
{\XXint\displaystyle\textstyle{#1}}%
{\XXint\textstyle\scriptstyle{#1}}%
{\XXint\scriptstyle\scriptscriptstyle{#1}}%
{\XXint\scriptscriptstyle\scriptscriptstyle{#1}}%
\!\int}
\def\XXint#1#2#3{{\setbox0=\hbox{$#1{#2#3}{\int}$ }
\vcenter{\hbox{$#2#3$ }}\kern-.6\wd0}}
\def\dashint{\Xint-}
\newlength{\bibitemsep}\setlength{\bibitemsep}{.10\baselineskip plus .05\baselineskip minus .05\baselineskip}
\newlength{\bibparskip}\setlength{\bibparskip}{0pt}
\let\oldthebibliography\thebibliography
\renewcommand\thebibliography[1]{%
 \oldthebibliography{#1}%
 \setlength{\parskip}{\bibitemsep}%
 \setlength{\itemsep}{\bibparskip}%
}
\newtheorem{theorem}{Theorem}[section]
\newtheorem{definition}[theorem]{Definition}
\newtheorem{proposition}[theorem]{Proposition}
\newtheorem{corollary}[theorem]{Corollary}
\newtheorem{lemma}[theorem]{Lemma}
\newtheorem{remark}[theorem]{Remark}
\newtheorem{example}[theorem]{Example}
\newtheorem{examples}[theorem]{Examples}
\newtheorem{foo}[theorem]{Remarks}
\newtheorem{open question}[theorem]{Open Question}
\newtheorem{c/p}[theorem]{Conjecture/Proposition}
\newenvironment{Example}{\begin{example}\rm}{\end{example}}
\newcommand{\brak}[1]{\left(#1\right)} % round brackets
\def\Xint#1{\mathchoice
    {\XXint\displaystyle\textstyle{#1}}%
    {\XXint\textstyle\scriptstyle{#1}}%
    {\XXint\scriptstyle\scriptscriptstyle{#1}}%
    {\XXint\scriptscriptstyle\scriptscriptstyle{#1}}%
    \!\int}
    \def\XXint#1#2#3{{\setbox0=\hbox{$#1{#2#3}{\int}$}
    \vcenter{\hbox{$#2#3$}}\kern-.5\wd0}}
\def\vint{\mathop{\mathchoice%
 {\setbox0\hbox{$\displaystyle\intop$}\kern 0.22\wd0%
 \vcenter{\hrule width 0.6\wd0}\kern -0.82\wd0}%
 {\setbox0\hbox{$\textstyle\intop$}\kern 0.2\wd0%
 \vcenter{\hrule width 0.6\wd0}\kern -0.8\wd0}%
 {\setbox0\hbox{$\scriptstyle\intop$}\kern 0.2\wd0%
 \vcenter{\hrule width 0.6\wd0}\kern -0.8\wd0}%
 {\setbox0\hbox{$\scriptscriptstyle\intop$}\kern 0.2\wd0%
 \vcenter{\hrule width 0.6\wd0}\kern -0.8\wd0}}%
 \mathopen{}\int}
\newcommand{\pip}{\varphi}
\newcommand{\CS}{\mathcal S}
\newcommand{\eps}{\varepsilon}
\title{Korevaar-Schoen and heat kernel characterizations \\ of Sobolev and BV spaces on local trees}
\author{Fabrice Baudoin\footnote{F.B.'s research is partially funded by Villum Fonden through a Villum Investigator grant and by grant 10.46540/4283-00175B from Independent Research Fund Denmark}, Li Chen\footnote{L.C.'s research is partially funded by grant 10.46540/4283-00175B from Independent Research Fund Denmark}, Meng Yang}
\date{}
\begin{document}

\maketitle

\begin{abstract}
 We study Sobolev and BV spaces on local trees which are metric spaces locally isometric to real trees. Such spaces are equipped with a Radon measure satisfying a locally uniform volume growth condition. Using the intrinsic geodesic structure, we define weak gradients and develop from it a coherent theory of Sobolev and BV spaces. We provide two main characterizations: one via Korevaar–Schoen type energy functionals and another via the heat kernel associated with the natural Dirichlet form. Applications include interpolation results for Besov–Lipschitz spaces, critical exponent computations, and a Nash inequality. In globally tree-like settings, we also establish $L^p$ gradient bounds for the heat semigroup.
\end{abstract}

\tableofcontents

\section{Introduction}

The study of analysis on metric measure spaces has become a central theme in modern analysis, driven by the need to extend classical calculus and partial differential equations (PDEs)  beyond the smooth setting of Euclidean spaces or Riemannian manifolds. Fundamental objects in this pursuit are Besov or Sobolev spaces and spaces of functions of bounded variation, which provide the natural framework for variational problems, geometric measure theory, and the analysis of PDEs in non-smooth environments. Defining and characterizing these spaces on general metric measure spaces $(X, d, m)$ presents significant challenges due to the lack of readily available differential structures. Various approaches have been developed, including Haj{\l}asz-Sobolev spaces \cite{MR1401074}, Newtonian spaces based on upper gradients \cite{MR3363168}, and definitions via heat semigroup regularisation \cite{BV2,BV3,BV1}, Korevaar-Schoen type energy functionals \cite{Bau22,kajino2024korevaarschoenpenergyformsassociated} or $p$-energy forms \cite{kajino2025penergyformsfractalsrecent,murugan2025firstordersobolevspacesselfsimilar,sasaya2025constructionpenergymeasuresassociated}.

In this paper, we focus on a specific, yet rich, class of metric measure spaces known as local trees. A metric space $(X, d)$ is a local tree if it is locally isometric to a real tree (Definition 2.2). Real trees themselves are characterized by the existence of unique geodesics between any two points and the absence of loops (Definition 2.1). The class of local trees encompasses a diverse range of structures, including classical trees, metric graphs or cable systems (Example 2.4), and certain fractal-like spaces such as the Sierpi{\'n}ski carpet cable system (Figure 1) or modifications of the Vicsek set (Figure 4, Example 2.5). These spaces, while possessing a clear local 1-dimensional structure along their "branches" or "edges", can exhibit complex global topology and singularities, such as  Hausdorff dimensions  greater than 1.

The analysis on such spaces requires some basic assumptions. We consider a local tree $(X, d)$ equipped with a Radon measure $m$ that satisfies a uniform volume control condition: $m(B(x,r)) \approx \Phi(r)$ for some doubling function $\Phi$ and small $r$. Crucial to our analysis, the notion of gradient is tied to the intrinsic 1-dimensional structure. Functions are defined to be absolutely continuous (AC) if their variation along local geodesics can be represented by integration against a weak gradient $\partial f$ (Definition 2.7). This weak gradient naturally lives on the "skeleton" $\mathcal S$ of the space (the union of all non-trivial geodesic open segments) and is measured with respect to the natural length measure $\nu$ (Section 2.1). The Sobolev space $W^{1,p}(X, m)$ then consists of functions $f \in L^p(X, m)$ whose weak gradient $\partial f$ belongs to $L^p(\mathcal S, \nu)$ (Definition 2.13). The space $BV(X, m)$ is defined via $L^1$-relaxation of the 1-energy $\int_\mathcal{S} |\partial f| \, d\nu$ (Definition 2.17). Notably, in this general framework the reference measure $m$ and the length measure $\nu$ can be mutually singular (e.g., $m(\mathcal S)=0$ is possible).

The central aim of this work is to provide robust  characterizations of these intrinsically defined Sobolev $W^{1,p}$ and BV spaces on local trees satisfying  uniform local volume control and a uniform local tree property (meaning the local isometry to a tree holds uniformly up to a certain scale). Such characterizations are valuable as they connect the intrinsic definition to other analytical viewpoints, offer alternative (sometimes more practical) ways to  estimate norms, and facilitate the transfer of techniques from different areas of analysis. We establish two main types of characterizations: one based on Korevaar-Schoen (KS) type energy functionals and another based on heat kernel estimates.

The first main contribution is the Korevaar-Schoen type characterization. Originating from the work of Korevaar and Schoen \cite{MR1266480} in the context of harmonic maps, this approach characterizes smoothness or regularity by measuring the averaged $p$-th power of function differences at small scales. We define specific KS energy functionals

$$E_{p,\Psi_p}(f, r)=\frac{1}{\Psi_p(r)}\int_X \frac{1}{m(B(x,r))}\int_{B(x,r)} |f(y)-f(x)|^p dm(y) dm(x)$$
where $\Psi_p(r) = r^{p-1}\Phi(r)$ is related to the volume growth (Section 3.3). We prove that for $p > 1$, the $W^{1,p}(X, m)$ norm is equivalent to the Korevaar-Schoen norm (Theorem 3.7). A corresponding result holds for $p = 1$, characterizing the BV seminorm $||\partial f||(X)$ in terms of $E_{1,\Psi_1}(f, r)$ (Theorem 3.8). Establishing these equivalences requires significant technical work; in particular, we construct controlled partitions of unity adapted to the local tree structure and the volume measure (Lemma 3.3, Theorem 3.5), which are of independent interest. Along the way we establish the weak monotonicity property of the KS functional:
\[
\sup_{r \in (0,r_0]} E_{p,\Psi_p}(f, r) \le C \liminf_{r \to 0} E_{p,\Psi_p}(f, r).
\]
It is remarkable that such weak monotonicity property holds assuming only the uniform volume control at small scale since this usually requires $p$-capacity  estimates type assumptions, see \cite{Bau22, murugan2025firstordersobolevspacesselfsimilar,shimizu2025characterizationssobolevfunctionsbesovtype}. This means that in our setting of local trees those capacity type estimates automatically follow from the uniform volume assumption, a situation which is similar to the smooth Riemannian setting. The key lemma illustrating this phenomenon is Lemma 3.3 which actually only relies on the space being a uniformly locally doubling local tree. 

The second main contribution lies in the heat kernel characterization. This approach connects the function spaces to the behavior of the heat semigroup $e^{t\Delta}$ associated with the natural energy form $\mathcal E_2(f) = \int_\mathcal{S} |\partial f|^2 \, d\nu$. Following \cite{KumagaiPRIMS}, we first establish detailed estimates for the corresponding heat kernel $p_t(x, y)$. We derive two-sided sub-Gaussian-type bounds reflecting the metric $d$ and the volume growth $\Phi$, specifically relating the time scale $t$ to a length scale via $r = \Psi_2^{-1}(t)$ where $\Psi_2(r) = r\Phi(r)$ (Theorem 4.3, 4.5, 4.7, Section 4.1). These estimates include upper bounds, near-diagonal lower bounds, and escape rate estimates. With these kernel estimates in hands, we introduce the heat kernel based norm and  prove that the $W^{1,p}$ (for $p > 1$) and BV (for $p = 1$) norms are equivalent to the heat kernel based norms (Theorem 4.8). This result places the analysis on local trees within the broader framework of heat kernel based function space theory developed in recent years \cite{BV1,BV2,BV3}.

Furthermore, we explore applications of these characterizations. We use the KS characterization to determine critical exponents for Besov-Lipschitz spaces $B^{\alpha,p}(X, m)$ on these local trees and establish real interpolation results, showing that these Besov spaces form an interpolation scale between $L^p$ and $W^{1,p}$ (or BV for $p=1$) (Theorem 3.11). We also derive a Nash inequality adapted to the volume growth (Theorem 3.12). Finally, we discuss how these results can be refined in the specific case where $(X, d)$ is a global tree with global volume estimates (Section 5). In that global tree case, we also establish $L^p$ gradient estimates for the heat semigroup.

We structure the paper as follows. Section 2 introduces local trees and defines the relevant function spaces (AC, $W^{1,p}$, BV) and energy forms. Section 3 develops the Korevaar-Schoen type characterization, including the construction of partitions of unity and applications to interpolation and Nash inequalities. Section 4 focuses on the heat kernel approach, establishing the necessary kernel estimates and proving the heat kernel based characterization of $W^{1,p}$ and BV spaces. Section 5 discusses extensions and refinements specific to the global tree setting.

\section{Sobolev spaces on local trees}

\subsection{Local trees}

We first recall the following definition of (real) tree.

\begin{definition}
 A metric space $(X,d)$ is said to be a tree  if it satisfies the following two properties:
\begin{enumerate}
    \item \textbf{Unique geodesic.} For all $u, v \in X$ there exists a unique isometric embedding $\phi_{u,v} : [0, d(u, v)] \to X$  such that $\phi_{u,v} (0) = u$ and $\phi_{u,v} (d(u, v)) = v$.
\item  \textbf{Loop-free}. For every injective continuous map $\kappa : [0, 1] \to  X$ one has: $$\kappa([0, 1]) = \phi_{\kappa(0),\kappa(1)}([0, d(\kappa(0), \kappa(1))]).$$
\end{enumerate}
\end{definition}

There exists a vast literature about trees, see for instance \cite{AEW,Chiswell} and references therein.  In that paper we are interested in spaces locally isometric to trees.

\begin{definition}
 A metric space $(X,d)$ is called a local tree if it is locally isometric to a real tree, that is for every $x \in X$, there exists $\iota (x) >0$ such that $X \cap B(x,\iota (x))$  is a real tree. 
\end{definition} 

\begin{example}
Any  tree is of course a local tree.
\end{example}

\begin{example}[Cable systems]\label{cable system example}

Intuitively, a cable system  is a connected  collection of intervals which are glued together at the vertices of a graph. More precisely,  a cable system $\mathbf{G}$ is composed of a set of vertices $V$, a set of (internal) edges $\mathbf{E}$ and a set of rays $\mathbf{R}$. The graph $(V,\mathbf{E})$ is assumed to be connected, locally finite and simple. For each edge $e\in \mathbf{E} $, there are two endpoints $e^-$ and $e^+$ in $V$ as well as a length $r(e)>0$. Each ray has one associated endpoint $e^-$ in $V$ and the length is infinite.   For $e\in\mathbf{E}$ let $I_e = [0,r(e)]$ and if $e\in\mathbf{R}$ then $I_e = [0,\infty)$.  For $e\in\mathbf{E}$ let $I_e = [0,r(e)]$ and if $e\in\mathbf{R}$ then $I_e = [0,\infty)$. In this case the metric graph $\mathbf{G}$  is the metric space  obtained by replacing the edge $e$ by an isometric copy of $I_e$ and glue them in an obvious way at the vertices. The metric  $d$ on  $\mathbf{G}$ is the obvious geodesic path metric on $\mathbf{G}$.  It is then easy to see that $(\mathbf{G},d)$ is a local tree. We refer to  \cite[Section 2.4]{MR4009428} for further details on cable systems. The figure \ref{SC cable example} illustrates an example of cable system: the Sierpi\'nski carpet cable system, see \cite{DRY23} for details on this example.

\begin{figure}
\centering
\begin{tikzpicture}[scale=0.3]

\foreach\x in {0,6,12}{
\foreach\y in {0,12}{

\foreach\xx in {0,2,4}{
\foreach\yy in {0,4}{
\draw(\x+\xx+0,\y+\yy+0)--(\x+\xx+2,\y+\yy+0)--(\x+\xx+2,\y+\yy+2)--(\x+\xx+0,\y+\yy+2)--cycle;

\draw[fill=black] (\x+\xx+0,\y+\yy+0) circle (0.25);
\draw[fill=black] (\x+\xx+0,\y+\yy+2) circle (0.25);
\draw[fill=black] (\x+\xx+2,\y+\yy+2) circle (0.25);
\draw[fill=black] (\x+\xx+2,\y+\yy+0) circle (0.25);

\draw[fill=black] (\x+\xx+1,\y+\yy+0) circle (0.25);
\draw[fill=black] (\x+\xx+2,\y+\yy+1) circle (0.25);
\draw[fill=black] (\x+\xx+1,\y+\yy+2) circle (0.25);
\draw[fill=black] (\x+\xx+0,\y+\yy+1) circle (0.25);
}
}

\foreach\xx in {0,4}{
\foreach\yy in {2}{
\draw(\x+\xx+0,\y+\yy+0)--(\x+\xx+2,\y+\yy+0)--(\x+\xx+2,\y+\yy+2)--(\x+\xx+0,\y+\yy+2)--cycle;

\draw[fill=black] (\x+\xx+0,\y+\yy+0) circle (0.25);
\draw[fill=black] (\x+\xx+0,\y+\yy+2) circle (0.25);
\draw[fill=black] (\x+\xx+2,\y+\yy+2) circle (0.25);
\draw[fill=black] (\x+\xx+2,\y+\yy+0) circle (0.25);

\draw[fill=black] (\x+\xx+1,\y+\yy+0) circle (0.25);
\draw[fill=black] (\x+\xx+2,\y+\yy+1) circle (0.25);
\draw[fill=black] (\x+\xx+1,\y+\yy+2) circle (0.25);
\draw[fill=black] (\x+\xx+0,\y+\yy+1) circle (0.25);
}
}
}
}

\foreach\x in {0,12}{
\foreach\y in {6}{

\foreach\xx in {0,2,4}{
\foreach\yy in {0,4}{
\draw(\x+\xx+0,\y+\yy+0)--(\x+\xx+2,\y+\yy+0)--(\x+\xx+2,\y+\yy+2)--(\x+\xx+0,\y+\yy+2)--cycle;

\draw[fill=black] (\x+\xx+0,\y+\yy+0) circle (0.25);
\draw[fill=black] (\x+\xx+0,\y+\yy+2) circle (0.25);
\draw[fill=black] (\x+\xx+2,\y+\yy+2) circle (0.25);
\draw[fill=black] (\x+\xx+2,\y+\yy+0) circle (0.25);

\draw[fill=black] (\x+\xx+1,\y+\yy+0) circle (0.25);
\draw[fill=black] (\x+\xx+2,\y+\yy+1) circle (0.25);
\draw[fill=black] (\x+\xx+1,\y+\yy+2) circle (0.25);
\draw[fill=black] (\x+\xx+0,\y+\yy+1) circle (0.25);
}
}

\foreach\xx in {0,4}{
\foreach\yy in {2}{
\draw(\x+\xx+0,\y+\yy+0)--(\x+\xx+2,\y+\yy+0)--(\x+\xx+2,\y+\yy+2)--(\x+\xx+0,\y+\yy+2)--cycle;

\draw[fill=black] (\x+\xx+0,\y+\yy+0) circle (0.25);
\draw[fill=black] (\x+\xx+0,\y+\yy+2) circle (0.25);
\draw[fill=black] (\x+\xx+2,\y+\yy+2) circle (0.25);
\draw[fill=black] (\x+\xx+2,\y+\yy+0) circle (0.25);

\draw[fill=black] (\x+\xx+1,\y+\yy+0) circle (0.25);
\draw[fill=black] (\x+\xx+2,\y+\yy+1) circle (0.25);
\draw[fill=black] (\x+\xx+1,\y+\yy+2) circle (0.25);
\draw[fill=black] (\x+\xx+0,\y+\yy+1) circle (0.25);
}
}
}
}

\end{tikzpicture}
\caption{The Sierpi\'nski carpet cable system}
\label{SC cable example}
\end{figure}
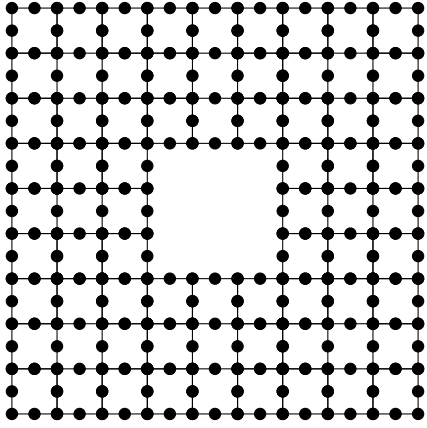

\end{example}

From now on we consider a local tree $(X,d)$ and always assume it is locally compact, connected and separable.  If $x,y \in X$ with $d(x,y) < \iota (x)$ we denote 
\[
[x,y]=\phi_{x,y} ([0, d(x, y)]) \quad \text{and} \quad  ]x,y[=\phi_{x,y} (]0, d(x, y)[).
\]

Given $u,v,w \in B(x,\iota(x))$, there exists a unique point $c(u,v,w) \in B(x,\iota(x))$ such that 
 \[
 [u,w]\cap[u,v]=[u,c(u,v,w)].
 \]
This point also satisfies $[v,u]\cap [v,w]=[v,c(u,v,w)]$ and $[w,u]\cap [w,v]=[w,c(u,v,w)]$ (see Lemma 1.2 in \cite{Chiswell} or Lemma 3.20 in \cite{Evans08}). 

\begin{figure}[ht]
\centering
\begin{tikzpicture}[scale=2]

\draw (0,0)--(0,1);
\draw (0,0)--(1/2*1.73205080756,-1/2);
\draw (0,0)--(-1/2*1.73205080756,-1/2);

\draw[fill=black] (0,0) circle (0.05);
\draw[fill=black] (0,1) circle (0.05);
\draw[fill=black] (1/2*1.73205080756,-1/2) circle (0.05);
\draw[fill=black] (-1/2*1.73205080756,-1/2) circle (0.05);

\draw (0.5,0.15) node {$c(u,v,w)$};
\draw (-1/2*1.73205080756,-1/2-0.2) node {$u$};
\draw (1/2*1.73205080756,-1/2-0.2) node {$v$};
\draw (0.2,1) node {$w$};
\end{tikzpicture}
\caption{Tripod defining $c(u,v,w)$}\label{fig_uvwc}
\end{figure}
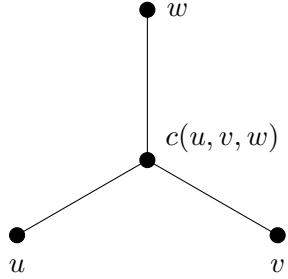

The set
\[
\mathcal{S}=\bigcup_{x \in X} \bigcup_{y \in B(x,\iota(x))}]x,y[  
\]
is called the skeleton of $X$.   If $\mathcal V$ be a countable dense subset of $\mathcal S$, one can also write
\[
\mathcal{S}=\bigcup_{x,y \in \mathcal V, d(x,y) < \iota (x)} ]x,y[.
\]
Notice that $\mathcal{S}$ is dense in $X$ but that $\mathcal S$ could be much smaller than $\mathcal S$ itself as the example below of the Vicsek set shows.

Given a local tree, there exists a unique measure $\nu$ on the $\sigma$-field generated by $\{ ]x,y[, x,y \in \mathcal V \}$ such that for every $x,y \in \mathcal S$ with $d(x,y) < \iota(x)$,
\[
\nu ( ]x,y [)=d(x,y).
\]
The measure $\nu$ will be called the length measure. In general, $\nu$ is not a Radon measure: For instance, in the Vicsek set, the $\nu$-measure of any ball of positive radius is $\infty$ since any set of positive diameter has infinite length.

\begin{example}[Vicsek set]\label{Vicsek set}

\begin{figure}[htb]\label{figure1}
 \noindent
 \makebox[\textwidth]{\includegraphics[height=0.22\textwidth]{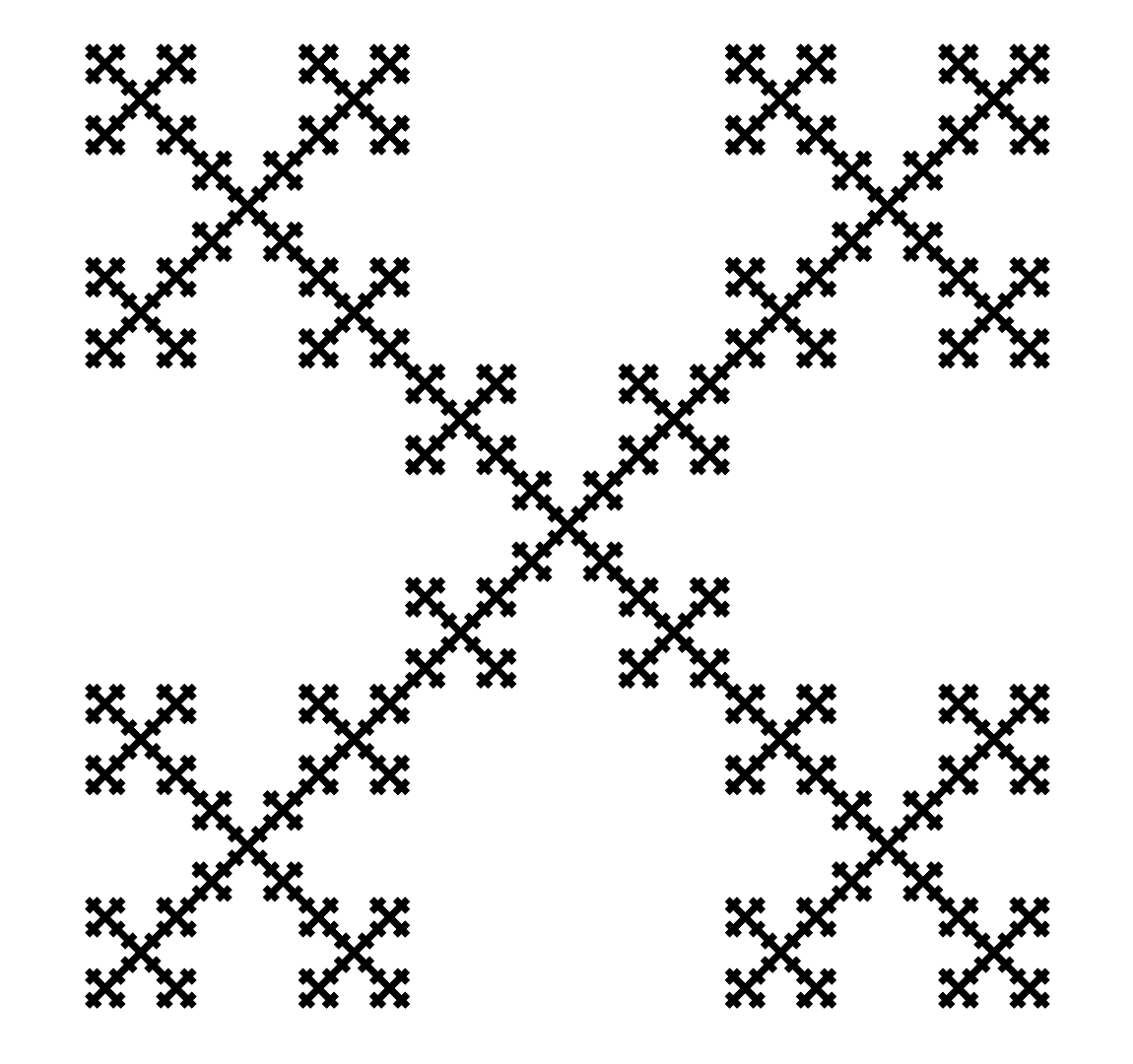}}
  	\caption{Vicsek set}
    \label{example Vicsek set}
\end{figure}

Let  $q_1=(0,0)$ be the center  of the unit square in the  plane  and let $q_2$, $q_3$ , $q_4$, and $q_5$ be its 4 corners arranged in such a way that $q_2$ and $q_4$ are diametrically opposed. Define $\psi_i(z)=\frac13(z-q_i)+q_i$ for $1\le i\le 5$. The Vicsek set $V$, see Figure \ref{example Vicsek set}, is the unique non-empty compact set in the plane such that 
\[
V=\bigcup_{i=1}^5 \psi_i(K).
\]
Equipped with the geodesic metric, $V$ is a metric tree and  its skeleton is given by
\[
\mathcal{S}=\bigcup_{k=1}^{\infty} \bigcup_{i_1,i_2,\cdots, i_k=1}^5  \psi_{i_1} \cdots \psi_{i_k} (D)
\]
where $V$ is the union of the two open diagonals of the square: $]q_2,q_4[ \cup ]q_3,q_5[$. Then, even though $\mathcal{S}$ is dense in $V$,  the metric tree $V$ has Hausdorff dimension $\frac{\ln 5}{\ln 3}$ whereas $\mathcal{S}$ supports the one-dimensional length measure. We refer to \cite{MR4494039} for details about the Vicsek set.
\end{example}

%
%\subsection{Examples of local trees}
%
%\begin{enumerate}
%\item Trees
%\item Cable systems
%\item Some fractal examples with two scales
%%
%\begin{figure}[ht]
%\begin{center}
%\begin{tikzpicture}
%
%\foreach \ddd in {40pt} {
%  \foreach \x in {0,1,2,3,4,5,6} {
%    \node at ({\x*\ddd*1.403}, 0) {\includegraphics[width=\ddd, angle=45]{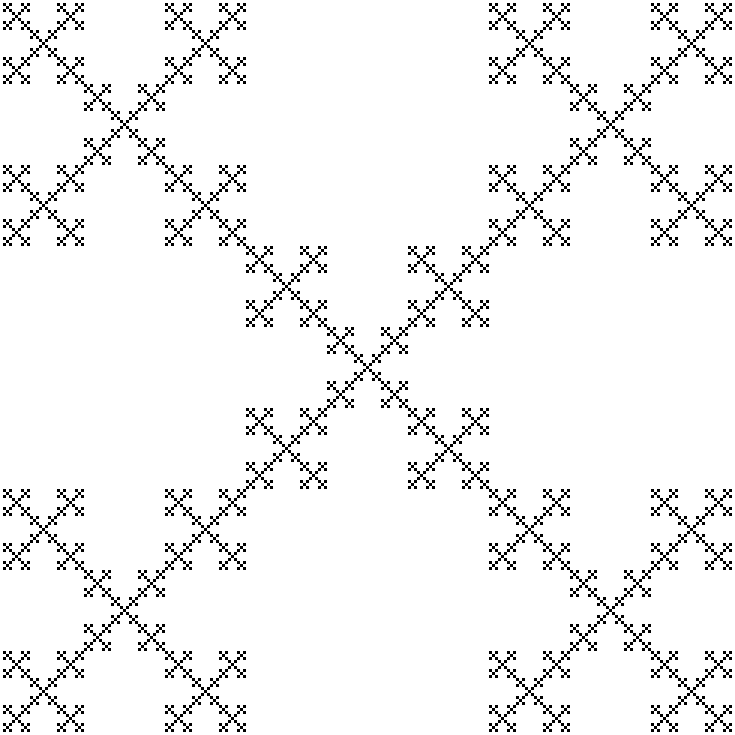}};
%  }
%}
%
%\end{tikzpicture}
%\end{center}
%\caption{Blow up the Vicsek set by the unit interval}\label{fig_VC001}
%\end{figure}

\begin{figure}[ht]
\begin{center}
\scalebox{0.5}{
\begin{tikzpicture}

\newcommand{\stepzero}[1]{
    \begin{scope}[shift={(#1)}]
     \foreach\ddd in {10pt}{
     
 \node at ($(#1)+(0*\ddd,0*\ddd)$) {\includegraphics[width=\ddd]{vc}};
 \node at ($(#1)+(0*\ddd,2*\ddd)$) {\includegraphics[width=\ddd]{vc}};
 \node at ($(#1)+(0*\ddd,4*\ddd)$) {\includegraphics[width=\ddd]{vc}};
 \node at ($(#1)+(1*\ddd,1*\ddd)$) {\includegraphics[width=\ddd]{vc}};
 \node at ($(#1)+(1*\ddd,3*\ddd)$) {\includegraphics[width=\ddd]{vc}};
 \node at ($(#1)+(2*\ddd,0*\ddd)$) {\includegraphics[width=\ddd]{vc}};
 \node at ($(#1)+(2*\ddd,2*\ddd)$) {\includegraphics[width=\ddd]{vc}};
 \node at ($(#1)+(2*\ddd,4*\ddd)$) {\includegraphics[width=\ddd]{vc}};
 \node at ($(#1)+(3*\ddd,1*\ddd)$) {\includegraphics[width=\ddd]{vc}};
 \node at ($(#1)+(3*\ddd,3*\ddd)$) {\includegraphics[width=\ddd]{vc}};
 \node at ($(#1)+(4*\ddd,0*\ddd)$) {\includegraphics[width=\ddd]{vc}};
 \node at ($(#1)+(4*\ddd,2*\ddd)$) {\includegraphics[width=\ddd]{vc}};
 \node at ($(#1)+(4*\ddd,4*\ddd)$) {\includegraphics[width=\ddd]{vc}};
        }
    \end{scope}
}

\foreach\eee in {25pt}
{
\stepzero{0,0};
\stepzero{0*\eee,2*\eee};
\stepzero{0*\eee,4*\eee};
\stepzero{1*\eee,1*\eee};
\stepzero{1*\eee,3*\eee};
\stepzero{2*\eee,0*\eee};
\stepzero{2*\eee,2*\eee};
\stepzero{2*\eee,4*\eee};
\stepzero{3*\eee,1*\eee};
\stepzero{3*\eee,3*\eee};
\stepzero{4*\eee,0*\eee};
\stepzero{4*\eee,2*\eee};
\stepzero{4*\eee,4*\eee};
}
\end{tikzpicture}
}
\end{center}
\caption{Example of a local tree: Blow up of  the Vicsek set by another fractal}\label{fig_VCFC}

\end{figure}

\begin{figure}[htb]
 \noindent
 \makebox[\textwidth]{\includegraphics[height=0.22\textwidth]{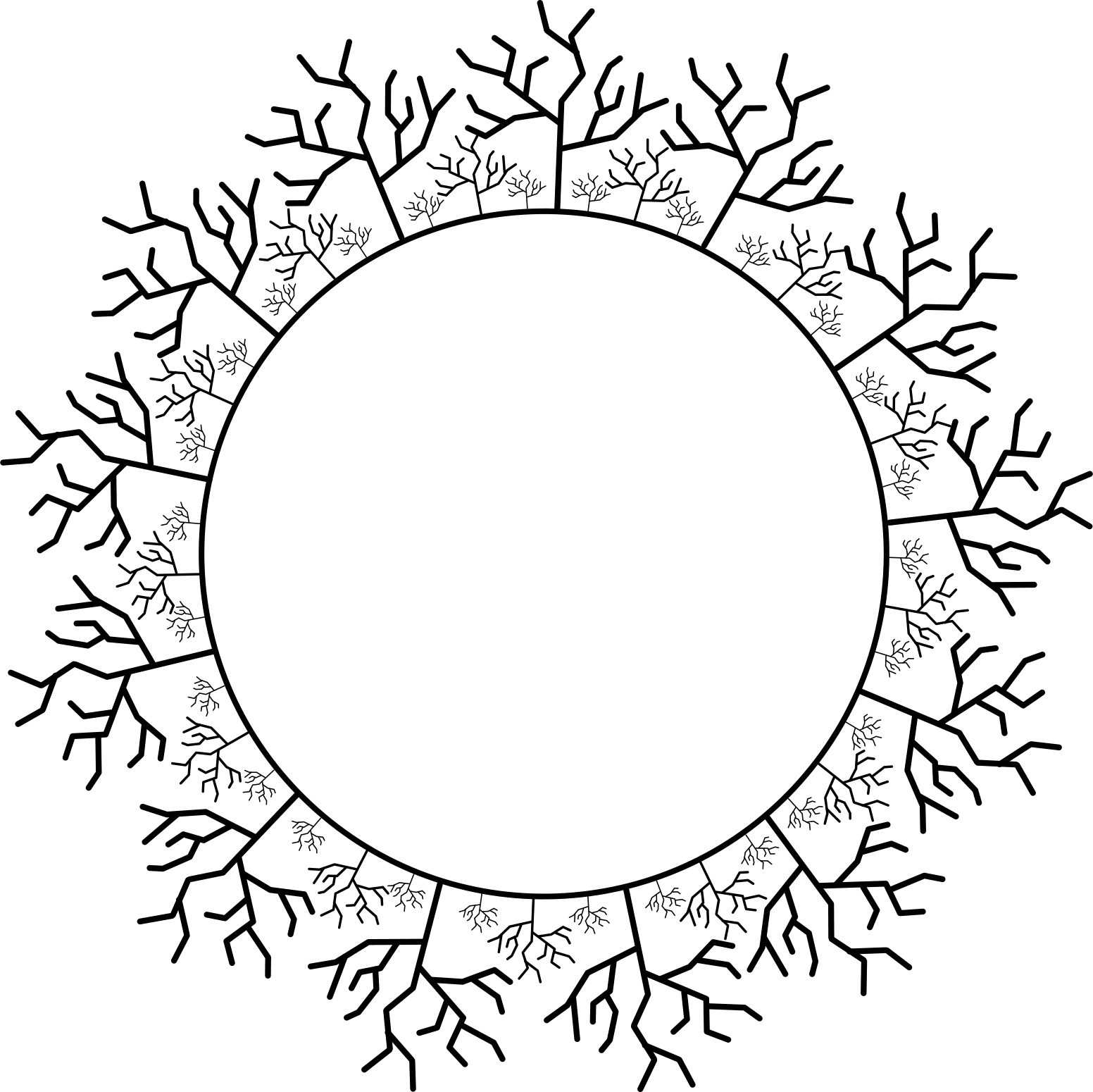}}
  	\caption{Example of a local tree}
    \label{berkovich}
\end{figure}

\begin{example}
The Figures \ref{fig_VCFC} and \ref{berkovich} illustrate further examples of local trees. The Figure \ref{fig_VCFC} is the blow up of the Vicsek of tree by another fractal, see \cite[Section 9]{gao2024holderregularityharmonicfunctions} and Figure \ref{berkovich} is the Berkovich space of a Tate elliptic curve.
\end{example}

%
%\item \color{red}{Laakso-type spaces satisfying condition (4.20) in \cite{Mur24}}
%\end{enumerate}

\subsection{Weak  gradients}
The notion of weak gradient is crucial in our analysis. In the framework of trees it appears in \cite{AEW}.

\begin{definition}
    Let $f \in C(X)$ and $x \in X$. We say that $f$ is absolutely continuous in $B(x ,\iota(x))$ if there exists a function $g \in L^1_{loc} (\mathcal{S}\cap B(x,\iota(x)),\nu)$ such that for every $u\in B(x,\iota(x))$ 
\begin{equation}\label{eq:weakGradient1}
f(u)-f(x)=\int_{]x,u[} g \, d\nu.
\end{equation}
If it  exists, the function $g$ is unique (up to a $\nu$ null set) and is called the weak gradient of $f$ in $B(x,\iota(x))$. We will denote $g=\partial_x f$.
\end{definition}

Note that if $f$ is absolutely continuous in $B(x ,\iota(x))$ then for every $u,v\in B(x,\iota(x))$ with $v \in [x , u]$
\begin{equation}\label{eq:weakGradient2}
f(v)-f(u)=\int_{]u,v[} \partial_x f \, d\nu.
\end{equation}
Indeed, one has
\[
f(v)-f(x)=\int_{]x,v[} \partial_x f \, d\nu, \quad f(u)-f(x)=\int_{]x,u[} \partial_x f \, d\nu,
\]
so that
\[
f(v)-f(u)=\int_{]x,v[} \partial_x f \, d\nu -\int_{]x,u[} \partial_x f \, d\nu=\int_{]u,v[} \partial_x f \, d\nu.
\]
\begin{definition}
Let $f \in C(X)$. We say that $f$ is absolutely continuous and write $f \in AC(X)$ if it is  absolutely continuous in $B(x ,\iota(x))$ for all $x \in X$.
\end{definition}

If $f \in AC(X)$ is absolutely continuous and $x,y \in X$ with $x\neq y$ then it is clear that $| \partial_x f |=|\partial_y f|$  holds $\nu$-a.e.  
 on $\mathcal{S} \cap B(x ,\iota(x))\cap B(y ,\iota(y))$. We will therefore simply use the notation $| \partial f|$. Similarly, if $f,g \in AC(X)$ then $\partial_x f \partial_x g$ does not depend on $x$ and we will denote $\partial f \partial g$.  We also  observe the following:
 
\begin{lemma}\label{absolute continuity}
Let $f \in AC(X)$ and $x \in X$, then for every $u,v \in B(x,\iota(x))$
\begin{align}\label{domination}
| f(v) -f(u) | \le \int_{]u,v[} |\partial f| \, d\nu.
\end{align}
\end{lemma}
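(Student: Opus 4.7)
The plan is to exploit the tripod structure available inside the real tree $B(x,\iota(x))$ and reduce the inequality to the defining equality \eqref{eq:weakGradient1} applied twice, with the two integrals sharing a common piece that cancels.

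Fix $x \in X$ and $u,v \in B(x,\iota(x))$. Since $B(x,\iota(x))$ is a real tree, the construction recalled after Definition~2.2 produces the unique tripod center $c = c(u,v,x) \in B(x,\iota(x))$ characterized by
\[
[x,u]\cap [x,v]=[x,c],\qquad [u,x]\cap [u,v]=[u,c],\qquad [v,x]\cap[v,u]=[v,c].
\]
In particular $]x,u[$ and $]x,v[$ agree as subsets of $\mathcal{S}$ on $]x,c[$ and are disjoint beyond $c$, while $]u,v[$ is the disjoint union $]u,c[\cup\{c\}\cup]c,v[$. Note $c\in B(x,\iota(x))$ because the isometric embedding giving $[x,u]$ keeps everything at distance at most $d(x,u)<\iota(x)$ from $x$.

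Next I would apply the absolute continuity of $f$ in $B(x,\iota(x))$ to both endpoints: by \eqref{eq:weakGradient1},
\[
f(u)-f(x)=\int_{]x,u[}\partial_x f\, d\nu,\qquad f(v)-f(x)=\int_{]x,v[}\partial_x f\, d\nu.
\]
Subtracting and using the decompositions above together with $\nu(\{c\})=0$ (singletons have zero length measure), the common contribution on $]x,c[$ cancels and one is left with
\[
f(v)-f(u)=\int_{]c,v[}\partial_x f\, d\nu-\int_{]c,u[}\partial_x f\, d\nu.
\]
The triangle inequality then yields
\[
|f(v)-f(u)|\le \int_{]c,v[}|\partial_x f|\, d\nu+\int_{]c,u[}|\partial_x f|\, d\nu=\int_{]u,v[}|\partial_x f|\, d\nu,
\]
and since $|\partial_x f|=|\partial f|$ $\nu$-a.e.\ on $\mathcal{S}\cap B(x,\iota(x))$ (as recorded in the excerpt right after Definition~2.9), the right-hand side equals $\int_{]u,v[}|\partial f|\, d\nu$, which is \eqref{domination}.

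The only genuine subtlety is the tripod bookkeeping — making sure the arcs $]x,u[$ and $]x,v[$ really do share exactly the sub-arc $]x,c[$ and that what remains on either side of $c$ is precisely $]u,v[$ (up to the single point $c$). This is where the uniqueness of geodesics and the loop-free property are used; once that combinatorial fact is in place, the integral identity and the final triangle inequality are immediate. Degenerate cases $c=x$, $c=u$, or $c=v$ (i.e.\ when $u$, $v$, or $x$ lies on the geodesic joining the other two) are handled by the same formula, since an empty arc contributes a zero integral.
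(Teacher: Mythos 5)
Your argument is correct and is essentially the paper's own proof: the paper likewise passes through the tripod center $c(u,v,x)$, writes $f(u)-f(c)=\int_{]c,u[}\partial_x f\,d\nu$ and $f(v)-f(c)=\int_{]c,v[}\partial_x f\,d\nu$ (which is exactly the consequence \eqref{eq:weakGradient2} you rederive by subtracting the two formulas based at $x$), and concludes by the triangle inequality and the decomposition $]u,v[\,=\,]u,c[\,\cup\{c\}\cup\,]c,v[$. No substantive difference.
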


\begin{proof}
From the previous remark one has,
\[
f(u)-f(c(u,v,x))=\int_{]c(u,v,x),u[} \partial_x f d\nu, \quad f(v)-f(c(u,v,x))=\int_{]c(u,v,x),v[} \partial_x f d\nu.
\]
Therefore,
\begin{align*}
|f(v)-f(u)|& =\left| \int_{]c(u,v,x),v[} \partial_x f d\nu- \int_{]c(u,v,x),u[} \partial_x f d\nu \right| \\
 &\le  \int_{]c(u,v,x),v[} | \partial f| d\nu+ \int_{]c(u,v,x),u[} |\partial_x f | d\nu \\
 &=\int_{]u,v[} |\partial f| \, d\nu.
\end{align*}
\end{proof}

The following lemma is easy to prove due to the corresponding properties that hold for absolutely continuous functions on the line.
 
\begin{lemma}[Chain and Leibniz rules]\label{Chain rule}
\

\begin{enumerate}
 \item Let $f \in AC(X)$ and let $\Phi$ be a $C^1$ function on $\mathbb R$. Then, $\Phi(f) \in AC(X)$ and $\nu$ a.e.
 \[
 |\partial \Phi(f)|=|\Phi'(f)| \, |\partial f|.
 \]
 \item Let $f,g \in AC(X)$. Then $fg\in AC(X)$  and $\nu$ a.e.
 \[
 |\partial (fg)| \le | f | \, |\partial g| +|g| \, | \partial f|.
 \]
\end{enumerate}
\end{lemma}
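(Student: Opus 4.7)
The plan is to reduce both parts to the classical one-dimensional theory of absolutely continuous functions on $\mathbb R$ by pulling back along unit-speed geodesics. Fix $x \in X$ and $u \in B(x, \iota(x))$, and let $\phi = \phi_{x,u}: [0, d(x,u)] \to X$ be the unique isometric embedding with $\phi(0) = x$, $\phi(d(x,u)) = u$. By the defining property of the length measure, $\nu$ restricted to $]x,u[$ coincides with the pushforward of Lebesgue measure under $\phi$, so that for any $h \in L^1_{loc}(\mathcal S \cap B(x, \iota(x)), \nu)$,
\[
\int_{]x, \phi(s)[} h \, d\nu = \int_0^s h(\phi(t)) \, dt, \qquad 0 \le s \le d(x,u).
\]
Applied to $f \in AC(X)$, the defining identity \eqref{eq:weakGradient1} shows that $F := f \circ \phi$ is absolutely continuous on $[0, d(x,u)]$ in the classical sense, with $F'(t) = (\partial_x f)(\phi(t))$ for a.e.\ $t$.

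For (i), since $\Phi \in C^1(\mathbb R)$ is locally Lipschitz, $\Phi \circ F$ is absolutely continuous on $[0, d(x,u)]$ and the classical chain rule gives $(\Phi \circ F)'(t) = \Phi'(F(t))\, F'(t)$ a.e. Integrating and transporting back along $\phi$ yields
\[
\Phi(f)(\phi(s)) - \Phi(f)(x) = \int_{]x, \phi(s)[} \Phi'(f)\, \partial_x f \, d\nu,
\]
which at $s = d(x,u)$ verifies \eqref{eq:weakGradient1} for $\Phi \circ f$ with candidate gradient $\Phi'(f)\, \partial_x f$. Since $\Phi' \circ f \in C(X)$ is locally bounded and $\partial_x f \in L^1_{loc}$, the product lies in $L^1_{loc}(\mathcal S \cap B(x, \iota(x)), \nu)$. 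By uniqueness of the weak gradient, $\partial_x \Phi(f) = \Phi'(f)\, \partial_x f$ $\nu$-a.e., and taking absolute values proves the identity.

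For (ii), run the same scheme with $F = f \circ \phi$ and $G = g \circ \phi$, both absolutely continuous on $[0, d(x,u)]$. The classical product rule $(FG)' = F'G + FG'$ a.e., together with the pullback identity, gives
\[
(fg)(\phi(s)) - (fg)(x) = \int_{]x, \phi(s)[} \bigl( g\, \partial_x f + f\, \partial_x g \bigr) d\nu.
\]
The integrand is in $L^1_{loc}$ since $f, g \in C(X)$ are locally bounded and $\partial_x f, \partial_x g \in L^1_{loc}$. Hence $fg \in AC(X)$ with $\partial_x(fg) = g\, \partial_x f + f\, \partial_x g$ $\nu$-a.e., and the triangle inequality produces the stated bound. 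The only nontrivial point is the identification of $\nu|_{]x,u[}$ with one-dimensional Lebesgue measure along $\phi$, which is built into the definition of the length measure; everything else is a direct transport of the classical calculus through the isometric parametrization.
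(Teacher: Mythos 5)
Your proof is correct and is exactly the argument the paper has in mind: the paper omits the proof, remarking only that the lemma ``is easy to prove due to the corresponding properties that hold for absolutely continuous functions on the line,'' and your pullback along unit-speed geodesics (identifying $\nu|_{]x,u[}$ with Lebesgue measure via the isometric parametrization) is the precise way to carry out that reduction. The verification of the defining identity \eqref{eq:weakGradient1} for every $u\in B(x,\iota(x))$ with a single candidate gradient, and the local integrability checks, are all in order.
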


\begin{definition}
Let $1 \le p\le \infty$. For $f\in C(X)$, we say that $f\in AC^{p}(X)$ if it admits a weak gradient  $\partial f\in L^p(\mathcal S, \nu)$. 
\end{definition}

Functions in $AC^{p}(X)$ satisfy a local Morrey estimate.

\begin{theorem}[Local Morrey type estimate]\label{thm:Morrey}
 Let $1 \le p \le \infty$ and $x_0 \in X$. 
 \begin{enumerate}
\item If $p \ge 1$, then for every $f \in AC^{p}(X)$ and $x,y \in B(x_0,\iota(x_0))$
\[
|f(x)-f(y)|^p \le d(x,y)^{p-1} \int_{]x,y[} |\partial f |^p d\nu\le d(x,y)^{p-1} \|\partial f\|^p_{L^p(\CS,\nu)}.
\]
\item If $p =+\infty$, then for every $f \in AC^{\infty}(X)$ and $x,y \in B(x_0,\iota(x_0))$
\[
|f(x)-f(y)| \le d(x,y)  \|\partial f\|_{L^\infty(\CS,\nu)}.
\]

\end{enumerate}
Therefore for $p>1$, any $f \in AC^{p}(X)$ is locally $1-\frac{1}{p}$ H\"older continuous.
\end{theorem}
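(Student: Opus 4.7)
The statement is a direct consequence of the pointwise domination established in Lemma~\ref{absolute continuity} (here Lemma 2.10) combined with Hölder's inequality on the length measure $\nu$. The plan is to treat $1 \le p < \infty$ and $p = \infty$ separately, then derive the Hölder continuity as a corollary.

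\textbf{Step 1 (Case $1 \le p < \infty$).} Fix $x, y \in B(x_0, \iota(x_0))$. By Lemma~\ref{absolute continuity},
\[
|f(x) - f(y)| \le \int_{]x,y[} |\partial f| \, d\nu.
\]
When $p = 1$ there is nothing more to do, since $d(x,y)^{p-1} = 1$. When $p > 1$, I apply Hölder's inequality with conjugate exponents $p$ and $p/(p-1)$ on the measure space $(]x,y[, \nu)$:
\[
\int_{]x,y[} |\partial f| \, d\nu \le \nu(]x,y[)^{1 - 1/p} \left( \int_{]x,y[} |\partial f|^p \, d\nu \right)^{1/p}.
\]
Since, by the definition of the length measure, $\nu(]x,y[) = d(x,y)$, raising to the $p$-th power yields
\[
|f(x) - f(y)|^p \le d(x,y)^{p-1} \int_{]x,y[} |\partial f|^p \, d\nu,
\]
and the second inequality follows by enlarging the domain of integration from $]x,y[ \subset \CS$ to all of $\CS$.

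\textbf{Step 2 (Case $p = \infty$).} Again by Lemma~\ref{absolute continuity},
\[
|f(x) - f(y)| \le \int_{]x,y[} |\partial f| \, d\nu \le \|\partial f\|_{L^\infty(\CS, \nu)} \, \nu(]x,y[) = d(x,y) \, \|\partial f\|_{L^\infty(\CS, \nu)}.
\]

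\textbf{Step 3 (Hölder continuity for $p > 1$).} From Step 1,
\[
|f(x) - f(y)| \le d(x,y)^{1 - 1/p} \|\partial f\|_{L^p(\CS, \nu)} \qquad \text{for all } x, y \in B(x_0, \iota(x_0)),
\]
which is precisely the $(1-1/p)$-Hölder estimate on the ball $B(x_0, \iota(x_0))$. As $x_0$ was arbitrary, this gives local $(1-1/p)$-Hölder continuity on $X$.

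\textbf{Expected obstacle.} There is essentially no obstacle: the genuine content (the domination by the line integral of the weak gradient) is already built into Lemma~\ref{absolute continuity}, and the only analytic ingredient added here is Hölder's inequality applied to the length measure together with the identity $\nu(]x,y[) = d(x,y)$. The only minor point that needs attention is ensuring that $]x,y[$ is well-defined inside $B(x_0, \iota(x_0))$, which is guaranteed because $x, y \in B(x_0, \iota(x_0))$ and this ball is isometric to a real tree, so the geodesic segment $]x,y[$ is canonically defined and contained in $\CS$.
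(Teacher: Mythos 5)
Your proof is correct and follows exactly the paper's argument: the paper likewise derives the estimate from Lemma~\ref{absolute continuity} followed by H\"older's inequality on the length measure, using $\nu(]x,y[) = d(x,y)$. Your write-up simply makes explicit the case distinctions and the H\"older computation that the paper leaves to the reader.
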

\begin{proof}

We note that it  follows from Lemma \ref{absolute continuity} that if $f\in AC^{p}(X)$, then for every $x_0 \in X$, $x,y \in B(x_0,\iota(x_0))$
\[
| f(x) -f(y) | \le \int_{]x,y[} | \partial f | d\nu,
\]
and the conclusion follows from H\"older's inequality.
\end{proof}

\subsection{Sobolev, BV spaces and $p$-energy forms}

We now consider a Radon measure $m$ on $(X,d)$ with full support. Typically, in many situations $m$ and $\nu$ are singular with respect to each other and we even have $m(\mathcal{S})=0$; see the Vicsek set Example \ref{Vicsek set} for which we take for $m$ the Hausdorff measure.

\begin{definition}
Let $1 \le p\le \infty$.  For $f \in L^p(X,m)$ we say that $f\in W^{1,p}(X,m)$ if there exists a $m$-representative $\tilde{f}$ of $f$ such that $\tilde{f} \in  AC^{p}(X)$. In that case, we will denote $|\partial f| :=|\partial \tilde{f}|$. The norm on $W^{1,p}(X,m)$ is defined as
\[
\| f \|_{W^{1,p}(X,m)} =\| f \|_{L^p(X,m)}+\|\partial f\|_{L^p(\CS,\nu)}.
\]
\end{definition}

Since for  $f \in W^{1,p}(X,m)$ there exists a (necessarily unique) $m$-representative which is continuous, we will often simply work with this representative without further notice so that $f(x)$ is well defined for a given $x \in X$.  We first collect some basic properties that will be important in the sequel.

\begin{proposition}\label{closedness p-energy}
Let $1 \le p \le \infty$. The space  $(W^{1,p}(X,m),\| \cdot \|_{W^{1,p}(X,m)})$ is a Banach space.
\end{proposition}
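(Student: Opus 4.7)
The plan is the standard one: take a Cauchy sequence, use completeness of $L^p(X,m)$ and $L^p(\CS,\nu)$ to get candidate limits, and then do the real work, which is producing a \emph{continuous} representative of the $L^p$-limit that lies in $AC^p(X)$ and has the expected weak gradient. Let $(f_n)\subset W^{1,p}(X,m)$ be Cauchy and let $\widetilde f_n\in AC^p(X)$ be the (unique, because $m$ has full support) continuous $m$-representative of $f_n$. By definition of the norm, $(\widetilde f_n)$ is Cauchy in $L^p(X,m)$ and $(\partial \widetilde f_n)$ is Cauchy in $L^p(\CS,\nu)$, so by completeness there exist $f\in L^p(X,m)$ and $g\in L^p(\CS,\nu)$ with $\widetilde f_n\to f$ and $\partial\widetilde f_n\to g$ in the respective $L^p$-norms. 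After passing to a subsequence I may assume $\widetilde f_n\to f$ $m$-a.e.; call $E$ the set of convergence, which has full $m$-measure and is therefore dense in $X$.

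Now I build a continuous representative $\widetilde f$ of $f$ locally. Fix $x_0\in X$ and choose any $y_0\in E\cap B(x_0,\iota(x_0)/2)$, which exists because $m$ has full support. Applying Theorem \ref{thm:Morrey} to the difference $\widetilde f_n-\widetilde f_m$ on the ball $B(x_0,\iota(x_0))$ gives, for every $x$ in that ball,
\[
\bigl|(\widetilde f_n-\widetilde f_m)(x)-(\widetilde f_n-\widetilde f_m)(y_0)\bigr|^p\le d(x,y_0)^{p-1}\,\|\partial\widetilde f_n-\partial\widetilde f_m\|_{L^p(\CS,\nu)}^p
\]
when $p<\infty$, with the analogous statement for $p=\infty$. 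In every case the right-hand side tends to $0$ uniformly in $x$ as $n,m\to\infty$, and since $\widetilde f_n(y_0)\to f(y_0)$ is Cauchy in $\mathbb R$, the sequence $\widetilde f_n$ is uniformly Cauchy on $B(x_0,\iota(x_0))$. The uniform limit defines a continuous function on this ball that agrees with $f$ $m$-a.e.; by uniqueness of $m$-representatives (again using full support of $m$) these local limits agree on overlaps and glue to a continuous $\widetilde f:X\to\mathbb R$ with $\widetilde f_n\to\widetilde f$ locally uniformly and $\widetilde f=f$ $m$-a.e.

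It remains to check $\widetilde f\in AC^p(X)$ with $\partial\widetilde f=g$. For any $x\in X$ and $u\in B(x,\iota(x))$ we have, for each $n$,
\[
\widetilde f_n(u)-\widetilde f_n(x)=\int_{]x,u[}\partial\widetilde f_n\,d\nu.
\]
The left-hand side converges to $\widetilde f(u)-\widetilde f(x)$ by the locally uniform convergence. For the right-hand side, note that $\nu(]x,u[)=d(x,u)<\infty$, so Hölder's inequality (or direct $L^1$-convergence when $p=1$, trivial $L^\infty$ bound when $p=\infty$) gives $\partial\widetilde f_n\to g$ in $L^1(]x,u[,\nu)$ and the integral on the right tends to $\int_{]x,u[}g\,d\nu$. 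Thus $\widetilde f$ is absolutely continuous on every such ball with weak gradient equal (up to a $\nu$-null set) to $g\in L^p(\CS,\nu)$, so $\widetilde f\in AC^p(X)$ and $f\in W^{1,p}(X,m)$ with $|\partial f|=|g|$. Finally $\|f_n-f\|_{W^{1,p}(X,m)}=\|\widetilde f_n-f\|_{L^p(X,m)}+\|\partial\widetilde f_n-g\|_{L^p(\CS,\nu)}\to 0$.

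The main obstacle is the construction of $\widetilde f$ itself. The point is that even when $p=1$ the Morrey estimate of Theorem \ref{thm:Morrey} provides, not pointwise Hölder control, but a uniform oscillation bound by $\|\partial\widetilde f_n-\partial\widetilde f_m\|_{L^1}$, which is precisely what is needed to run the uniform Cauchy argument on each ball $B(x_0,\iota(x_0))$; combined with density of $E$ (which comes from $m$ having full support) this gives a pointwise anchor $y_0$ to pin the limit down. The rest is bookkeeping.
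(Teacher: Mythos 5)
Your proof is correct and follows the same overall skeleton as the paper's (Cauchy sequence, $L^p$ limits, construction of a continuous representative, passage to the limit in the defining identity of the weak gradient), but the mechanism you use to produce the continuous representative is different. The paper passes to an $m$-a.e.\ convergent subsequence, obtains $f(y)-f(x)=\int_{]x,y[}g\,d\nu$ for $m$-a.e.\ pairs, and then \emph{defines} the continuous representative through the continuity of $y\mapsto\int_{]x,y[}g\,d\nu$; you instead apply the Morrey estimate of Theorem \ref{thm:Morrey} to the differences $\widetilde f_n-\widetilde f_m$ to show the continuous representatives are uniformly Cauchy on each ball $B(x_0,\iota(x_0))$, anchored at a point of the a.e.-convergence set. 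Your route buys the extra conclusion that $\widetilde f_n\to\widetilde f$ locally uniformly, and it handles $p=1$ transparently (the Morrey bound degenerates to a pure oscillation bound by $\|\partial\widetilde f_n-\partial\widetilde f_m\|_{L^1}$, which is all you need); the paper's route is shorter because the continuity of the limit is read off directly from the formula involving $g$. Two small points of care: (a) after passing to a subsequence for a.e.\ convergence, the Cauchyness of the full sequence at the anchor $y_0$ deserves a line (e.g.\ deduce it from the $L^p$-Cauchy property on the ball together with the oscillation bound, or just work with the subsequence and recall that a Cauchy sequence with a convergent subsequence converges); (b) the assertion that $\partial\widetilde f_n\to g$ for a single global $g\in L^p(\CS,\nu)$ glosses over the fact that the signed gradient $\partial_x f$ is only defined relative to a base point, so the limit is really a coherent family of local signed gradients with a globally defined modulus --- for the global norm convergence $\|\partial(f_n-f)\|_{L^p(\CS,\nu)}\to 0$ the clean argument is Fatou's lemma applied to $|\partial f_n-\partial f_m|$, exactly as the paper does. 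Neither point is a genuine gap.
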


\begin{proof}
Let $f_n$ be a Cauchy sequence in $W^{1,p}(X,m)$.  Since  $f_n$ is then also a Cauchy sequence in $L^p(X,m)$ we have   $f_n \to f$ in $L^p(X,m)$ where $f \in L^p(X,m)$. Let now $x_0 \in X$. Since $\partial f_n $ is a Cauchy sequence in $L^p( \mathcal{S} \cap B(x_0,\iota(x_0)),\nu)$ we  have that $\partial f_n $ converges to some $g$ in $L^p( \mathcal{S} \cap B(x_0,\iota(x_0)),\nu)$. We can extract from $f_n$ a subsequence $f_{n_k}$ that converges $m$-a.e. to  $f$. One has for every $x,y \in B(x_0,\iota(x_0))$  with $x \in [x_0,y]$,
\[
f_n(y)-f_n(x)= \int_{]x,y[} \partial f_n d\nu.
\]
Taking limit along the subsequence yields that for $m$-almost every  $x,y \in B(x_0,\iota(x_0))$ with $x \in [x_0,y]$,
\[
f(y)-f(x)=\int_{]x,y[} g d\nu
\]
For any $x \in X$, the function $y \to \int_{]x,y[} g d\nu$ is continuous. Therefore, $f$ admits a continuous $m$-representative $\tilde{f}$ that satisfies for every  $x,y \in B(x_0,\iota(x_0))$ with $x \in [x_0,y]$,
\[
\tilde{f}(y)-\tilde{f}(x)=\int_{]x,y[} g d\nu.
\]
Therefore $\tilde{f} \in AC^{p}(X,m)$ and $g=\partial \tilde{f}$. If $p<\infty$, then from Fatou's lemma  we have $g \in L^p(\mathcal{S},\nu)$, which implies $f \in W^{1,p}(X,m)$. If $p=\infty$, then we have $\| g \|_{L^\infty(\mathcal{S},\nu)} \le \sup_n  \| \partial f_n \|_{L^\infty(\mathcal{S},\nu)}$ which implies $f \in W^{1,\infty}(X,m)$.

 It remains to prove that $f_n \to f$ in $W^{1,p}(X,m)$. We assume $p<\infty$, the case $p=\infty$ actually being simpler. Let $\varepsilon >0$. Since $f_n$ is a Cauchy sequence in $W^{1,p}(X,m)$, one has for  $n,m \ge N$, $\int_{\mathcal{S}} | \partial f_n -\partial f_m |^p d\nu \le \varepsilon$, where $N$ is large enough. Letting $n \to \infty$ along a subsequence yields by Fatou's lemma $\int_{\mathcal{S}} | \partial f -\partial f_m |^p d\nu \le \varepsilon$ and the conclusion follows.
\end{proof}

\begin{proposition}
    Let $1<p<+\infty$. The Banach space $(W^{1,p}(X,m),\| \cdot \|_{W^{1,p}(X,m)})$ is reflexive and separable. 

\end{proposition}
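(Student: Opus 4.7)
The strategy is the classical embedding into a product of $L^p$-spaces. Consider the linear map
\[
T: W^{1,p}(X,m) \longrightarrow L^p(X,m) \oplus L^p(\mathcal{S}, \nu), \qquad T(f) = (f, \partial f),
\]
where the direct sum is endowed with the norm $\|(u,v)\| = \|u\|_{L^p(X,m)} + \|v\|_{L^p(\mathcal{S},\nu)}$. By the very definition of the $W^{1,p}$-norm, $T$ is an isometry onto its image.

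Next, I would verify that the ambient space $L^p(X,m) \oplus L^p(\mathcal{S}, \nu)$ is reflexive and separable. Reflexivity of $L^p$ for $1<p<\infty$ over any measure space is standard, and finite direct sums preserve reflexivity. For separability, $L^p(X,m)$ is separable since $X$ is a separable metric space and $m$ is Radon, so the Borel $\sigma$-algebra is countably generated and $m$ is $\sigma$-finite (using that $X$ is locally compact and separable, hence $\sigma$-compact). For $L^p(\mathcal{S},\nu)$, one uses the countable dense set $\mathcal{V} \subset \mathcal{S}$ to write
\[
\mathcal{S} = \bigcup_{\substack{x,y \in \mathcal{V} \\ d(x,y) < \iota(x)}} ]x,y[,
\]
a countable union of sets of finite $\nu$-measure; together with the fact that the underlying $\sigma$-algebra is generated by this countable family of intervals, this gives separability of $L^p(\mathcal{S},\nu)$.

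Then I would show that $T\bigl(W^{1,p}(X,m)\bigr)$ is closed in $L^p(X,m) \oplus L^p(\mathcal{S}, \nu)$. This is exactly what Proposition \ref{closedness p-energy} supplies: if $T(f_n) = (f_n, \partial f_n)$ converges to $(f,g)$ in the product, then in particular $(f_n)$ is Cauchy in $W^{1,p}(X,m)$, so by completeness $f_n \to \tilde f$ in $W^{1,p}(X,m)$ for some $\tilde f$; uniqueness of limits forces $\tilde f = f$ and $\partial \tilde f = g$, hence $(f,g) \in T(W^{1,p}(X,m))$.

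Finally, since closed subspaces of reflexive Banach spaces are reflexive, and subsets of separable metric spaces are separable, the image $T(W^{1,p}(X,m))$ inherits both properties, and via the isometric isomorphism $T$ so does $W^{1,p}(X,m)$ itself. The only nontrivial point where something specific to our setting intervenes is the $\sigma$-finiteness of $\nu$ on $\mathcal{S}$: despite $\nu$ generally failing to be Radon (as in the Vicsek example), the countable covering by open geodesic segments $]x,y[$ with $x,y \in \mathcal V$ still yields $\sigma$-finiteness, which is all that is needed for reflexivity and separability of $L^p(\mathcal{S},\nu)$.
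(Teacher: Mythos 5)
Your proof is correct, but it follows a genuinely different route from the paper's. You use the classical isometric embedding $f \mapsto (f,\partial f)$ of $W^{1,p}(X,m)$ onto a closed subspace of $L^p(X,m)\oplus L^p(\mathcal{S},\nu)$, with closedness supplied by the completeness result (Proposition \ref{closedness p-energy}), and then let both reflexivity and separability descend from the product. The paper instead proves reflexivity by establishing uniform convexity of the equivalent norm $\bigl(\|f\|_{L^p}^p+\|\partial f\|_{L^p}^p\bigr)^{1/p}$ via the Clarkson inequalities, and obtains separability from an external result (a reflexive Banach space admitting a bounded injection into a separable Banach space is separable), applied to the inclusion $W^{1,p}(X,m)\hookrightarrow L^p(X,m)$. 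Your approach is more elementary and treats both properties at once, but it requires separability of $L^p(\mathcal{S},\nu)$, which the paper's route deliberately avoids; you handle this correctly by noting that $\nu$, though generally not Radon, is $\sigma$-finite on the countably generated $\sigma$-algebra of $\mathcal{S}$ thanks to the countable covering by segments $]x,y[$ with $x,y\in\mathcal{V}$ of finite length. What the paper's route buys in exchange is the explicit uniform convexity of $W^{1,p}(X,m)$, which is a strictly stronger geometric statement than reflexivity and is of independent use. Both arguments rest on the same completeness proposition, and either is acceptable here.
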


\begin{proof}
To prove reflexivity, we prove that $(W^{1,p}(X,m), ||| \cdot ||| _{W^{1,p}(X,m)})$ is uniformly convex, where $||| \cdot |||_{W^{1,p}(X,m)}$ is the equivalent norm defined by
\[
||| f |||_{W^{1,p}(X,m)}=\left( \| f \|^p_{L^p(X,m)}+\|\partial f\|^p_{L^p(\CS,\nu)}\right)^{1/p}
\]

Actually, we easily get the following Clarkson inequalities which imply uniform convexity. Let $f,g\in W^{1,p}(X,m)$, $1<p<\infty$, and $q$ be the H\"older conjugate of $p$. If $2\le p<\infty$, then
\begin{equation}\label{eq:CTIge2}
||| (f+g)/2|||_{W^{1,p}(X,m)}^p + ||| (f-g)/2 |||_{W^{1,p}(X,m)}^p
\le ||| f |||_{W^{1,p}(X,m)}^p/2 +||| g |||_{W^{1,p}(X,m)}^p/2.
\end{equation}
If $1<p\le 2$, then
\begin{equation}\label{eq:CTIle2}
||| (f+g)/2 ||| _{W^{1,p}(X,m)}^{q} +  ||| (f-g)/2 |||_{W^{1,p}(X,m)}^{q}
\le \brak{ ||| f ||| _{W^{1,p}(X,m)}^p/2 + ||| g |||_{W^{1,p}(X,m)}^p/2}^{q-1}.
\end{equation}
It remains to prove separability. The identity map $\iota: (W^{1,p}(X,m), \| \cdot \|_{W^{1,p}(X,m)} \to (L^{p}(X,m),  \| \cdot \|_{L^{p}(X,m)})$ is a linear and bounded injective map. Since the space $(W^{1,p}(X,m), \| \cdot \|_{W^{1,p}(X,m)} $ is reflexive and $L^{p}(X,m)$ is separable because $X$ is, it now follows from Proposition 4.1 in \cite{alvarado2023simple} that  $(W^{1,p}(X,m), \| \cdot \|_{W^{1,p}(X,m)}) $ is separable.
\end{proof}

\begin{corollary}\label{lower semi-continuity p-energy}
   Let $p>1$. The $p$-energy form 
\[
\mathcal{E}_p(f):=\int_{\mathcal{S}} | \partial f |^p d\nu
\]
 with domain $W^{1,p}(X,m)$ is lower semi-continuous in $L^p(X,m)$.
\end{corollary}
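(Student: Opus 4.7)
The plan is to use the reflexivity of $W^{1,p}(X,m)$ (for $p>1$) established in the previous proposition, together with the weak lower semi-continuity of the norm in the uniformly convex space $L^p(\mathcal{S},\nu)$.

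Assume $f_n \to f$ in $L^p(X,m)$ with $f_n \in W^{1,p}(X,m)$. If $\liminf_n \mathcal{E}_p(f_n) = +\infty$ there is nothing to prove, so I can pass to a subsequence (without relabelling) along which $\mathcal{E}_p(f_n)$ converges to a finite limit $L = \liminf_n \mathcal{E}_p(f_n)$. Since $f_n$ converges in $L^p(X,m)$, it is bounded in $L^p(X,m)$, and since $\|\partial f_n\|_{L^p(\mathcal{S},\nu)}^p = \mathcal{E}_p(f_n)$ is bounded, the sequence $f_n$ is bounded in $W^{1,p}(X,m)$.

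By reflexivity, some further subsequence $f_{n_k}$ converges weakly in $W^{1,p}(X,m)$ to some $g \in W^{1,p}(X,m)$. The inclusion $W^{1,p}(X,m)\hookrightarrow L^p(X,m)$ is a bounded linear map, hence weakly continuous, so $f_{n_k}\rightharpoonup g$ in $L^p(X,m)$. But $f_{n_k} \to f$ strongly in $L^p(X,m)$, so $g = f$ as elements of $L^p(X,m)$. In particular $f \in W^{1,p}(X,m)$ (identifying $f$ with its continuous representative). Similarly, the linear map $W^{1,p}(X,m)\to L^p(\mathcal{S},\nu)$, $h\mapsto \partial h$, is bounded, hence weakly continuous, so $\partial f_{n_k}\rightharpoonup \partial f$ in $L^p(\mathcal{S},\nu)$.

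Since $p>1$, the norm on $L^p(\mathcal{S},\nu)$ is weakly lower semi-continuous, which gives
\[
\mathcal{E}_p(f) = \|\partial f\|_{L^p(\mathcal{S},\nu)}^p \le \liminf_{k\to\infty}\|\partial f_{n_k}\|_{L^p(\mathcal{S},\nu)}^p = L = \liminf_{n\to\infty}\mathcal{E}_p(f_n),
\]
which is the desired inequality. The only slightly delicate point is identifying the weak limit $g$ with $f$ as an element of $W^{1,p}(X,m)$ (i.e.\ checking that $f$ really admits a weak gradient and that this gradient is the weak $L^p$-limit of the $\partial f_{n_k}$); this is handled cleanly by weak continuity of the two projections $h\mapsto h$ and $h\mapsto \partial h$, together with the fact that an element of $W^{1,p}(X,m)$ is determined by its $L^p$-equivalence class.
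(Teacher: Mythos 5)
Your proof is correct and follows essentially the same route as the paper: extract a weakly convergent subsequence in the reflexive space $W^{1,p}(X,m)$, identify the weak limit with $f$ via the strong $L^p(X,m)$ convergence, and conclude by a lower semicontinuity argument. The paper phrases the last step via Mazur's lemma and convexity of $\mathcal{E}_p$ rather than weak continuity of $h\mapsto\partial h$ plus weak lower semicontinuity of the $L^p(\mathcal{S},\nu)$-norm, but these are the same idea (and the paper's formulation sidesteps the minor point that the signed gradient $\partial h$, as opposed to $|\partial h|$, is only defined up to a local choice of orientation of the skeleton, so your ``bounded linear map $h\mapsto\partial h$'' should be understood locally or after fixing such an orientation).
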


\begin{proof}
Let $f_n$ be a sequence in $W^{1,p}(X,m)$ such that $f_n \to f$ in $L^p(X,m)$ and $\mathcal{E}_p(f_n)$ is convergent. Since the Banach space $(W^{1,p}(X,m),\| \cdot \|_{W^{1,p}(X,m)})$ is reflexive one can extract a subsequence such that $f_{n_k}$ is weakly convergent to some $g \in W^{1,p}(X,m)$. Mazur's lemma implies that a convex combination of the $f_{n_k}$'s converges in $W^{1,p}(X,m)$ to $g$. Since $f_{n_k}$ converges in $L^p(X,m)$ to $f$ one must have $g=f$. Therefore, $f \in W^{1,p}(X,m)$. Moreover, by convexity of the functional $\mathcal{E}_p$ one has
\[
\mathcal{E}_p(f) \le \lim_{n \to +\infty} \mathcal{E}_p(f_n).
\]
\end{proof}

The $1$-energy is not lower semicontinuous in general.  To get a satisfying theory one can use $L^1$-relaxation of the $1$-energy as in \cite{Mir03}. This leads to the notion of bounded variation function (BV).

\begin{definition}
The space of bounded variation functions is defined by $L^1$ relaxation:
\[
BV(X,m) =\left\{ f \in L^1(X,m) \mid \| \partial f \|(X)<+\infty \right\}
\]
where
\[
 \| \partial f \|(X)=\inf_{f_n} \liminf_{n \to +\infty} \int_\mathcal{S} | \partial f_n | d\nu  <+\infty
\]
and the $\inf$ is taken over the set of functions $f_n \in W^{1,1}(X,m)$ such that $f_n \to f$ in $L^1(X,m)$. In other words $\| \partial f \|(X)$ is the lower semi-continuous envelope in $L^1(X,m)$ of the functional $\int_\mathcal{S} |\partial f| d\nu$. The norm on $BV(X)$ is defined by
\[
\| f \|_{BV(X)} =\| f \|_{L^1(X,m)} +  \| \partial f \|(X).
\]
\end{definition}

\begin{lemma}
We have $W^{1,1}(X,m) \subset BV(X,m)$ and for $f \in W^{1,1}(X,m)$,
\[
\| \partial f  \|(X)=\int_\mathcal{S} | \partial f | d\nu.
\]
\end{lemma}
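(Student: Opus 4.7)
The inclusion $W^{1,1}(X,m) \subset BV(X,m)$ and the upper bound $\|\partial f\|(X) \le \int_{\mathcal S}|\partial f|\,d\nu$ are immediate from the definition: for $f \in W^{1,1}(X,m)$, the constant sequence $f_n \equiv f$ is admissible in the defining infimum, so that
\[
\|\partial f\|(X) \le \liminf_n \int_{\mathcal S}|\partial f_n|\,d\nu = \int_{\mathcal S}|\partial f|\,d\nu < \infty,
\]
and in particular $f \in BV(X,m)$.

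The substantive content is the reverse inequality, which amounts to an $L^1$-lower semicontinuity statement: if $(f_n) \subset W^{1,1}(X,m)$ with $f_n \to f$ in $L^1(X,m)$ and $f \in W^{1,1}(X,m)$, then
\[
\int_{\mathcal S}|\partial f|\,d\nu \le \liminf_n \int_{\mathcal S}|\partial f_n|\,d\nu.
\]
I would argue by localization. Fix $x_0 \in X$ and $r < \iota(x_0)$, and set $B := B(x_0,r)$. Then $\bar B$ is compact and $\nu$ restricted to $\mathcal S \cap B$ is $\sigma$-finite. Choosing $x_0$ as a root canonically fixes a sign for the weak gradient on $B$, so one may form the signed Borel measures $\mu_n := (\partial_{x_0} f_n)\,\mathbf{1}_{\mathcal S \cap B}\,d\nu$ on $\bar B$, whose total variations are uniformly bounded by the assumed liminf. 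By Banach--Alaoglu on the space of finite signed Borel measures on the compact metric space $\bar B$ (predual $C(\bar B)$, which is separable so that weak-$*$ compactness is sequential), a subsequence satisfies $\mu_n \rightharpoonup \mu$ weakly-$*$ for some finite signed measure $\mu$ on $\bar B$.

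The crux is the identification $\mu = (\partial_{x_0} f)\,\mathbf{1}_{\mathcal S \cap B}\,d\nu$. Using the fundamental identity $f_n(y) - f_n(x_0) = \mu_n(]x_0,y[)$, the $L^1(X,m)$-convergence of $(f_n)$, continuity of the $W^{1,1}$-representatives, and the Morrey estimate of Theorem~\ref{thm:Morrey}, one verifies $\mu(]x_0,y[) = f(y) - f(x_0)$ for $y$ in a dense subset of $B$, which by countable additivity pins $\mu$ down on $\mathcal S \cap B$. Lower semicontinuity of total variation under weak-$*$ convergence then yields the local inequality
\[
\int_{\mathcal S \cap B}|\partial f|\,d\nu = |\mu|(\bar B) \le \liminf_n |\mu_n|(\bar B) = \liminf_n \int_{\mathcal S \cap B}|\partial f_n|\,d\nu.
\]
The global inequality is obtained through a partition of unity $(\psi_j)$ subordinate to a countable cover of $X$ by balls of the above type: applying the weighted analogue of the local statement to each $\int \psi_j |\partial f_n|\,d\nu$ and invoking Fatou's lemma to exchange summation and liminf assembles the bound.

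The main obstacle is the identification of the weak-$*$ limit $\mu$: $L^1(X,m)$-convergence provides only $m$-almost-everywhere pointwise information, whereas the skeleton $\mathcal S$ may be $m$-negligible (as in the Vicsek set, where $m(\mathcal S)=0$). The resolution exploits the continuity of the Sobolev representatives and the full-support hypothesis on $m$ — which promote $m$-a.e.\ convergence to convergence on a dense subset of $X$ — together with the Morrey estimate to pass to the limit in the fundamental identity on a generating family of sets, thereby determining $\mu$ without requiring any compatibility between $m$ and $\nu$.
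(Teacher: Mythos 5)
Your easy direction is the same as the paper's. For the reverse inequality, your weak-$*$ compactness strategy has a genuine gap at exactly the step you call the crux: the identification of the limit measure $\mu$. Two things go wrong. First, weak-$*$ convergence of the \emph{signed} measures $\mu_n=(\partial_{x_0}f_n)\mathbf{1}_{\mathcal S\cap B}\,d\nu$ does not let you pass to the limit in $\mu_{n_k}(]u,v[)$: positive and negative mass can concentrate at the same point and cancel in the limit while $\mu_{n_k}(]u,v[)$ stays bounded away from $\mu(]u,v[)$. Already on $[-1,1]$ with $g_n=n\mathbf{1}_{[0,1/n]}-n\mathbf{1}_{[-1/n,0]}$ one has $g_n\,dx\rightharpoonup 0$ weak-$*$ yet $\int_{(0,1)}g_n\,dx=1$ for all $n$; so the fundamental identity does not survive the limit, and none of the tools you invoke (continuity of representatives, full support of $m$, the $p=1$ Morrey bound, which is not an equicontinuity estimate) addresses this cancellation. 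Second, even granting agreement on all segments, the weak-$*$ limit may charge $\overline{B}\setminus\mathcal S$, since mass can leak off the skeleton onto branch points or, in Vicsek-type examples, onto the $m$-full complement of $\mathcal S$; this is not fatal for the inequality you want, but it means $\mu$ is not "pinned down" as claimed. The standard repair is to work instead with the nonnegative total-variation measures $|\partial f_{n_k}|\,d\nu$, use the closed-set half of the portmanteau theorem, and choose partition points off the countable atom set of the limit; as written, though, the identification step is unjustified and would fail.

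The paper avoids limit measures entirely. It extracts a pointwise convergent subsequence (Helly's selection theorem), uses the scalar inequality $|f(u_{i+1})-f(u_i)|\le\liminf_k\int_{]u_i,u_{i+1}[}|\partial f_{n_k}|\,d\nu$ on each piece of a partition of a finite union of segments, sums these using superadditivity of $\liminf$, and then takes the supremum over partitions, which recovers $\int_{\mathcal A}|\partial f|\,d\nu$ on the left precisely because $f$ is absolutely continuous; exhausting $\mathcal S$ by such finite unions finishes the proof. That route needs no weak-$*$ limits, no identification of a limit object, and no partition of unity, and it sidesteps the signed-measure cancellation issue that your argument runs into.
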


\begin{proof}
It is plain that $W^{1,1}(X,m) \subset BV(X,m)$ and that for $f \in W^{1,1}(X,m)$,
\[
\| \partial f  \|(X) \le \int_\mathcal{S} | \partial f | d\nu.
\]
Consider now $f \in W^{1,1}(X,m)$ and  $f_n \in W^{1,1}(X,m)$ such that $f_n \to f$ in $L^1(X,m)$ and $\int_\mathcal{S} | \partial f_n | d\nu \to C$ with $C>0$, then from Helly's selection theorem (see for instance \cite[Exercise 8.3]{MR2759829}) one can extract a subsequence $f_{n_k}$ that pointwisely converges to $f$. Let $x \in X$. We have then for every $u,v \in B(x,\iota(x))$, 
\[
| f_n(u)-f_n(v)| \le \int_{]u,v[} | \partial f_n | d\nu
\]
which yields 
\[
| f(u)-f(v)| \le \liminf \int_{]u,v[} | \partial f_{n_k} | d\nu.
\]
Let now $\mathcal{A}$ be any finite union of sets of the type $]u,v[$ with $d(u,v) < \iota(u)$. Partitioning $\mathcal{A}$ into subsets of the type $]u_{i},u_{i+1}[$ or $[u_i,u_{i+1}[$ or $]u_{i},u_{i+1}]$, summing up the inequalities
\[
| f(u_{i+1})-f(u_i)| \le \liminf \int_{]u_i,u_{i+1}[} | \partial f_{n_k} | d\nu.
\]
and taking the  supremum over such partitions yields
\[
\int_{\mathcal{A}} | \partial f | d\nu \le \liminf \int_{\mathcal{A}} | \partial f_{n_k} | d\nu \le C.
\]
This gives
\[
\int_{\mathcal{S}} | \partial f | d\nu \le C
\]
and therefore
\[
\int_\mathcal{S} | \partial f | d\nu \le \| \partial f  \|(X) .
\]
\end{proof}

We will see in next section that under some appropriate assumptions $BV(X,m)$ is a Banach space, see Corollary \ref{Banach space property}. We note that, in general, functions in $BV(X,m)$ are not continuous; for instance if $X=\mathbb{R}$ and $m=\nu=\textrm{Lebesgue measure}$, then $1_{[0,1]} \in BV(X,m)$.  More generally, assume  that $X$ is a cable system as in Example \ref{cable system example}.  Let $a,b\in X$ with $d(a,b) < \min (\iota (a),\iota (b))$. Assume that $N_a$ (resp. $N_b$), the number  of connected components of $B(a,d(a,b))\setminus \{ a\}$ (resp. $B(b,d(a,b))\setminus \{ b\}$), is finite, then $1_{[a,b]} \in BV(X)$ and 
\[
\| \partial 1_{[a,b]} (X) \| \le N_a+N_b-2.
\]
Indeed, in that case it is easy to check that the sequence 
\[
f_n(x) =\left(1-nd(x,c(a,b,x)) \right)_+
\]
converges to $1_{[a,b]} $ in $L^1(X,m)$ and that $\int_\mathcal{S} | \partial f_n | d\nu \le  N_a+N_b-2$.

\section{Korevaar-Schoen type characterizations of the Sobolev and BV spaces and applications}\label{KS section}

We first introduce the two main  assumptions we will be working with in this section. Let $r_0>0$ and   $\Phi:[0,r_0] \to[0,+\infty)$ be an increasing  function such that there exist $ \alpha \ge 1$ such that

\begin{align}\label{volume assumption}
c\frac{R}{r}\le\frac{\Phi(R)}{\Phi(r)} \le C \left(\frac{R}{r}\right)^\alpha \text{ for any }r\le R \le r_0.
\end{align}

We make the assumption of \textbf{uniform volume control}:

\

\noindent 

\textit{There exists $r_0>0$ such that for every $x \in X$ and $0<r<r_0$}
$$c \Phi(r)\le m(B(x,r))\le C\Phi(r).$$

\

In particular, the measure $m$ is uniformly locally doubling, i.e. for every $x \in X$ and $0<r<r_0$
\[
 m(B(x,2r))\le C m(B(x,r)),
\]
and as a consequence, see \cite[Lemma B.3.4.]{Gromov}, the metric space $(X,d)$ is uniformly locally doubling, which means: There exist $R>0$ and $N=N_R$ such that for every $x \in X, 0<r<R$, any ball $B(x,r)$ can be covered by at most $N$ balls of radius $r/2$ where $N$ is independent of $x$ and $r$. 

\

The second main assumption of this section is the following: 

\

\noindent \textit{The metric space $(X,d)$ is a \textbf{uniform local tree} meaning that there exists $0< \underline{\iota} \le \mathrm{diam}(X)$ such that for every $x \in X$, $B(x,\underline{\iota})$ is a real tree.}

\
 
\begin{Example}
   Let $(\mathbf{G},d)$ be a cable system as in Example  \ref{cable system example}. Assume such that for some $N \ge 1$ and $a >0$:
   \begin{enumerate}
\item Any vertex has at most $N$ neighboring vertices;
\item Any edge has length more than $a>0$;
   \end{enumerate}
   Then $(\mathbf{G},d,m)$ satisfies  the assumptions with $m$ being the length measure. In that case, one can take $r_0=a$,  $\Phi(r)=r$  and $\underline{\iota}=a$. 
\end{Example}

\begin{Example}[Vicsek sets]
The Vicsek set $(V,d,m)$ from Example \ref{example Vicsek set} satisfies the assumptions with $m$ being the Hausdorff measure. In that case one can take $r_0=\mathrm{diam}(V)$,  $\Phi(r)=r^{\frac{\ln 5}{\ln 3}}$  and $\underline{\iota}=\mathrm{diam}(V)$. More generally the scale irregular Vicsek sets introduced in  \cite{chen2024besovlipschitznormpenergymeasure} satisfy the assumptions. Interestingly, for those scale irregular Vicsek sets, the function $\Phi$ has several scales, see  \cite[Proposition 2.9]{chen2024besovlipschitznormpenergymeasure}.
\end{Example}

\subsection{Controlled partitions of unity}

Our key lemma is the following.

\begin{lemma}\label{bump function}
Let $\varepsilon_0>0$ be small enough.  Let $x \in X$ and $0< \varepsilon < \varepsilon_0$. There exists an absolutely continuous function $\Psi^{\varepsilon}_x \in AC(X)$  such that:

\begin{enumerate}
\item $\Psi^{\varepsilon}_x(x)=1$;
\item $0\le \Psi^{\varepsilon}_x \le 1$;
\item If $d(y,x) \le \varepsilon/2$ then $\Psi^{\varepsilon}_x (y) \ge 1/2$;
\item If $d(y,x) \ge 2 \varepsilon $ then  $\Psi^{\varepsilon}_x(y)=0$;
\item For $\nu$-a.e.  $y \in \mathcal{S}$,  $|\partial \Psi^{\varepsilon}_x (y) | \le \frac{1}{\varepsilon}$ and if $d(x,y) \ge \varepsilon$, $|\partial \Psi^{\varepsilon}_x (y) |=0$;
\item  $\nu \left( \mathrm{supp} (\partial \Psi^{\varepsilon}_x) \right) \le C \varepsilon$ where the constant $C>0$ is independent from $x$ and $\varepsilon$.
\end{enumerate}
\end{lemma}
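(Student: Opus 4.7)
My strategy is to assemble $\Psi_x^\varepsilon$ as a one–dimensional cut-off composed with the nearest-point projection onto a short finite subtree $T\subset\overline{B(x,\varepsilon)}$. The role of $T$ is to concentrate the weak gradient on a set of total length $O(\varepsilon)$, giving (vi) for free, while the cut-off produces the correct values and the projection extends the function continuously to all of $X$.

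First I fix $\varepsilon_0<\underline\iota/4$, so that $B(x,2\varepsilon)\subset B(x,\underline\iota)$ is a genuine real tree for every $\varepsilon\le\varepsilon_0$. Uniform volume control makes $m$ uniformly locally doubling, hence by \cite[Lemma B.3.4]{Gromov} the metric space $(X,d)$ itself is uniformly locally doubling; combined with $\Phi(R)/\Phi(r)\le C(R/r)^\alpha$, this yields a universal bound $N\le N_0$ on the cardinality of any maximal $(\eta\varepsilon)$-separated subset $\{y_1,\dots,y_N\}$ of $\overline{B(x,\varepsilon)}$ (with $y_1=x$), for any fixed $\eta\in(0,1/2)$. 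In the real tree $B(x,\underline\iota)$ the Steiner hull $T:=\bigcup_{i=1}^N[x,y_i]$ is a finite subtree and $\nu(T)\le\sum_i d(x,y_i)\le N_0\varepsilon$. The unique $1$-Lipschitz closest-point projection $\pi_T$ onto the closed subtree $T$ satisfies $\pi_T(y)\in[x,y]$ (since $x\in T$) so that $d(x,\pi_T(y))\le d(x,y)$, and it is locally constant on every connected component of $B(x,\underline\iota)\setminus T$ (a ``lateral branch''). The Steiner-point identity of Lemma 1.2 in \cite{Chiswell} applied to $x$, $y_i$ and the exit point $y^*$ of $[x,y]$ on $S(x,\varepsilon)$ moreover shows that, when $d(x,y)\ge\varepsilon$, the point $c(x,y^*,y_i)$ lies in $T\cap[x,y]$ at distance $\ge\varepsilon-\eta\varepsilon$ from $x$ for the $y_i$ closest to $y^*$, so that $d(x,\pi_T(y))\ge(1-\eta)\varepsilon$.

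I then set
\[
\Psi_x^\varepsilon(y):=\varphi\bigl(d(x,\pi_T(y))\bigr)\quad\text{on }B(x,\underline\iota),
\]
extended by zero outside, where $\varphi:[0,\infty)\to[0,1]$ is piecewise linear with $\varphi\equiv 1$ on $[0,\varepsilon/2]$, $\varphi\equiv 0$ on $[(1-\eta)\varepsilon,\infty)$, and $|\varphi'|\le 1/\varepsilon$ on the intermediate linear window (which has width $\varepsilon/2-\eta\varepsilon$, hence accommodates the Lipschitz bound as soon as $\eta<1/2$). Properties (i) and (ii) are immediate. For (iii), $y\in B(x,\varepsilon/2)$ gives $d(x,\pi_T(y))\le d(x,y)\le\varepsilon/2$, so $\Psi_x^\varepsilon(y)=\varphi(\varepsilon/2)=1\ge 1/2$. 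For (iv), the Steiner-point bound gives $d(x,\pi_T(y))\ge(1-\eta)\varepsilon$ as soon as $d(x,y)\ge\varepsilon$ (a fortiori when $d(x,y)\ge 2\varepsilon$), so $\Psi_x^\varepsilon(y)=0$. For (v), the chain rule of Lemma \ref{Chain rule} together with the fact that $d(x,\pi_T(\cdot))$ has unit weak gradient on $T\cap\mathcal S$ and is locally constant on each lateral branch yields $|\partial\Psi_x^\varepsilon|\le|\varphi'|\le 1/\varepsilon$ on $T\cap\mathcal S$ and $|\partial\Psi_x^\varepsilon|=0$ $\nu$-a.e.\ off $T$; in particular this vanishing holds wherever $d(x,y)\ge\varepsilon$ because $T\subset\overline{B(x,\varepsilon)}$. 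Finally, $\operatorname{supp}(\partial\Psi_x^\varepsilon)\subset T\cap\mathcal S$ and $\nu(T)\le N_0\varepsilon$, which is (vi).

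The most delicate point, and the one I expect to be the main obstacle, is the simultaneous satisfaction of (iii), (iv) and the sharp pointwise bound in (v): along any geodesic from $x$, $\Psi_x^\varepsilon$ must drop from $\ge 1/2$ at distance $\varepsilon/2$ to $0$ before distance $\varepsilon$, so the linear window of $\varphi$ has width barely $\varepsilon/2$ and leaves essentially no slack for the loss $\eta\varepsilon$ caused by the discreteness of the net $\{y_i\}$. The crux is therefore the quantitative Steiner-point estimate of Step 2, which forces $\pi_T(y)$ to be essentially as far from $x$ as the true exit point on $S(x,\varepsilon)$; once this is established, a single choice of $\eta$ small enough (absorbed into the universal constant $N_0$) closes the argument, at the cost at most of a harmless rescaling of $\varepsilon_0$ that does not affect (v) or (vi).
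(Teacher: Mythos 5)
Your architecture (a finite subtree $T$ anchored at $x$, the tree projection $\pi_T$, and a radial profile $\varphi$) is essentially the paper's own: its function $1-\frac{1}{\varepsilon}\max_{u\in S_\varepsilon}d(x,c(x,y,u))$ is exactly $\varphi(d(x,\pi_T(y)))$ with $\varphi(s)=(1-s/\varepsilon)_+$ and $T=\bigcup_{u\in S_\varepsilon}[x,u]$. But your choices of $T$ and $\varphi$ leave a genuine gap that cannot be closed by taking $\eta$ small. First, the profile you wrote down does not satisfy (v): a function equal to $1$ on $[0,\varepsilon/2]$ and to $0$ on $[(1-\eta)\varepsilon,\infty)$ must drop by $1$ over an interval of length $(\tfrac12-\eta)\varepsilon\le\varepsilon/2$, so $|\varphi'|\ge 2/\varepsilon$ on the linear window; the condition $\eta<1/2$ only makes the window nonempty, it does not ``accommodate the Lipschitz bound''. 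Second, and more structurally, conditions (i), (iv) and (v) together leave no slack at all: along any geodesic from $x$ to a point $y$ with $d(x,y)\ge 2\varepsilon$, writing $y^*$ for the point of $[x,y]$ at distance $\varepsilon$, properties (v) and absolute continuity give $\Psi^\varepsilon_x(y)=\Psi^\varepsilon_x(y^*)=1+\int_{]x,y^*[}\partial\Psi^\varepsilon_x\,d\nu\ge 1-\varepsilon\cdot\frac1\varepsilon=0$, with equality only if $\partial\Psi^\varepsilon_x=-1/\varepsilon$ $\nu$-a.e.\ on the whole of $]x,y^*[$. So (iv) forces the gradient to equal $-1/\varepsilon$ on the entire segment out to the sphere of radius $\varepsilon$ along every branch that continues to distance $2\varepsilon$. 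The Steiner hull of an $\eta\varepsilon$-net of $\overline{B(x,\varepsilon)}$ need not contain such segments: your own estimate only yields $d(x,\pi_T(y^*))\ge(1-\eta)\varepsilon$, so $\Psi^\varepsilon_x$ can stall at the value $\eta>0$ on $]\pi_T(y^*),y]$ and (iv) fails for every $\eta>0$; restoring (iv) by steepening $\varphi$ breaks (v), and rescaling $\varepsilon_0$ changes nothing since the constraints are scale-invariant in $\varepsilon$.

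The fix, which is what the paper does, is to place the leaves of $T$ exactly on the sphere $\{d(x,\cdot)=\varepsilon\}$: let $S_\varepsilon$ be the set of points $u$ with $d(x,u)=\varepsilon$ such that some component of $B(x,3\varepsilon)\setminus\{u\}$ not containing $x$ has diameter $\ge\varepsilon$, and take $T=\bigcup_{u\in S_\varepsilon}[x,u]$ with $\varphi(s)=(1-s/\varepsilon)_+$. Every geodesic reaching distance $2\varepsilon$ passes through some $u\in S_\varepsilon$, so the full segment $[x,u]$ of length exactly $\varepsilon$ lies in $T$ and (iv), (v) hold with the stated constants, while (iii) holds because one only needs $\Psi^\varepsilon_x\ge 1/2$ (not $\equiv 1$) on $B(x,\varepsilon/2)$ --- that is precisely the slack your $\varphi$ gives away. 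The cardinality bound on $S_\varepsilon$, hence (vi), comes from metric doubling applied to the pairwise disjoint far components of diameter $\ge\varepsilon$, the same doubling input you invoke for your net.
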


\begin{proof}
Let $x \in X$ and $\varepsilon_0>0$ be small enough. For $\varepsilon < \varepsilon_0$, consider the ball $B(x,3\varepsilon)$ which is therefore a  tree since $3\varepsilon < \underline{\iota}$. Denote $R_\varepsilon=\{ u \in B(x,3\varepsilon) \mid d(x,u)=\varepsilon \}$. For $u \in R_\varepsilon$, the set $B(x,3\varepsilon)\setminus \{ u\}$ has a least two connected components, and only one of them   contains $x$, we denote this component by $C^0_u$ and the other ones by $C^1_u,C^2_u,\cdots$. We consider then the set
\[
S_\varepsilon=\left\{ u  \in R_\varepsilon \mid \, \exists \,  i \ge 1,   \mathrm{diam} (C^i_u)  \ge \varepsilon \right\}.
\]
Now, recall that as a consequence of the measure being locally doubling, the metric space $(X,d)$ is a uniformly locally doubling metric space. Therefore $B(x,3\varepsilon)$ can be always covered by at most $N$ balls of radius $\varepsilon/2$ where $N$ is independent of $x$ and $\varepsilon$. In particular, since  the $C^i_u$'s, $u \in R_\varepsilon$, are  pairwise disjoint, we deduce that the number of elements in $S_\varepsilon$ is uniformly bounded above by a constant which is  independent from $x$ and $\varepsilon$.
We now define the function  $\Psi^{\varepsilon}_x$ as
\begin{align*}
\Psi^{\varepsilon}_x (y)=
\begin{cases}
1- \frac{1}{\varepsilon}\max_{u \in S_\varepsilon} d(x,c(x,y,u)), \quad y \in B(x,3\varepsilon) \\
0, \quad y \notin B(x,3\varepsilon).
\end{cases}
\end{align*}
Notice that for $y \in \cup_{u \in S_\varepsilon} [ x, u]$, one has 
\[
\Psi^{\varepsilon}_x (y)=1-\frac{1}{\varepsilon} d(x,y).
\]
We now check that $\Psi^{\varepsilon}_x $ satisfies the required conditions. (i) is clear and (ii) follows from the fact that for every $u \in S_\varepsilon$ and $y \in X$, $d(x,c(x,y,u)) \le d(x,u) \le \varepsilon$. Let now $y \in X$ with $d(x,y) \le \varepsilon/2$. We have for every $u \in S_\varepsilon$, $d(x,c(x,y,u)) \le d(x,y) \le 1/2$ so that $1- \frac{1}{\varepsilon}\max_{u \in S_\varepsilon} d(x,c(x,y,u)) \ge 1/2$ and (iii) is satisfied. Consider then $y \in B(x,3\varepsilon)$ with $d(x,y) \ge 2\varepsilon$. There exists some $u \in B(x,3\varepsilon)$ such that $d(x,u)=\varepsilon$ and $u \in [x,y]$. Since $d(u,y) \ge \varepsilon$, we have $u \in S_\varepsilon$. (iv) is therefore satisfied. Concerning (v) and (vi), we can just observe that $\nu$ a.e.
\[
\partial_x \Psi^{\varepsilon}_x =-\frac{1}{\varepsilon} 1_{\cup_{u \in S_\varepsilon} [ x, u]},
\]
so that the conclusion follows because the number of elements in $S_\varepsilon$ is uniformly bounded above.
\end{proof}

The next lemma ensures the existence of good coverings.

 \begin{lemma}\label{cover:local}
  Let $\lambda \ge 1$. Then, there exist  constants $C,r_0>0$ such that for every $r \in (0,r_0)$ the following holds: There exists an at most countable cover of $X$ made of balls $B_i$ of radius $r$ such that each point of $X$ belongs to at most $C$ of the balls $\lambda B_i$, i.e. $\sum_i 1_{\lambda B_i} \le C$.
 \end{lemma}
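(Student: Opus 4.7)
The plan is to obtain the cover as the balls centered at a maximal $r$-separated subset of $X$, and to derive the bounded-overlap property by exploiting the uniform local doubling of the metric space $(X,d)$ which, as noted immediately after the uniform volume control assumption, follows from the volume doubling of $m$ via \cite[Lemma B.3.4.]{Gromov}.

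First I would fix $r_0>0$ small enough that $2\lambda r_0 < R$, where $R$ is the scale from the uniform local doubling property of $(X,d)$. For a given $r \in (0,r_0)$, using Zorn's lemma (or a direct greedy construction based on the separability of $X$), I would select a maximal subset $\{x_i\}_{i \in I} \subset X$ with the property $d(x_i,x_j) \ge r$ for all $i \neq j$. The balls $B(x_i,r/2)$ are then pairwise disjoint, so separability of $X$ forces $I$ to be at most countable; and by maximality of the separated set, for every $y \in X$ there exists $i \in I$ with $d(y,x_i) < r$, so $\{B_i\}_{i\in I}$ with $B_i := B(x_i,r)$ is the desired countable cover.

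Next I would establish the bounded overlap. Fix $y \in X$ and let $J := \{i \in I : y \in \lambda B_i\}$. For any $i,j \in J$ one has $d(x_i,x_j) \le 2\lambda r$, so fixing an arbitrary $i_0 \in J$ all centers $\{x_j\}_{j \in J}$ lie inside $B(x_{i_0},2\lambda r)$. Iterating the uniform local doubling property $k = \lceil \log_2(4\lambda) \rceil$ times, this ball can be covered by at most $N^k$ balls of radius $r/2$; since the centers $x_j$ are $r$-separated and each such small ball has diameter at most $r$, it contains at most one center. Hence $|J| \le N^k$, so that $\sum_i \mathbf{1}_{\lambda B_i}(y) \le N^k$ for every $y$, and we may take $C := N^k$, which depends only on $\lambda$ and on the doubling structure of $(X,d)$.

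The argument presents no substantive obstacle; it is a standard Vitali-type construction. The only mild technical point is ensuring that all scales appearing in the iterated doubling remain within the range of validity of the uniform local doubling, which is precisely what the choice $2\lambda r_0 < R$ achieves. Since the number of doubling iterations needed depends only on $\lambda$, both the admissible radius $r_0$ and the overlap constant $C$ can be expressed explicitly in terms of $\lambda$ and $R,N$.
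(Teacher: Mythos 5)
Your proof is correct and follows essentially the same Vitali-type construction as the paper: an $r$-separated set of centers whose balls cover $X$, with bounded overlap deduced from the uniform local doubling of $(X,d)$ by packing the $r$-separated centers into a ball of radius comparable to $\lambda r$. The only cosmetic difference is that you extract the centers as a maximal separated set (via Zorn/separability) while the paper selects them greedily from a countable dense sequence; both yield the same conclusion.
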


 \begin{proof}
The type of argument is well known, see, for instance, \cite[Lemma B.7.3]{Gromov}; We reproduce it in our setting. Let $x_n$ be a sequence such that $\{ x_n, n \in \mathbb{N}\}$ is dense in $X$. Let $0<r<\frac{R}{2\lambda}$ where $R$ is  such that for every $x \in X, 0<r<R$, any ball $B(x,r)$ can be covered by at most $N$ balls of radius $r/2$ where $N$ is independent of $x$ and $r$. Define inductively an at most countable set $(y_n)$ by defining $y_0:=x_0$ and $y_n:=x_k$ where $k$ is the least index $i$ such that $x_i \notin \cup_{j<n} B(y_j,r)$. If such an index does not exist, we do not define $y_n$. From the definition and the density of $(x_n)$, 
\[
X \subset \bigcup_{i} B( y_i,2r).
\]
Note also that for $i \neq j$, $d(y_i,y_j) \ge r$. We claim that each point of $X$ belongs to at most $C$ balls $B( y_i,2\lambda r)$ where $C$ is uniform. Indeed, for $x \in X$ let
\[
F_x=\{ y_i \mid x \in B(y_i,2\lambda r) \}
\]
Then $F_x \subset B(x,2\lambda r)$. Moreover,  $B(x,2\lambda r)$ can be covered by at most $C$ balls of radius $r/2$ and each of this ball can not contain more than one element of $F_x$. Therefore, the number of elements in $F_x$ is less than $C$.
 \end{proof}

With those two lemmas in hands we can construct convenient partitions of unity.

\begin{theorem}\label{partition unity}
Let $\lambda \ge 4$ and let $\eps_0>0$ be small enough. For $0<\varepsilon<\varepsilon_0$, there exists an at most countable cover of $X$ made of balls $B^\eps_i=B(x_i,\eps)$ of radius $\eps$ such that each point of $X$ belongs to at most $C$ of the balls $ \lambda B^\eps_i$ and there exists and  a subordinated family of functions $\pip_i^\eps$ such that:
  \begin{enumerate}
 \item $\pip_i^\eps \in W^{1,p}(X,m)$;
\item $0\le \pip_i^\eps\le 1$ on $X$;
\item $\sum_i\pip_i^\eps=1$ on $X$; 
\item $\pip_i^\eps=0$ in $X\setminus B_i^{4 \,\eps}$;
\item $ \int_\mathcal{S} |\partial \pip_i^\eps |^p d\nu  \le  \frac{C}{\eps^{ p-1}}$.
\end{enumerate}
\end{theorem}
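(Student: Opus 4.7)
The plan is to normalize the bump functions of Lemma \ref{bump function} along a well-chosen covering provided by Lemma \ref{cover:local}. First, I apply Lemma \ref{cover:local} with radius $\eps/2$ and dilation parameter $2\lambda$ to produce a countable family $(x_i) \subset X$ with $X = \bigcup_i B(x_i, \eps/2)$ and $\sum_i 1_{B(x_i, \lambda \eps)} \le C$. Setting $B_i^\eps := B(x_i, \eps)$, these balls cover $X$ and the dilates $\lambda B_i^\eps = B(x_i, \lambda \eps)$ have uniformly bounded overlap. For each $i$, let $\Psi_i^\eps := \Psi_{x_i}^\eps$ from Lemma \ref{bump function}; property (iii) there gives $\Psi_i^\eps(y) \ge 1/2$ whenever $y \in B(x_i, \eps/2)$. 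Consequently the sum $S := \sum_j \Psi_j^\eps$ (locally finite, because the centers $x_j$ are $\eps/2$-separated and $(X,d)$ is uniformly locally doubling) satisfies $S \ge 1/2$ on $X$ and lies in $AC(X)$ locally. I then define
\[
\pip_i^\eps := \frac{\Psi_i^\eps}{S}.
\]
Properties (ii), (iii), and (iv) follow at once from $0 \le \Psi_i^\eps \le S$, $\sum_i \Psi_i^\eps = S$, and the containment $\{\Psi_i^\eps \ne 0\} \subset B(x_i, 2\eps) \subset B_i^{4\eps}$.

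The main work is property (v). By the chain/Leibniz rules of Lemma \ref{Chain rule}, using $S \ge 1/2$ and $\Psi_i^\eps \le 1$,
\[
|\partial \pip_i^\eps| \le \frac{|\partial \Psi_i^\eps|}{S} + \frac{\Psi_i^\eps \, |\partial S|}{S^2} \le 2|\partial \Psi_i^\eps| + 2 \sum_j |\partial \Psi_j^\eps| \quad \nu\text{-a.e.}
\]
Since $|\partial \Psi_j^\eps| \le \eps^{-1} 1_{B(x_j, \eps)}$ by property (v) of Lemma \ref{bump function}, and $B(x_j, \eps) \subset \lambda B_j^\eps$ has bounded overlap, I obtain $|\partial \pip_i^\eps| \le C/\eps$ $\nu$-a.e. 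The support of $\partial \pip_i^\eps$ is contained in the union, over those indices $j$ with $d(x_i, x_j) \le 3\eps$, of $\mathrm{supp}(\partial \Psi_j^\eps)$; by uniform local doubling the number of such $j$ is bounded by a constant independent of $\eps$ and $i$, and each $\mathrm{supp}(\partial \Psi_j^\eps)$ has $\nu$-measure at most $C\eps$ by property (vi) of Lemma \ref{bump function}. Combining the $L^\infty$ bound with this measure estimate,
\[
\int_\mathcal{S} |\partial \pip_i^\eps|^p \, d\nu \le \Bigl(\frac{C}{\eps}\Bigr)^p \cdot C\eps = \frac{C'}{\eps^{p-1}},
\]
which is (v); property (i) is then immediate since $\pip_i^\eps$ is bounded and supported in $B(x_i, 2\eps)$, whose $m$-measure is finite by the volume control.

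The main technical hurdle is the joint control of the $L^\infty$ size of $\partial \pip_i^\eps$ \emph{and} the $\nu$-measure of its support. Normalization by $S$ forces $\partial S$ into the estimate, so one must track two overlap counts at once: how many bump functions are nonzero near each $x_i$ (which controls the $L^\infty$ bound), and how many branches of the skeleton contribute to $\bigcup_j \mathrm{supp}(\partial \Psi_j^\eps)$ near $x_i$ (which controls the $\nu$-measure of the support). Both counts ultimately reduce to the uniform local doubling of $(X,d)$ and the careful ``tripod'' construction in Lemma \ref{bump function}, which is precisely why the partition of unity can be built using only the uniform volume control assumption.
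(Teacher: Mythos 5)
Your proposal is correct and follows essentially the same route as the paper: cover by Lemma \ref{cover:local}, normalize the bump functions of Lemma \ref{bump function} by their locally finite sum (which is bounded below by $1/2$ because every point lies in a ball where some bump is at least $1/2$), and deduce (v) from the pointwise bound $|\partial\Psi^\eps_{x_j}|\le C/\eps$ together with the $\nu$-measure bound $C\eps$ on the supports and the bounded overlap of the dilated balls. The only differences are cosmetic — you use $\Psi^{\eps}_{x_i}$ over an $\eps/2$-cover where the paper uses $\Psi^{2\eps}_{x_i}$ over an $\eps$-cover, and you spell out the support/overlap bookkeeping for (v) that the paper leaves implicit.
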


\begin{proof}
For $0<\varepsilon<\varepsilon_0$ we first  consider an at most countable cover of $X$ made of balls $B^\eps_i=B(x_i,\eps)$ of radius $\eps$ as in Lemma \ref{cover:local} and define the functions  $\Psi^{2\varepsilon}_{x_i}$ as in Lemma \ref{bump function}. Since $B^\eps_i$ is a covering of $X$, any $x \in X$ belongs to at least one of the balls $B_i^\eps$ and at to  most $N$ of the balls $4B_i^\eps$ where is $N$ uniform, thus 
\[
 1/2 \le  \sum_j \Psi^{2\varepsilon}_{x_j} \le N
\]
Define then
\[
\pip_i^\eps=\frac{\Psi^{2\varepsilon}_{x_i}}{\sum_j \Psi^{2\varepsilon}_{x_j}}.
\]
It is immediate to check that (i)-(iv) are satisfied. Moreover, one has
\[
\partial_{x_i} \pip_i^\eps=\frac{\partial_{x_i} \Psi^{2\varepsilon}_{x_i} }{\sum_j \Psi^{2\varepsilon}_{x_j}}-\Psi^{2\varepsilon}_{x_i}\frac{\sum_j \partial_{x_i} \Psi^{2\varepsilon}_{x_j}}{(\sum_j \Psi^{2\varepsilon}_{x_j})^2}.
\]
from which (v) easily follows, using once again the bounded overlap property.
\end{proof}

\subsection{Regularity of the $p$-energy forms}

We denote by $C_c(X)$ the space of continuous and compactly supported functions. We recall that the metric space $(X,d)$ is said to be proper if closed metric balls are compact.

\begin{proposition}\label{p regularity}
    Assume that $(X,d)$ is proper. Let $p>1$. The $p$-energy form $\mathcal{E}_p$ with domain $W^{1,p}(X,m)$ is regular, that is, the space $C_c(X) \cap W^{1,p}(X,m)$ is dense in $C_c(X)$ for the supremum norm and dense in $W^{1,p}(X,m)$ for the $\| \cdot \|_{W^{1,p}(X,m)}$ norm.
\end{proposition}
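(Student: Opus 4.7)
The plan is to use the partition of unity from Theorem~\ref{partition unity} to realize any target function as a locally finite sum of compactly supported $W^{1,p}$ pieces, and then truncate that sum along a compact exhaustion $K_n \uparrow X$ (which exists since $(X,d)$ is proper and separable). Fix once and for all a scale $\varepsilon$ small enough that $\varepsilon < \varepsilon_0$, $4\varepsilon < \underline{\iota}$, and $\varepsilon < r_0$; on balls of radius at most $4\varepsilon$ the bump construction of Lemma~\ref{bump function}, the local Morrey estimate of Theorem~\ref{thm:Morrey}, and the uniform volume control $m(B(x,r)) \asymp \Phi(r)$ are all simultaneously available. Write $\{\varphi_i^\varepsilon\}_i$ for the partition of unity, $B_i^\varepsilon = B(x_i,\varepsilon)$, and $N$ for the bounded-overlap constant of the cover $\{B_i^{4\varepsilon}\}$.

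\emph{Density in $C_c(X)$ for the sup norm.} Given $f \in C_c(X)$ with support $K$, uniform continuity of $f$ lets me further shrink $\varepsilon$ so that $d(y,x_i)<4\varepsilon$ implies $|f(y)-f(x_i)|<\delta$. Define $f_\delta := \sum_i f(x_i)\,\varphi_i^\varepsilon$; since $f(x_i)=0$ for $x_i \notin K$ and only finitely many $x_i$ land in $K$, this is a finite linear combination of the $\varphi_i^\varepsilon$, hence in $C_c(X) \cap W^{1,p}(X,m)$. Using $\sum_i \varphi_i^\varepsilon \equiv 1$ and the fact that at each $x$ only the $i$ with $d(x,x_i)<4\varepsilon$ contribute,
\[
|f(x) - f_\delta(x)| = \left|\sum_i (f(x)-f(x_i))\,\varphi_i^\varepsilon(x)\right| \le \delta,
\]
and the sup-norm density follows.

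\emph{Density in $W^{1,p}(X,m)$.} Given $f \in W^{1,p}(X,m)$ (continuous by Theorem~\ref{thm:Morrey}), set $I_n := \{i : B_i^{4\varepsilon} \cap K_n \ne \emptyset\}$ (finite by compactness of $K_n$ together with the separation $d(x_i,x_j)\ge\varepsilon$ from Lemma~\ref{cover:local}) and define $f_n := \sum_{i \in I_n} f\,\varphi_i^\varepsilon \in C_c(X) \cap W^{1,p}(X,m)$. For $x \in K_n$, any $i$ with $\varphi_i^\varepsilon(x)\ne 0$ satisfies $x \in B_i^{4\varepsilon}\cap K_n$, so $i \in I_n$; hence $f-f_n \equiv 0$ on $K_n$, which together with $K_n \uparrow X$ and dominated convergence (dominator $|f|$) gives $\|f-f_n\|_{L^p(m)}\to 0$. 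Using the signed Leibniz rule (valid $\nu$-a.e.\ along segments) and $\sum_i \partial\varphi_i^\varepsilon = 0$,
\[
\partial(f-f_n) = (\partial f)\sum_{i\notin I_n}\varphi_i^\varepsilon \;+\; f\sum_{i\notin I_n}\partial\varphi_i^\varepsilon.
\]
The first summand tends to zero in $L^p(\mathcal{S},\nu)$ again by dominated convergence, while bounded overlap yields
\[
\left\|f\sum_{i\notin I_n}\partial\varphi_i^\varepsilon\right\|_{L^p(\nu)}^p \le N^{p-1}\sum_{i\notin I_n}\|f\,\partial\varphi_i^\varepsilon\|_{L^p(\nu)}^p,
\]
so the argument closes provided the full series $\sum_i \|f\,\partial\varphi_i^\varepsilon\|_{L^p(\nu)}^p$ converges.

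The main obstacle is precisely this absolute summability, since $f$ need not lie in $L^p(\mathcal{S},\nu)$ and in fractal examples like the Vicsek blowup $\nu$ itself is not locally finite on balls. The resolution is a two-stage application of Morrey combined with the uniform volume control. First, Morrey on $B_i^{4\varepsilon}$ gives
\[
\sup_{B_i^{4\varepsilon}} |f|^p \le C\left(|f(x_i)|^p + \varepsilon^{p-1}\|\partial f\|^p_{L^p(\mathcal{S} \cap B_i^{4\varepsilon},\nu)}\right),
\]
which combined with $\int |\partial\varphi_i^\varepsilon|^p\,d\nu \le C\varepsilon^{1-p}$ from Theorem~\ref{partition unity}(v) yields $\|f\,\partial\varphi_i^\varepsilon\|_{L^p(\nu)}^p \le C(\varepsilon^{1-p}|f(x_i)|^p + \|\partial f\|_{L^p(\mathcal{S}\cap B_i^{4\varepsilon},\nu)}^p)$. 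Second, a Morrey application on $B_i^\varepsilon$ together with $m(B_i^\varepsilon) \asymp \Phi(\varepsilon)$ bounds $|f(x_i)|^p$ by $C(\Phi(\varepsilon)^{-1}\int_{B_i^\varepsilon}|f|^p\,dm + \varepsilon^{p-1}\|\partial f\|^p_{L^p(\mathcal{S}\cap B_i^\varepsilon,\nu)})$. Summing over $i$ and invoking bounded overlap on $\{B_i^\varepsilon\}$ and $\{B_i^{4\varepsilon}\}$ finally produces
\[
\sum_i \|f\,\partial\varphi_i^\varepsilon\|_{L^p(\nu)}^p \le C\left(\frac{\|f\|_{L^p(m)}^p}{\varepsilon^{p-1}\Phi(\varepsilon)} + \|\partial f\|_{L^p(\nu)}^p\right) < \infty,
\]
so the tail vanishes as $n \to \infty$ and the $W^{1,p}$ density follows.
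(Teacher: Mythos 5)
Your proof is correct and follows essentially the same route as the paper: both parts approximate by pairing $f$ with a compactly supported cutoff assembled from the partition of unity of Theorem~\ref{partition unity} (the paper cuts off outside a single large ball $B_R$ chosen so the tails of $\|f\|_{L^p(m)}$ and $\|\partial f\|_{L^p(\nu)}$ are small, you cut off along a compact exhaustion, and these are interchangeable). The one substantive addition is your double Morrey argument giving $\sum_i\|f\,\partial\varphi_i^\varepsilon\|_{L^p(\nu)}^p\lesssim \varepsilon^{1-p}\Phi(\varepsilon)^{-1}\|f\|_{L^p(m)}^p+\|\partial f\|_{L^p(\nu)}^p$, which correctly supplies the estimate the paper dismisses as ``easy to check'' and is genuinely needed here, since $f$ need not belong to $L^p(\mathcal{S},\nu)$ when $m$ and $\nu$ are mutually singular.
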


\begin{proof}
We consider a partition of unity $\pip_i^\varepsilon$ as in Theorem \ref{partition unity}.  For $f \in C_c(X)$, consider
\[
\hat{f}_\varepsilon=\sum_i f(x_i)\varphi_i^\varepsilon.
\]
Since $f\in C_c(X)$, it is clear from the properness assumption that $\hat{f}_\varepsilon \in C_c(X)\cap W^{1,p}(X,m)$. Proving that $f_\varepsilon$ converges to $f$ for the supremum norm topology easily follows from uniform continuity arguments. Therefore, $C_c(X) \cap W^{1,p}(X,m)$ is dense in $C_c(X)$ for the supremum norm. Next, let $\eta>0$ and $f \in W^{1,p}(X,m)$. One can find a call $B_R$ with radius $R$ large enough so that
\[
\int_{X \setminus B_R} |f|^p d\mu+\int_{\mathcal{S} \setminus B_R} |\partial f|^p d\nu <\eta
\]
Consider the function
\[
\Phi=\sum_{i: B_R \cap B(x_i,2\varepsilon )\neq \emptyset} \pip_i^{\varepsilon/2} \in C_c(X) \cap W^{1,p}(X,m),
\]
where $\varepsilon>0$ is small enough. It is then easy to check that we have $f \Phi \in C_c(X) \cap W^{1,p}(X,m)$ and 
\[
\| f\Phi -f \|_{L^p(X,m)}+\| \partial(f\Phi -f) \|_{L^p(\mathcal{S},\nu)} \le C \eta.
\]
\end{proof}

\subsection{Korevaar-Schoen characterization of the Sobolev and BV spaces}

We now turn to the characterization of the Bolev and BV spaces as Korevaar-Schoen type spaces. For $p \ge 1, r>0,$ and $f \in L^p(X,\mu)$, we define 

\[
E_{p,\Psi_p} (f,r) = \frac{1}{\Psi_p(r)}\int_X \frac{1}{m(B(x,r))}\int_{B(x,r)} |f(y)-f(x)|^p dm(y) dm(x),
\]
where $\Psi_p(r)=r^{p-1}\Phi(r)$. We observe that
\[
\frac{1}{C}\left(\frac{R}{r}\right)^p\le\frac{\Psi_p(R)}{\Psi_p(r)}\le C\left(\frac{R}{r}\right)^{p-1}\frac{\Phi(R)}{\Phi(r)}\text{ for any }r,R\in (0,r_0]\text{ with }r\le R.
\]

Define the following Korevaar-Schoen  space
\begin{align}\label{definition KS}
KS^{\alpha_p,\Psi_p}(X,m) =\left\{ f \in L^p(X,m), \limsup_{r \to 0} E_{p,\Psi_p} (f,r)<+\infty\right\}.
\end{align}

%where $$\alpha_p=1+\frac{d_h-1}{p}.$$

\begin{theorem}\label{weak monotonicity Lp}
Let $p>1$. Then $KS^{\alpha_p,\Psi_p}(X,m) = W^{1,p}(X,m)$ and there exist  constants $c,C,r_0>0$ such that for every $f \in KS^{\alpha_p,p}(X)$
\[
c \sup_{r \in (0,r_0] }E_{p,\Psi_p} (f,r)  \le \int_\mathcal{S} | \partial f |^p d\nu \le C\liminf_{r  \to 0 }E_{p,\Psi_p} (f,r) 
\]
\end{theorem}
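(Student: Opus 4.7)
The plan is to prove the two inequalities by separate methods, which together yield the set equality $KS^{\alpha_p,\Psi_p}(X,m) = W^{1,p}(X,m)$. For the upper bound $\sup_{r\in(0,r_0]} E_{p,\Psi_p}(f,r) \le C\int_\mathcal{S}|\partial f|^p\,d\nu$, the starting point is the local Morrey estimate of Theorem \ref{thm:Morrey}: whenever $x,y$ lie in a common tree ball, $|f(y)-f(x)|^p \le d(x,y)^{p-1}\int_{]x,y[}|\partial f|^p\,d\nu$. Inserting this pointwise bound into the definition of $E_{p,\Psi_p}(f,r)$ and exchanging the order of integration via Fubini, the key geometric observation is that for a fixed $z \in \mathcal{S}$, the condition $z \in ]x,y[$ with $d(x,y)<r$ forces both $x,y$ to lie in $B(z,r)$. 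Together with the uniform volume control $m(B(\cdot,r)) \approx \Phi(r)$ and the identity $\Psi_p(r)=r^{p-1}\Phi(r)$, this cleanly yields the desired upper bound.

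For the reverse direction, assume $\liminf_{r\to 0}E_{p,\Psi_p}(f,r)<\infty$ and use the controlled partition of unity $\{\varphi_i^\varepsilon\}$ from Theorem \ref{partition unity} to build the discretization
\[
\hat{f}_\varepsilon = \sum_i f_i\, \varphi_i^\varepsilon, \qquad f_i := \frac{1}{m(B_i^\varepsilon)}\int_{B_i^\varepsilon} f\, dm.
\]
Since $\sum_i \varphi_i^\varepsilon = 1$ forces $\partial \hat{f}_\varepsilon = \sum_i (f_i-f_{j(x)})\partial\varphi_i^\varepsilon$ for any locally chosen index $j(x)$ with $\varphi_{j(x)}^\varepsilon(x) \neq 0$, I would combine the gradient bound $\int |\partial \varphi_i^\varepsilon|^p\,d\nu \le C\varepsilon^{1-p}$ from Theorem \ref{partition unity} with the Jensen estimate $|f_i-f_j|^p \le Cm(B_j^\varepsilon)^{-1}\int_{B_j^\varepsilon}|f(x)-f(y)|^p\,dm(y)$ for pairs of neighboring balls, together with the bounded overlap and local doubling of $m$. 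After bookkeeping, the output is the crucial estimate
\[
\int_\mathcal{S} |\partial \hat{f}_\varepsilon|^p\,d\nu \le C\, E_{p,\Psi_p}(f, C'\varepsilon),
\]
while a simpler variant of the same Jensen argument shows $\|\hat{f}_\varepsilon - f\|_{L^p(X,m)}^p \le C\Psi_p(\varepsilon) E_{p,\Psi_p}(f, C'\varepsilon) \to 0$ since $\Psi_p(\varepsilon)\to 0$.

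To close the argument, I would extract a sequence $\varepsilon_n \to 0$ along which $E_{p,\Psi_p}(f,C'\varepsilon_n) \to \liminf_{r\to 0} E_{p,\Psi_p}(f,r)$. Then $\hat{f}_{\varepsilon_n}$ is bounded in $W^{1,p}(X,m)$, so reflexivity of $W^{1,p}$ for $p>1$ produces a weakly convergent subsequence, and Mazur's lemma combined with the strong $L^p$ convergence $\hat{f}_{\varepsilon_n}\to f$ identifies the weak limit as $f$, giving $f \in W^{1,p}(X,m)$. Lower semi-continuity of $\mathcal{E}_p$ (Corollary \ref{lower semi-continuity p-energy}) then yields $\int_\mathcal{S} |\partial f|^p\,d\nu \le C\liminf_{r\to 0} E_{p,\Psi_p}(f,r)$. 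The main obstacle is the uniform energy estimate for $\hat{f}_\varepsilon$: the cancellation encoded in $\sum_i \partial \varphi_i^\varepsilon=0$ has to be exploited precisely, and the final scaling $r^{p-1}\Phi(r)=\Psi_p(r)$ emerges only because the sharp geometric properties of the partition of unity (both the pointwise bound on $|\partial\varphi_i^\varepsilon|$ and the length bound on its support established in Lemma \ref{bump function}) combine with the bounded-overlap property of the covering in exactly the right way.
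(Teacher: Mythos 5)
Your proposal is correct and follows essentially the same route as the paper: the Morrey estimate of Theorem \ref{thm:Morrey} for the inequality $\sup_r E_{p,\Psi_p}(f,r)\le C\int_{\mathcal S}|\partial f|^p\,d\nu$, and the discrete convolution $\sum_i f_{B_i^\eps}\varphi_i^\eps$ built from the controlled partition of unity of Theorem \ref{partition unity}, combined with the lower semicontinuity of $\mathcal E_p$ (Corollary \ref{lower semi-continuity p-energy}), for the reverse inclusion. The only cosmetic difference is in the first direction, where you integrate the Morrey bound by Fubini over the skeleton (using that $z\in\,]x,y[$ with $d(x,y)<r$ forces $x,y\in B(z,r)$) rather than summing over a bounded-overlap cover as the paper does; both yield the same scaling $r^{p-1}\Phi(r)=\Psi_p(r)$.
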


\begin{proof}
We adapt to our setting  arguments from \cite[Theorem 5.2]{Bau22}. Thanks to Lemma \ref{cover:local}, we pick constants $C$ and $0<r_0 < \frac{\underline{\iota}}{2}$ small enough so that $\mathrm{(A4)}$ holds and such that for every $r \in (0,r_0)$ there exists an at most countable cover of $X$ made of balls $B_i$ of radius $r$ such that each point of $X$ belongs to at most $C$ of the balls $2B_i$. From Theorem \ref{thm:Morrey}, for $f \in W^{1,p}(X,m)$ and $r<r_0$ we have then:
\begin{align*}
E_{p,\Psi_p}(f,r)&=\frac{1}{\Psi_p(r)}\int_X \frac{1}{m(B(x,r))}\int_{B(x,r)} |f(y)-f(x)|^p dm(y) dm(x)\\
& \le \frac{C}{r^{p-1}\Phi(r)^2} \int_X  \int_{B(x,r)} |f(y)-f(x)|^p dm(y) dm(x)\\
 & \le \sum_i \frac{C}{r^{p-1}\Phi(r)^2} \int_{B_i}  \int_{B(x,r)} |f(y)-f(x)|^p dm(y) dm(x)
 \\
 & \le \sum_i \frac{C}{r^{p -1}\Phi(r)^2} \int_{B_i} \int_{B(x,r)}  \int_{]x,y[} d(x,y)^{p-1} |\partial f|^p(z) d\nu(z) dm(y) dm(x)\\
 & \le \sum_i \frac{C}{\Phi(r)^2} \int_{B_i} \int_{B(x,r)}  \int_{]x,y[}  |\partial f|^p(z) d\nu(z) dm(y) dm(x)
 \\
 & \le \sum_i \frac{C}{\Phi(r)^2} \int_{B_i} \int_{2B_i}  \int_{2B_i}  |\partial f|^p(z) d\nu(z) dm(y) dm(x)\\
 & \le C \sum_i  \int_{2B_i}  |\partial f|^p(z) d\nu(z)\\
 & \le C   \int_{\mathcal{S}}  |\partial f|^p(z) d\nu(z)
\end{align*}

Therefore $W^{1,p}(X,m) \subset KS^{\alpha_p,\Psi_p}(X,m) $, and it remains to prove the converse inclusion. We consider a partition of unity $\pip_i^\varepsilon$ as in Theorem \ref{partition unity} and for $f \in KS^{\alpha_p,p}(X,m)$ we define
\begin{align}\label{discrete convolution}
f_\varepsilon=\sum_i  f_{B_i^\eps} \pip_i^\varepsilon
\end{align}
where $f_{B_i^\eps}=\frac{1}{m(B_i^\eps)}$. Due to the bounded overlap property, for any $x\in B_j^\eps$ it holds that
\begin{align*}
    |f_{\eps}(x)-f(x)|&=\Big| \sum_{i:B_i^{4\eps }\cap B_j^{2\eps C_{cov}}\ne\emptyset}\varphi_i^\eps (x)(f_{B_i^\eps}-f(x))\Big|\\
    &\leq \sum_{i:B_i^{4\eps}\cap B_j^{4\eps }\ne\emptyset}\Big|\frac{1}{m(B_i^\eps)}\int_{B_i^\eps}(f(y)-f(x))\,dm(y)\Big|\\
    &\leq \sum_{i:B_i^{4\eps }\cap B_j^{4\eps }\ne\emptyset}\frac{1}{m(B_i^\eps)}\int_{B_i^\eps}|f(y)-f(x)|\,dm(y)\\
    &\leq \frac{C}{m(B(x,12\eps ))}\int_{B(x,12\eps )}|f(x)-f(y)|dm(y),
\end{align*}
whence
\begin{align}
\|f_{\eps}-f\|_{L^p(X,m)}^p & 
    = \int_X |f_\varepsilon(x)-f(x)|^p dm(x)\\
  &\le C  \int_X\left( \frac{1}{m(B(x,12\eps ))}\int_{B(x,12\eps )}|f(x)-f(y)|dm(y) \right)^pdm(x) \\
  &\le C  \int_X \frac{1}{m(B(x,12\eps ))}\int_{B(x,12\eps )}|f(x)-f(y)|^p dm(y) dm(x) \\
  &\le C \Psi_p(12\varepsilon )  E_{p,\Psi_p}(f,12\varepsilon ) \label{estimate :C4}
\end{align}

Therefore, from the definition of $KS^{\alpha_p,p}(X,m)$,  $f_\varepsilon$ converges to $f$ in $L^p(X,m)$ when $\varepsilon \to 0$. Next, for  $x\in B_j^\eps$, we have that
\begin{align*}
    f_{\eps}(x)-f_{B_j^\eps}= \sum_{i:B_i^{4\eps }\cap B_j^{4\eps }\ne\emptyset}\varphi_i^\eps (x)(f_{B_i^\eps}-f_{B_j^\eps})
    \end{align*}
    It  follows that $f_\varepsilon \in AC(X)$ and that    we have for $\nu$-a.e. $x\in B_j^\eps \cap \mathcal{S}$
    \[
    | \partial f_\varepsilon| (x)\le \sum_{i:B_i^{4\eps }\cap B_j^{4\eps }\ne\emptyset}| \partial \varphi_i^\eps (x)|\, |f_{B_i^\eps}-f_{B_j^\eps}|
    \]
Therefore, from the bounded overlap property and the basic convexity inequality
\[
\left(\sum_{i=1}^n x_i\right)^p \le n^{p-1} \sum_{i=1} x_i^p
\]
we have for $\nu$-a.e. $x\in B_j^\eps \cap \mathcal{S}$
\begin{align*}
| \partial f_\varepsilon| (x)^p &\le C \sum_{i:B_i^{4\eps }\cap B_j^{4\eps }\ne\emptyset}| \partial \varphi_i^\eps (x)|^p\, |f_{B_i^\eps}-f_{B_j^\eps}|^p \\
 &\le C \sum_{i:B_i^{4\eps }\cap B_j^{4\eps }\ne\emptyset}| \partial \varphi_i^\eps (x)|^p\, \left| \frac{1}{m (B_i^\eps) m(B_j^\eps)}\int_{B_i^\eps}\int_{B_j^\eps} (f(y) -f(z)) dm(y)dm(z) \right|^p \\
 &\le C \sum_{i:B_i^{4\eps }\cap B_j^{4\eps }\ne\emptyset}| \partial \varphi_i^\eps (x)|^p\,  \frac{1}{m (B_i^\eps) m(B_j^\eps)}\int_{B_i^\eps}\int_{B_j^\eps} |f(y) -f(z)|^p dm(y)dm(z)   \\
 &\le \frac{C}{\Phi(\varepsilon)} \sum_{i:B_i^{4\eps }\cap B_j^{4\eps }\ne\emptyset}| \partial \varphi_i^\eps (x)|^p\,  \int_{B_j^\eps} \frac{1}{m (B(z,12\varepsilon ))}\int_{B(z,12 \varepsilon )} |f(y) -f(z)|^p dm(y)dm(z),
\end{align*}

where we used the uniform volume growth estimate in the last inequality. As a consequence, and since
\[
 \int_\mathcal{S} |\partial \pip_i^\eps |^p d\nu  \le  \frac{C}{\eps^{ p-1}}
\]
we deduce
\begin{align*}
\int_{B_j^\eps \cap \mathcal{S}} | \partial f_\varepsilon| (x)^p d\nu(x) & \le \frac{C}{\Psi(\varepsilon)} \int_{B_j^\eps} \frac{1}{m (B(z,12\varepsilon ))}\int_{B(z,12\varepsilon )} |f(y) -f(z)|^p dm(y)dm(z). 
\end{align*}
It follows then that
\begin{align*}
\int_{ \mathcal{S}} | \partial f_\varepsilon| (x)^p d\nu(x) &\le \sum_j \int_{B_j^\eps \cap \mathcal{S}} | \partial f_\varepsilon| (x)^p d\nu(x) \\
&\le \frac{C}{\Psi_p(\varepsilon)} \int_{X} \frac{1}{m (B(z,12\varepsilon ))}\int_{B(z,12\varepsilon )} |f(y) -f(z)|^p dm(y)dm(z) \\
& \le C E_{p,\Psi_p}(f, 12 \varepsilon  ).
\end{align*}

Since $f_\varepsilon$ converges to $f$ in $L^p(X,m)$ when $\varepsilon \to 0$, by lower-semicontinuity of $\mathcal{E}_p$ (Corollary \ref{lower semi-continuity p-energy}), we conclude that $f \in W^{1,p}(X,m)$ and 
\[
\int_{ \mathcal{S}} | \partial f| (x)^p d\nu(x) \le \liminf_{\varepsilon \to 0} \int_{ \mathcal{S}} | \partial f_\varepsilon| (x)^p d\nu(x) \le C \liminf_{\varepsilon \to 0} E_{p,\Psi_p}(f, \varepsilon ).
\]

\end{proof}

For $p=1$, we obtain a similar result.

\begin{theorem}\label{characterization BV}
\ $KS^{\Psi_1,1}(X,m) = BV(X,m)$ and there exist  constants $c,C>0$ such that for every $f \in KS^{\Psi_1,1}(X)$
\[
c \sup_{r \in (0,r_0] }E_{1,\Psi_1} (f,r)  \le \| \partial f \|(X) \le C\liminf_{r  \to 0 }E_{1,\Psi_1} (f,r) .
\]

\end{theorem}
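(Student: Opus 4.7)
The strategy is to mirror the proof of Theorem \ref{weak monotonicity Lp} but to replace the reflexivity/weak convergence arguments (which fail for $p=1$) by direct use of the $L^1$-relaxation definition of $BV(X,m)$. Note that here $\Psi_1(r)=\Phi(r)$.

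\textbf{First inclusion $BV(X,m)\subset KS^{\Psi_1,1}(X,m)$.} I would first establish, for any $f\in W^{1,1}(X,m)$ and any $r\in(0,r_0]$, the bound
\[
E_{1,\Psi_1}(f,r)\le C \int_{\mathcal{S}}|\partial f|\,d\nu,
\]
using exactly the chain of inequalities in the first half of the proof of Theorem \ref{weak monotonicity Lp}: apply Lemma \ref{absolute continuity} (the $p=1$ Morrey-type inequality $|f(x)-f(y)|\le \int_{]x,y[}|\partial f|d\nu$), then the covering from Lemma \ref{cover:local} together with the uniform volume control. Now for $f\in BV(X,m)$, by definition there exists a sequence $f_n\in W^{1,1}(X,m)$ with $f_n\to f$ in $L^1(X,m)$ and $\int_{\mathcal{S}}|\partial f_n|\,d\nu\to \|\partial f\|(X)$. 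Extracting an $m$-a.e.\ convergent subsequence and applying Fatou's lemma to the double integral defining $E_{1,\Psi_1}(f,r)$, one gets
\[
E_{1,\Psi_1}(f,r)\le \liminf_n E_{1,\Psi_1}(f_n,r)\le C \|\partial f\|(X)
\]
for every $r\in(0,r_0]$, which yields the left-hand inequality.

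\textbf{Second inclusion $KS^{\Psi_1,1}(X,m)\subset BV(X,m)$.} Let $f\in L^1(X,m)$ with $\liminf_{r\to 0}E_{1,\Psi_1}(f,r)<+\infty$. I would introduce the discrete convolutions
\[
f_\varepsilon=\sum_i f_{B_i^\varepsilon}\,\varphi_i^\varepsilon
\]
exactly as in \eqref{discrete convolution}, using the partition of unity from Theorem \ref{partition unity} with $p=1$ so that $\int_{\mathcal{S}}|\partial\varphi_i^\varepsilon|\,d\nu\le C$. The $L^1$ convergence $f_\varepsilon\to f$ is obtained by repeating verbatim the estimate culminating in \eqref{estimate :C4}, with $p=1$, so $\|f_\varepsilon-f\|_{L^1(X,m)}\le C \Phi(12\varepsilon) E_{1,\Psi_1}(f,12\varepsilon)\to 0$ along a sequence $\varepsilon_k\to 0$ realizing the liminf (note $\Phi(r)\to 0$ as $r\to 0$). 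Then, repeating the $p=1$ version of the gradient estimate for $f_\varepsilon$ (where the convexity inequality $(\sum x_i)^p\le n^{p-1}\sum x_i^p$ is trivial at $p=1$), one obtains
\[
\int_{\mathcal{S}}|\partial f_\varepsilon|\,d\nu\le C\,E_{1,\Psi_1}(f,12\varepsilon).
\]
This shows in particular $f_\varepsilon\in W^{1,1}(X,m)$. By the very definition of $\|\partial f\|(X)$ as the $L^1$-lower-semicontinuous envelope of $g\mapsto\int_{\mathcal{S}}|\partial g|\,d\nu$, it follows that $f\in BV(X,m)$ and
\[
\|\partial f\|(X)\le \liminf_{\varepsilon\to 0}\int_{\mathcal{S}}|\partial f_\varepsilon|\,d\nu\le C \liminf_{\varepsilon\to 0}E_{1,\Psi_1}(f,12\varepsilon),
\]
which is the right-hand inequality.

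\textbf{Main obstacle.} The $p>1$ proof relies on reflexivity of $W^{1,p}$ and Mazur's lemma (via Corollary \ref{lower semi-continuity p-energy}) to show that the limit $f$ of the mollifications lies in the Sobolev space. At $p=1$ this is exactly the step that can fail, so the delicate point is to recognize that the correct replacement is the $L^1$-relaxation definition of $BV$ itself: once $f_\varepsilon\in W^{1,1}(X,m)$, $f_\varepsilon\to f$ in $L^1$ and $\int_{\mathcal{S}}|\partial f_\varepsilon|\,d\nu$ is uniformly bounded along a subsequence, the definition of $\|\partial f\|(X)$ directly yields the conclusion without any compactness argument. A secondary technical point is the lower-semicontinuity of $f\mapsto E_{1,\Psi_1}(f,r)$ in $L^1$ (needed in the first direction), which I handle by Fatou applied to a pointwise a.e.\ convergent subsequence.
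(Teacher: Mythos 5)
Your proposal is correct and follows essentially the same route as the paper: the lower bound is obtained by passing to the limit in $E_{1,\Psi_1}(f_n,r)$ along an approximating sequence in $W^{1,1}$, and the upper bound by reusing the discrete convolutions $f_\varepsilon$ from the $p>1$ proof together with the $L^1$-relaxation definition of $\|\partial f\|(X)$ in place of the reflexivity/Mazur argument. Your added justifications (Fatou along an a.e.\ convergent subsequence, and the observation that $\Phi(12\varepsilon)E_{1,\Psi_1}(f,12\varepsilon)\to 0$) are correct details that the paper leaves implicit.
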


\begin{proof}
The proof is very close to  that of Theorem \ref{weak monotonicity Lp}.
\begin{enumerate}
\item Arguing as before, we get that for every $f \in W^{1,1}(X,m)$
\begin{align*}
E_{1,\Psi_1}(f,r) \le C   \int_{\mathcal{S}}  |\partial f|(z) d\nu(z)
\end{align*}
Let now $f \in L^1(X,\mu)$ and let $f_n \in W^{1,1}(X,m)$ that converges to $f$ in $L^1(X,m)$. We have
\begin{align*}
E_{1,\Psi_1}(f_n,r) \le C   \int_{\mathcal{S}}  |\partial f_n|(z) d\nu(z)
\end{align*}
so that when $n \to +\infty$,
\[
E_{1,\Psi_1}(f,r) \le C   \liminf_{n \to +\infty} \int_{\mathcal{S}}  |\partial f_n|(z) d\nu(z).
\]
We conclude
\[
E_{1,\Psi_1}(f,r) \le C \| \partial f \|(X).
\]
\item With the same notations as in the proof of Theorem \ref{weak monotonicity Lp} one has
\begin{align*}
\int_{ \mathcal{S}} | \partial f_\varepsilon| (x) d\nu(x) \le  C E_{p,\Psi_p}(f, 12\varepsilon  ).
\end{align*}
Therefore, one has
\[
 \| \partial f \|(X)\le C \liminf_{\varepsilon \to 0}  E_{p,\Psi_p}(f, \varepsilon ).
\]
\end{enumerate}
\end{proof}

\begin{corollary}\label{Banach space property}
 $(BV(X,m), \| \cdot \|_{BV(X)})$ is a Banach space.
\end{corollary}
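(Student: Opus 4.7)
The plan is to prove completeness since the remaining normed-space axioms for $\|\cdot\|_{BV(X)}$ (positive-definiteness, homogeneity, triangle inequality) follow directly from the corresponding properties of $\|\cdot\|_{L^1(X,m)}$ together with a routine diagonal-approximation argument using the definition of $\|\partial f\|(X)$ as an infimum over $L^1$-approximating sequences from $W^{1,1}(X,m)$. The real content is the $L^1(X,m)$-lower semicontinuity of the functional $f \mapsto \|\partial f\|(X)$, which I establish first; once that is in hand, completeness drops out.

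To prove lower semicontinuity, suppose $g_k \to g$ in $L^1(X,m)$ and set $L := \liminf_k \|\partial g_k\|(X)$; without loss of generality $L < \infty$. For each $k$, the definition of $\|\partial g_k\|(X)$ provides a sequence $h_n^{(k)} \in W^{1,1}(X,m)$ with $h_n^{(k)} \to g_k$ in $L^1(X,m)$ and $\liminf_n \int_{\mathcal S}|\partial h_n^{(k)}|\,d\nu \le \|\partial g_k\|(X) + 1/k$. I then pick indices $n_k$ large enough so that $\|h_{n_k}^{(k)} - g_k\|_{L^1(X,m)} < 1/k$ and $\int_{\mathcal S}|\partial h_{n_k}^{(k)}|\,d\nu \le \|\partial g_k\|(X) + 2/k$. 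The triangle inequality gives $h_{n_k}^{(k)} \to g$ in $L^1(X,m)$, so by the definition of $\|\partial g\|(X)$,
\[
\|\partial g\|(X) \;\le\; \liminf_k \int_{\mathcal S} |\partial h_{n_k}^{(k)}|\,d\nu \;\le\; \liminf_k \|\partial g_k\|(X) \;=\; L.
\]

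Now let $(f_n)$ be Cauchy in $BV(X,m)$. Since $\|f_n-f_m\|_{L^1(X,m)} \le \|f_n-f_m\|_{BV(X)}$, the sequence is also Cauchy in $L^1(X,m)$ and converges to some $f \in L^1(X,m)$. Being Cauchy, it is bounded in $BV(X,m)$, so $\sup_n \|\partial f_n\|(X) < \infty$, and the lower semicontinuity just proved yields $\|\partial f\|(X) \le \liminf_n \|\partial f_n\|(X) < \infty$. In particular $f \in BV(X,m)$. To upgrade to $BV$-convergence, fix $\varepsilon > 0$ and choose $N$ with $\|f_n-f_m\|_{BV(X)} < \varepsilon$ for $n,m \ge N$. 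For fixed $n \ge N$, $f_n - f_m \to f_n - f$ in $L^1(X,m)$ as $m \to \infty$, so lower semicontinuity applied to the sequence indexed by $m$ gives
\[
\|\partial(f_n - f)\|(X) \;\le\; \liminf_m \|\partial(f_n - f_m)\|(X) \;\le\; \varepsilon,
\]
while $\|f_n - f\|_{L^1(X,m)} = \lim_m \|f_n-f_m\|_{L^1(X,m)} \le \varepsilon$. Together these give $\|f_n - f\|_{BV(X)} \le 2\varepsilon$ for all $n \ge N$, completing the proof.

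The only mildly delicate step is the diagonal extraction in the lower semicontinuity argument, but this is an entirely standard feature of $L^1$-relaxation and presents no real obstacle.
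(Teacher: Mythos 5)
Your proof is correct, but it takes a genuinely different route from the paper's. The paper deduces the corollary from Theorem \ref{characterization BV}: it identifies $BV(X,m)$ with the Korevaar--Schoen space $KS^{\Psi_1,1}(X,m)$ with equivalent norms and then cites the completeness of the latter from the proof of Theorem 3.1 in \cite{Bau22}; this argument therefore rests on the standing hypotheses of Section \ref{KS section} (uniform volume control and the uniform local tree property). You instead work directly from the definition of $\|\partial\cdot\|(X)$ as an $L^1$-relaxation: the diagonal extraction shows the relaxed functional is automatically $L^1$-lower semicontinuous, and completeness then follows by the standard two-step argument (limit lies in $BV$ by lower semicontinuity applied to a bounded Cauchy sequence; convergence in the $BV$ norm by applying lower semicontinuity to $f_n-f_m\to f_n-f$). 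Your argument is self-contained, more elementary, and---notably---requires none of the Section \ref{KS section} assumptions, so it proves the statement for any local tree with a Radon measure; what it does not give is the quantitative norm equivalence with the KS functional that the paper's route provides as a by-product. The only point you elide is that in the lower semicontinuity step one should first pass to a subsequence of $(g_k)$ realizing $L=\liminf_k\|\partial g_k\|(X)$ so that each $\|\partial g_k\|(X)$ is finite and the approximating sequences $h^{(k)}_n$ exist; this is the content of your ``without loss of generality'' and is harmless.
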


\begin{proof}
From Theorem \ref{characterization BV},  one has $BV(X,m)=(KS^{\Psi_1,1}(X,m)$ and the norm $\| \cdot \|_{BV(X)})$ is equivalent to the norm
\[
\| f \|_{KS^{\Psi_1,1}(X,m))}:=\| f \|_{L^1(X,m)} + \sup_{r \in (0,r_0] }E_{1,\Psi_1} (f,r).
\]
Since $(KS^{\Psi_1,1}(X,m), \| \cdot \|_{KS^{\Psi_1,1}(X,m))})$ is a Banach space, see the proof of \cite[Theorem 3.1]{Bau22}, the conclusion follows.
\end{proof}

\subsection{Critical exponents and real interpolation of the Besov-Lipschitz spaces}

Given $p \ge 1$ and $\alpha \ge 0$, we consider the Besov-Lipschitz space
\[
\mathcal{B}^{\alpha,p}(X,m) =\left\{ f \in L^p(X,m) \mid  \sup_{0<r <1 } E_{p,\alpha} (f,r)<+\infty\right\}
\]
where $E_{p,\alpha}(f,r)$ is defined by 

\begin{align}\label{KS functional}
E_{p,\alpha} (f,r) = \int_X \frac{1}{m(B(x,r))}\left( \int_{B(x,r)} \frac{|f(y)-f(x)|^p}{r^{p\alpha}} dm(y) \right)dm(x) 
\end{align} 

Several properties of the Besov-Lipschitz spaces are pointed out  in \cite[Section 3]{Bau22} in the context of general metric spaces. In this section, as an application, of the previous results we show that on local trees they form a real interpolation scale of spaces. We first point out the following lemma about the critical exponents of the family of   Besov-Lipschitz spaces.

\begin{lemma}
Assume  the volume growth function $\Phi$ is given by
\[
\Phi (r)=r^{d_h}
\]
with $d_h \ge 1$.
 Let $p \ge 1$. Then,
\[
\sup \left\{ \alpha \ge 0  \mid \mathcal{B}^{\alpha,p}(X,m) \text{ contains non-constant fuctions }  \right\}=1+\frac{d_h-1}{p}.
\]
\end{lemma}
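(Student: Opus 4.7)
Set $\alpha^{*} := 1 + \frac{d_h-1}{p}$; the target equality says the critical exponent is $\alpha^*$. The whole argument hinges on a single scaling identity. With $\Phi(r) = r^{d_h}$ we have $\Psi_p(r) = r^{p-1} \Phi(r) = r^{p-1+d_h} = r^{p\alpha^{*}}$, so comparing the two Korevaar--Schoen-type averages gives, for every $f \in L^p(X,m)$ and every $r>0$,
\[
E_{p,\alpha}(f,r) \;=\; r^{p(\alpha^{*} - \alpha)}\, E_{p,\Psi_p}(f,r).
\]
The plan is to combine this identity with the Korevaar--Schoen characterizations (Theorem \ref{weak monotonicity Lp} for $p>1$ and Theorem \ref{characterization BV} for $p=1$) to get both inequalities $\ge \alpha^{*}$ and $\le \alpha^{*}$.

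\emph{Lower bound} ($\ge \alpha^{*}$). Fix $\alpha < \alpha^{*}$ and pick any non-constant, compactly supported function $f \in W^{1,p}(X,m)$, for instance the bump $\Psi^\varepsilon_x$ of Lemma \ref{bump function}, which lies in $AC(X)$ with $\int_{\mathcal S} |\partial \Psi^\varepsilon_x|^p\,d\nu < \infty$ and has compact support. By Theorem \ref{weak monotonicity Lp} (resp.\ Theorem \ref{characterization BV} if $p=1$) there exists $M < \infty$ with $E_{p,\Psi_p}(f,r) \le M$ for all $r \in (0, r_0]$. The scaling identity then yields $E_{p,\alpha}(f,r) \le M r^{p(\alpha^{*}-\alpha)}$ on that range, which is bounded since $\alpha < \alpha^{*}$. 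For the remaining range $r \in [r_0, 1)$, Fubini combined with the uniform local doubling of $m$ gives a crude bound $E_{p,\alpha}(f,r) \le C_\alpha \|f\|_{L^p(X,m)}^p$. Hence $f \in \mathcal{B}^{\alpha,p}(X,m)$, establishing $\sup\{\ldots\} \ge \alpha^{*}$.

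\emph{Upper bound} ($\le \alpha^{*}$). Fix $\alpha > \alpha^{*}$ and let $f \in \mathcal{B}^{\alpha,p}(X,m)$. Rewriting the scaling identity as $E_{p,\Psi_p}(f,r) = r^{p(\alpha - \alpha^{*})} E_{p,\alpha}(f,r)$ and using that $E_{p,\alpha}(f,\cdot)$ is bounded, we see $E_{p,\Psi_p}(f,r) \to 0$ as $r \to 0^+$. The weak monotonicity statement of Theorem \ref{weak monotonicity Lp} (or Theorem \ref{characterization BV}) then gives $\sup_{r \in (0,r_0]} E_{p,\Psi_p}(f,r) \le C \liminf_{r \to 0^+} E_{p,\Psi_p}(f,r) = 0$. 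Consequently, for every $r \in (0,r_0]$,
\[
\int_X \frac{1}{m(B(x,r))}\int_{B(x,r)} |f(y)-f(x)|^p\,dm(y)\,dm(x) \;=\; 0,
\]
which forces $f(x)=f(y)$ for $(m\otimes m)$-a.e.\ pair with $d(x,y) < r_0$. Since $m$ has full support and $X$ is connected, a standard argument shows $f$ must be constant $m$-a.e. Thus $\mathcal{B}^{\alpha,p}(X,m)$ contains only constants, completing the proof.

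The substantive input is the weak monotonicity built into Theorems \ref{weak monotonicity Lp} and \ref{characterization BV}; everything else is bookkeeping around the scaling identity. The one place that requires some care is the final step $\sup_r E_{p,\Psi_p}(f,r)=0 \Rightarrow f \equiv \text{const}$, which I would do either via connectedness of $X$ applied to the $m$-essential level sets, or---slightly cleaner---by invoking Theorems \ref{weak monotonicity Lp}/\ref{characterization BV} again to conclude $|\partial f|=0$ $\nu$-a.e.\ and then using Lemma \ref{absolute continuity} together with density of $\mathcal S$ in $X$ to propagate this to $f$ being locally constant, hence constant.
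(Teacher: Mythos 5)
Your proposal is correct and follows essentially the same route as the paper: both rest on the scaling identity $E_{p,\alpha}(f,r)=r^{p(\alpha^*-\alpha)}E_{p,\Psi_p}(f,r)$ together with the weak monotonicity $\sup_{r\in(0,r_0]}E_{p,\Psi_p}(f,r)\le C\liminf_{r\to 0}E_{p,\Psi_p}(f,r)$ from Theorems \ref{weak monotonicity Lp} and \ref{characterization BV} to force functions in $\mathcal{B}^{\alpha,p}$ with $\alpha>\alpha^*$ to be constant. The only (cosmetic) difference is in the lower bound, where the paper simply notes that at $\alpha=\alpha^*$ the Besov space coincides with $KS^{\Psi_p,p}(X,m)=W^{1,p}(X,m)$, which contains non-constant functions, whereas you exhibit an explicit bump function for each $\alpha<\alpha^*$ and check boundedness over the full range $0<r<1$ — a detail the paper leaves implicit.
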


\begin{proof}
Indeed, if $\alpha =1+\frac{d_h-1}{p}$, then $\mathcal{B}^{\alpha,p}(X,m)$ is the Korevaar-Schoen space $KS^{\Psi_p,p}(X,m)$, which contains non-constant functions, being dense in $L^p(X,m)$. On the other hand let $\alpha > 1+\frac{d_h-1}{p}$ and $f \in \mathcal{B}^{\alpha,p}(X,m)$. One has for $0<r<1$ 
\[
 \int_X \frac{1}{m(B(x,r))}\left( \int_{B(x,r)} \frac{|f(y)-f(x)|^p}{r^{p\alpha}} dm(y) \right)dm(x)  \le C.
\]
Therefore $\liminf_{r \to 0} E_{p,\alpha_p} (f,r)=0$. From (ii) in Theorem \ref{weak monotonicity Lp}, this implies $$\sup_{r \in (0,r_0]} E_{p,\alpha_p} (f,r)=0,$$ so that $f$ is constant.
\end{proof}

We now recall some basic definitions of the real interpolation theory for seminormed spaces as in  \cite{GKS}. Let $X_0$ and $X_1$ be two seminormed Banach spaces. Assume that the pair $(X_0,X_1)$ is a compatible couple, i.e. there is some Hausdorff topological vector space in which each of $X_0$ and $X_1$ is continuously embedded. Then the sum $X_0+X_1$ is a Banach space under the seminorm 
\[
\|f\|_{X_0+X_1}:=\inf\{\|f_0\|_{X_0}+\|f_1\|_{X_1}, f=f_0+f_1\}.
\]

The $K$-functional of $(X_0,X_1)$ is defined for each $f\in X_0+X_1$ and $t>0$ by 
\[
K(f,t,X_0,X_1):=\inf\{\|f_0\|_{X_0}+t\|f_1\|_{X_1} \mid  f=f_0+f_1\}.
\]

Suppose that $0<\theta<1$, $1\le q<\infty$ or $0\le \theta\le 1$, $ q=\infty$. Then the interpolation space $(X_0,X_1)_{\theta,q}$ consists of functions $f\in X_0+X_1$ such that 
\[
\|f\|_{\theta,q}=
\begin{cases}
\left(\int_0^{\infty}(t^{-\theta}K(f,t,X_0,X_1))^q\frac{dt}{t}\right)^{1/q}, &0<\theta<1,1\le q<\infty, \\
\sup_{t>0} t^{-\theta}K(f,t,X_0,X_1), &0\le \theta\le 1, q=\infty,
\end{cases}
\]
is finite. In our setting, we take $X_0=L^p(X,m)$ and $X_1=W^{1,p}(X,m)$ is equipped with the  seminorm $\| \partial f \|_{L^p(X,m)}$ where $p \ge 1$. For $t \ge 0$, we simply denote
\[
K(f,t)=\inf\{\| g \|_{L^p(X,m)}+t\|\partial h\|_{L^p(X,m)} \mid  f=g+h\}.
\]

We have then the following theorem.

\begin{theorem}
Assume  the volume growth function $\Phi$ is given by
\[
\Phi (r)=r^{d_h}
\]
with $d_h \ge 1$. Let $r_0>0$ be as in Theorem  \ref{weak monotonicity Lp}.  There exist  $C_1, C_2>0$   such that for any $f\in L^p(X,m)+ W^{1,p}(X,m)$ if $p>1$ and $f\in L^1(X,m)+ BV(X,m)$ if $p=1$ the following holds for $0<r<r_0$,
\[
C_1 E_{p,0}(f,r)^{\frac1p} \le K(f,r^{\alpha_p}) \le C_2 E_{p,0}(f,r)^{\frac1p},
\]
where $\alpha_p=1+\frac{d_h-1}{p} $. Therefore, if $X$ is compact,  for   $0\le \alpha \le\alpha_p$
\begin{align*}
\mathcal{B}^{p,\alpha}(X)=
\begin{cases}
(L^1(X,m), BV(X,m))_{\alpha/\alpha_1,\infty}, &  p=1 \\
(L^p(X,m), W^{1,p}(X,m))_{\alpha/\alpha_p,\infty}, &  p>1.
\end{cases}
\end{align*}
\end{theorem}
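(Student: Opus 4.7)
The plan is to establish the K-functional equivalence
$$C_1 E_{p,0}(f,r)^{1/p} \le K(f, r^{\alpha_p}) \le C_2 E_{p,0}(f,r)^{1/p}$$
first, then deduce the interpolation identification from it by a change of variables. The two inequalities are proved separately and mirror the two directions of Theorem \ref{weak monotonicity Lp} (respectively Theorem \ref{characterization BV} for $p=1$). Observe first that, with $\Phi(r) = r^{d_h}$, one has $\Psi_p(r) = r^{p \alpha_p}$, so the two Korevaar-Schoen quantities are related by $E_{p,0}(f,r) = \Psi_p(r)\, E_{p,\Psi_p}(f,r)$.

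For the upper bound, I would fix $r \in (0, r_0)$, set $\varepsilon = r/12$, and use the discrete convolution $h := f_\varepsilon = \sum_i f_{B_i^\varepsilon}\, \varphi_i^\varepsilon$ introduced in the proof of Theorem \ref{weak monotonicity Lp} as the smooth piece, with $g := f - f_\varepsilon$. The two estimates obtained there, rescaled via $\Psi_p(r) = r^{p \alpha_p}$, read
$$\|f - f_\varepsilon\|_{L^p(X,m)}^p \le C\, \Psi_p(r)\, E_{p, \Psi_p}(f, r) = C\, E_{p, 0}(f, r)$$
and
$$\int_{\mathcal S} |\partial f_\varepsilon|^p\, d\nu \le C\, E_{p, \Psi_p}(f, r) = C r^{-p \alpha_p} E_{p, 0}(f, r).$$
Plugging these into the definition of $K$ yields $K(f, r^{\alpha_p}) \le \|g\|_{L^p(X,m)} + r^{\alpha_p} \|\partial h\|_{L^p(\mathcal S,\nu)} \le C E_{p, 0}(f, r)^{1/p}$. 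The same decomposition works in the $p = 1$ case, since $f_\varepsilon \in W^{1,1}(X,m) \subset BV(X,m)$ and the corresponding estimates come from Theorem \ref{characterization BV}.

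For the lower bound, I would take any admissible decomposition $f = g + h$ with $g \in L^p(X,m)$ and $h \in W^{1,p}(X,m)$ (or $BV(X,m)$ when $p=1$), and bound
$$E_{p, 0}(f, r)^{1/p} \le E_{p, 0}(g, r)^{1/p} + E_{p, 0}(h, r)^{1/p}.$$
The first term is controlled by $C \|g\|_{L^p(X,m)}$ using the triangle inequality $|g(y)-g(x)|^p \le 2^{p-1}(|g(y)|^p + |g(x)|^p)$, Fubini, and uniform local doubling of $m$ (which makes $\int_X m(B(x,r))^{-1} 1_{B(y,r)}(x)\, dm(x)$ bounded uniformly in $y$ and $r$). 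The second term is controlled by the already established inequality $E_{p,\Psi_p}(h,r) \le C \|\partial h\|_{L^p(\mathcal S,\nu)}^p$ from Theorem \ref{weak monotonicity Lp}, which rescales to $E_{p, 0}(h, r)^{1/p} \le C r^{\alpha_p} \|\partial h\|_{L^p(\mathcal S,\nu)}$, and analogously by Theorem \ref{characterization BV} when $p=1$. Taking the infimum over $(g,h)$ produces $E_{p, 0}(f, r)^{1/p} \le C K(f, r^{\alpha_p})$.

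To identify the interpolation space when $X$ is compact, I would change variables $t = r^{\alpha_p}$ in the $K$-method norm. For $0 < t \le r_0^{\alpha_p}$ the equivalence just proved yields $t^{-\alpha/\alpha_p} K(f, t) \asymp E_{p, \alpha}(f, r)^{1/p}$. For $t > r_0^{\alpha_p}$ the trivial bound $K(f, t) \le \|f\|_{L^p(X,m)}$ gives $t^{-\alpha/\alpha_p} K(f, t) \le r_0^{-\alpha} \|f\|_{L^p(X,m)}$, so the large-$t$ tail is absorbed into the $L^p$ norm (and the restriction $0 < r < 1$ in the definition of $\mathcal{B}^{p,\alpha}(X)$ is harmless since $X$ has finite diameter). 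This identifies $\mathcal{B}^{p,\alpha}(X) = (L^p(X,m), W^{1,p}(X,m))_{\alpha/\alpha_p, \infty}$ for $p > 1$ and the analogous statement for $p = 1$. The only mildly delicate step is the scale matching: once $\varepsilon = r/12$ is chosen so that the partition of unity lives on scale $r$, everything reduces to the estimates of Theorems \ref{weak monotonicity Lp} and \ref{characterization BV}.
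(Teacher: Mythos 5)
Your proposal is correct and follows essentially the same route as the paper: the lower bound on $K$ via Minkowski's inequality applied to an arbitrary decomposition $f=g+h$ together with part (i) of Theorem \ref{weak monotonicity Lp} (resp.\ Theorem \ref{characterization BV}), and the upper bound via the discrete convolution $f_\varepsilon=\sum_i f_{B_i^\varepsilon}\varphi_i^\varepsilon$ with the two estimates already established in that proof, using $\Psi_p(r)=r^{p\alpha_p}$ to translate between $E_{p,\Psi_p}$ and $E_{p,0}$. The only (immaterial) divergence is in the large-scale regime for compact $X$, where you use the trivial decomposition $h=0$ while the paper subtracts the mean of $f$; both suffice.
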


\begin{proof}
We assume $p>1$. The case $p=1$ follows in the same way. We first prove the bound
\[
  E_{p,0}(f,r)^{\frac1p}  \le C K(f,r^{\alpha_p}).
\]
Let $f=g+h$, where $g\in L^p(X,m)$ and $h\in W^{1,p}(X,m)$.
Then by Minkowski's inequality and (i) in  Theorem \ref{weak monotonicity Lp} we have for $0<r<r_0$
\begin{align*}
     E_{p,0}(f,r)^{\frac1p}  
     & \le E_{p,0}(g,r)^{\frac1p}+ E_{p,0}(h,r)^{\frac1p} \\
     &\le  C\left(\|g\|_{L^p(K,\mu)}+r^{\alpha_p}r^{-\alpha_p}E_{p,0}(h,r)^{\frac1p}\right) \\
      &\le  C\left(\|g\|_{L^p(K,\mu)}+r^{\alpha_p}\|h\|_{W^{1,p}(K)}\right).
\end{align*}

We now turn to the proof of the second inequality, that is, $K(f,r^{\alpha_p}) \le C  E_p(f,r)^{\frac1p}$. We consider a partition of unity $\pip_i^\varepsilon$ as in Theorem \ref{partition unity} and for $f \in L^p(X,m)$ we define as before
\begin{align}\label{discrete convolution2}
f_\varepsilon=\sum_i  f_{B_i^\eps} \pip_i^\varepsilon
\end{align}
where $f_{B_i^\eps}=\frac{1}{m(B_i^\eps)}$. Set $g_\varepsilon=f-f_\varepsilon$ so that $f=g_\varepsilon+f_\varepsilon$.  We claim that $g_\varepsilon \in L^p(X,m)$ and $f_\varepsilon \in W^{1,p}(X,m)$ and moreover that both $\|g_\varepsilon \|_{L^p(X,m)}$ and $\| f_\varepsilon \|_{W^{1,p}(X,m)}$ can be bounded in terms of $E_{p,0}(f,r)$. 
Indeed, from the proof of (ii) in Theorem \ref{weak monotonicity Lp} we have
\begin{align*}
\| g_\varepsilon\|_{L^p(X,m)}^p&= \|f_{\eps}-f\|_{L^p(X,m)}^p  \\
 &\le C  \int_X \frac{1}{m(B(x,12\eps ))}\int_{B(x,12\eps )}|f(x)-f(y)|^p dm(y) dm(x) \\
  &\le C E_{p,0}(f,12\varepsilon ).
\end{align*}

Again from the proof of (ii) in Theorem \ref{weak monotonicity Lp} we have
\begin{align*}
\int_{ \mathcal{S}} | \partial f_\varepsilon| (x)^p d\nu(x) &\le \frac{C}{\varepsilon^{d_h+p-1}} \int_{X} \frac{1}{m (B(z,12\varepsilon ))}\int_{B(z,12\varepsilon )} |f(y) -f(z)|^p dm(y)dm(z) \\
& \le \frac{C}{\varepsilon^{d_h+p-1}}  E_{p,0}(f, 12\varepsilon  ).
\end{align*}

We conclude that for every $r=\frac{\varepsilon}{12}<r_0 $ 
\[
K(f,r^{\alpha_p}) \le \| g_\varepsilon \|_{L^p(X,m)}+r^{\alpha_p}\|\partial f_\varepsilon\|_{L^p(X,m)}\le C E_{p,0}(f,r).
\]
This establishes the first part of the statement. For the second part of the statement, assume that $X$ is compact. For $f \in L^p(X,m)$ we can then write $f=g+h$ with $h=\frac{1}{\mu(X)}\int_X f dm$ and deduce that for every  $r > \mathrm{diam} (X)$
\[
K(f,r^{\alpha_p}) \le \left\| f -\frac{1}{\mu(X)}\int_X f dm \right\|_{L^p(X,m)} \le C E_p(f,r)^{1/p}.
\]
The conclusion follows by definition of the interpolation space and of its seminorm.
\end{proof}

\subsection{Nash inequality}

\begin{theorem}\label{Nash local tree} 
Assume the volume growth function $\Phi$ is given by
\[
\Phi (r)=r^{d_h}
\]
with $d_h \ge 1$. Let $p>1$. The following Nash inequality holds for every $f \in W^{1,p}(X,m)$,
\[
\| f \|_{L^p(X,m)} \le C \left(\| f \|_{L^p(X,m)}+\| \partial f \|_{L^p(\mathcal S,\nu)}\right)^{\theta} \| f \|^{1-\theta}_{L^1(X,m)}
\]
where $\theta=\frac{(p-1)d_h}{p-1+pd_h}$. 
\end{theorem}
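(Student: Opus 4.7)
The plan is to combine a Korevaar--Schoen type control of $f-f_r$ by the weak gradient with an $L^1$--$L^\infty$ interpolation bound for the ball-average operator, then optimize over a scale parameter $r$. For $r\in(0,r_0]$ and $x\in X$, set
\[
f_r(x):=\frac{1}{m(B(x,r))}\int_{B(x,r)}f(y)\,dm(y).
\]

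First, Jensen's inequality together with the definition of $E_{p,\Psi_p}$ gives
\[
\|f-f_r\|_{L^p(X,m)}^p\le\int_X\frac{1}{m(B(x,r))}\int_{B(x,r)}|f(y)-f(x)|^p\,dm(y)\,dm(x)=\Psi_p(r)\,E_{p,\Psi_p}(f,r).
\]
The weak monotonicity half of Theorem~\ref{weak monotonicity Lp} then controls $E_{p,\Psi_p}(f,r)$ by $C\|\partial f\|_{L^p(\CS,\nu)}^p$ uniformly in $r\in(0,r_0]$, so that
\[
\|f-f_r\|_{L^p(X,m)}\le C\,r^{(p-1+d_h)/p}\,\|\partial f\|_{L^p(\CS,\nu)}.
\]

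Next, the uniform volume lower bound $m(B(x,r))\ge c\,r^{d_h}$ yields $\|f_r\|_{L^\infty(X,m)}\le Cr^{-d_h}\|f\|_{L^1(X,m)}$, while Fubini combined with the local doubling property gives $\|f_r\|_{L^1(X,m)}\le C\|f\|_{L^1(X,m)}$. Interpolating via $|f_r|^p\le\|f_r\|_{L^\infty}^{p-1}|f_r|$, one obtains
\[
\|f_r\|_{L^p(X,m)}\le Cr^{-d_h(p-1)/p}\|f\|_{L^1(X,m)}.
\]
The triangle inequality then yields, for every $r\in(0,r_0]$,
\[
\|f\|_{L^p(X,m)}\le C\bigl(r^a A+r^{-b}B\bigr),
\]
with $A:=\|\partial f\|_{L^p(\CS,\nu)}$, $B:=\|f\|_{L^1(X,m)}$, $a:=\frac{p-1+d_h}{p}$, and $b:=\frac{d_h(p-1)}{p}$.

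Optimizing formally in $r>0$ gives a minimum proportional to $A^{b/(a+b)}B^{a/(a+b)}$, and one checks directly that $\frac{b}{a+b}=\frac{(p-1)d_h}{p-1+pd_h}=\theta$, with $\frac{a}{a+b}=1-\theta$. If the formal optimizer $r^\ast=(bB/aA)^{1/(a+b)}$ lies in $(0,r_0]$, this finishes the proof. The only subtlety is the case $r^\ast>r_0$, which forces $B\ge c_0 A$ for some constant $c_0>0$; choosing $r=r_0$ then gives the crude bound $\|f\|_{L^p(X,m)}\le C_{r_0}B$, and the Nash inequality is recovered by writing
\[
\|f\|_{L^p(X,m)}=\|f\|_{L^p(X,m)}^\theta\,\|f\|_{L^p(X,m)}^{1-\theta}\le\bigl(\|f\|_{L^p(X,m)}+A\bigr)^\theta\,(C_{r_0}B)^{1-\theta}.
\]
The main conceptual point is the use of the \emph{uniform}, rather than merely $\liminf$-type, Korevaar--Schoen bound provided by Theorem~\ref{weak monotonicity Lp}; this is what allows the argument to proceed at arbitrary scale $r\in(0,r_0]$, and is the core ingredient behind the whole inequality.
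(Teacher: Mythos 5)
Your proof is correct, and it shares its two key ingredients with the paper's argument: the pseudo-Poincar\'e estimate $\| f - \mathcal{M}_r f\|_{L^p(X,m)} \le C r^{(p-1+d_h)/p}\|\partial f\|_{L^p(\mathcal{S},\nu)}$ coming from the uniform (sup over $r$) half of the Korevaar--Schoen bound in Theorem~\ref{weak monotonicity Lp}, and the $L^1\to L^\infty$ bound $\|\mathcal{M}_r f\|_{L^\infty}\le Cr^{-d_h}\|f\|_{L^1}$ from the volume lower bound. Where you diverge is the final step: the paper feeds these two estimates (together with the truncation/cutoff stability properties it records at the start of its proof) into the general machinery of Bakry--Coulhon--Ledoux--Saloff-Coste to obtain the whole Gagliardo--Nirenberg scale at once, whereas you close the argument by hand with the elementary triangle-inequality-plus-optimization-in-$r$ scheme, including the correct treatment of the boundary case $r^\ast>r_0$ (which is where the extra $\|f\|_{L^p}$ term inside the $\theta$-power is genuinely needed). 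Your route is more self-contained and makes the origin of the exponent $\theta=\frac{(p-1)d_h}{p-1+pd_h}$ transparent, and it avoids the truncation properties entirely since the left-hand side stays in $L^p$; the paper's route buys, for the same price, the full family of Gagliardo--Nirenberg inequalities rather than just the stated Nash inequality. Both are valid proofs of the theorem as stated.
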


\begin{proof}
We first note the following two easily proved properties which indicate that the Sobolev norm behaves nicely under cutoffs. In what follows we denote $a\wedge b=\min (a,b)$ and $a_+=\max (a,0)$.
\begin{itemize}
\item For every $s,t \ge 0$, and $f\in W^{1,p}(X,m)$,  
\[
\int_\mathcal{S} |\partial((f-t)_+ \wedge s )|^p d\nu \le \int_\mathcal{S} | \partial f|^p d\nu.
\]
\item  For any non-negative $f \in W^{1,p}(X,m)$  and any $\rho>1$,
\[
 \sum_{k \in \mathbb{Z}} \int_\mathcal{S} |\partial f_{\rho,k}|^p d\nu \le  \int_\mathcal{S} |\partial f|^p d\nu,
\]
where $f_{\rho,k}:=(f-\rho^k)_+ \wedge \rho^k(\rho-1)$, $k \in \mathbb{Z}$. 
\end{itemize}
On the other hand, consider the averaging operator
\[
\mathcal{M}_rf(x)=\frac{1}{m(B(x,r))}\int_{m(B(x,r)} f d\mu
\]
It immediately follows from $\mathrm{(A4)}$ that for any $f \in L^1(X,m)$ one has for $0<r<r_0$
\[
\| \mathcal{M}_r f \|_{L^\infty(X,\mu)} \le \frac{C}{r^{d_h}} \| f \|_{L^1(X,m)}.
\]
From H\"older's inequality and Theorem \ref{weak monotonicity Lp} we also have for every $f\in W^{1,p}(X,m)$ and  $0<r<r_0$
\[
\| f -\mathcal{M}_r f \|_{L^p(X,\mu)} \le  r^{\alpha_p} \, \sup_{\rho \in (0,r_0)} E_{p,\alpha_p} (f,\rho)^{1/p} \le C r^{\alpha_p} \, \left(\int_{\mathcal{S}} |\partial f|^p d\nu\right)^{1/p}.
\]
The previous observations are enough to obtain the full scale of Gagliardo-Nirenberg inequalities and in particular the stated Nash inequalities:  This follows from applying the results of \cite[Theorem 9.1]{MR1386760} . 
\end{proof}

\section{Heat kernel characterization of the  Sobolev and BV spaces}

Everywhere in this section, we assume that the metric space $(X,d)$ is proper and that the assumptions  of Section \ref{KS section} are satisfied. In particular, one has for every $x \in X$ and $0<r<r_0$

$$\frac{1}{C}\Phi(r)\le m(B(x,r))\le C\Phi(r).$$

As before, we denote $\Psi_p(r)=r^{p-1}\Phi(r)$ so that $\Psi_2(r)=r\Phi(r)$.  Our goal in this section is to construct a heat kernel $p_t(x,y)$ on $(X,d,m)$ and prove that it satisfies the following estimates: 

\begin{itemize}
\item \textbf{Upper bound:} For $t \in (0,t_0], d(x,y) \le \underline{\iota}/2$, 
\begin{align}
p_t(x,y) \le \frac{C_1}{\Phi (\Psi_2^{-1}(t))} \exp \left(-C_2 \frac{d(x,y)}{\Phi^{-1} \left( t/d(x,y) \right)} \right).  \label{UB}
\end{align}
\item \textbf{Near diagonal lower bound:} For $\Psi_2(c_2d(x,y) )\le t \le t_0$,
\begin{align}
p_t(x,y) \ge \frac{c_1}{\Phi (\Psi_2^{-1}(t))}. \label{NDLB}
\end{align}
\item \textbf{Escape rate estimate:} For $t \in (0,t_0], r \le \underline{\iota}/2$,
\begin{align}\label{ERE}
\int_{X \setminus B(x,r)} p_t(x,y) dm(y) \le C_1 \exp \left(-C_2 \frac{r}{\Phi^{-1} \left( t/r \right)} \right). 
\end{align}
\end{itemize}

Those estimates will be obtained by  localizing some of the arguments in \cite{KumagaiPRIMS}. We point out that some of the arguments have also been a little simplifies like the proof of Theorem \ref{thm:upper-Holder}. As a consequence of those estimates, in the second part of the section we will then obtain a heat kernel characterization of the class $BV(X,m)$ and  the Sobolev spaces $W^{1,p}(X,m)$, $p >1$.

\subsection{Heat kernel estimates}\label{Heat kernel estimates}

\begin{lemma}
The 2-energy form
\[
\mathcal{E}_2 (f)=\int_{\mathcal S} | \partial f |^2 d\nu
\]
with domain $W^{1,2}(X,m)$ is a regular Dirichlet form in $L^2(X,m)$. 
\end{lemma}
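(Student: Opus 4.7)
The plan is to check the four defining properties of a regular Dirichlet form on $L^2(X,m)$: (i) dense domain, (ii) closedness, (iii) the Markovian property, and (iv) regularity (existence of a $C_c$-core). Three of them are essentially immediate from results already proved in the preceding sections, and only (iii) requires a small additional argument.

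First I would dispose of (ii) by observing that closedness of $\mathcal{E}_2$ is equivalent to the completeness of $(W^{1,2}(X,m), \|\cdot\|_{W^{1,2}})$, which is exactly Proposition \ref{closedness p-energy} specialised to $p=2$ (any $L^2$-convergent sequence that is Cauchy for the form norm is Cauchy in $W^{1,2}$, hence convergent there, and the $L^2$-limit forces identification of the limit). For (iv), Proposition \ref{p regularity} with $p=2$ already states that $C_c(X) \cap W^{1,2}(X,m)$ is dense both in $(C_c(X), \|\cdot\|_\infty)$ and in $(W^{1,2}(X,m), \|\cdot\|_{W^{1,2}})$, which is precisely the regularity statement; combining the sup-norm density with the standard density of $C_c(X)$ in $L^2(X,m)$, valid because $m$ is a Radon measure on the locally compact space $X$, then yields (i).

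The only genuine step is (iii), and this is the point I expect to be the main obstacle because the chain rule of Lemma \ref{Chain rule} is stated only for $C^1$ post-compositions, whereas the normal contraction $\psi(t):=(0\vee t)\wedge 1$ is merely Lipschitz. My plan is an approximation argument. Choose $\psi_n\in C^1(\mathbb{R})$ with $\psi_n(0)=0$, $|\psi_n'|\le 1$ and $\psi_n\to\psi$ locally uniformly on $\mathbb{R}$ (for instance by mollifying $\psi$ and subtracting off the value at $0$). The bound $|\psi_n(t)|\le |t|$ then implies $\psi_n(f)\in L^2(X,m)$ for every $f\in W^{1,2}(X,m)$, and Lemma \ref{Chain rule}(i) yields $\psi_n(f)\in AC(X)$ with $|\partial\psi_n(f)|=|\psi_n'(f)|\,|\partial f|\le |\partial f|$, so $\psi_n(f)\in W^{1,2}(X,m)$ and $\mathcal{E}_2(\psi_n(f))\le \mathcal{E}_2(f)$. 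The domination $|\psi_n(f)-\psi(f)|\le 2|f|\in L^2(X,m)$ combined with pointwise convergence lets me invoke dominated convergence to conclude $\psi_n(f)\to\psi(f)$ in $L^2(X,m)$, and the lower semi-continuity of Corollary \ref{lower semi-continuity p-energy} finally gives $\psi(f)\in W^{1,2}(X,m)$ with $\mathcal{E}_2(\psi(f))\le \liminf_n \mathcal{E}_2(\psi_n(f))\le \mathcal{E}_2(f)$, which is the Markovian property.
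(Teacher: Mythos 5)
Your proof is correct, and three of its four components (density of the domain, closedness via Proposition \ref{closedness p-energy}, and regularity via Proposition \ref{p regularity}) coincide with what the paper does. The only genuine divergence is in the Markov property. The paper's argument there is a one-line metric observation: if $u,v\in W^{1,2}(X,m)$ satisfy $|u(x)-u(y)|\le|v(x)-v(y)|$ for all $x,y$, then, because the weak gradient on a local tree is entirely encoded by increments along geodesic segments, one gets $|\partial u|\le|\partial v|$ $\nu$-a.e.\ and hence $\mathcal{E}_2(u)\le\mathcal{E}_2(v)$; applying this with $u=(0\vee v)\wedge 1$ gives the contraction property directly. Your route instead mollifies the unit contraction into $C^1$ functions $\psi_n$, invokes the chain rule of Lemma \ref{Chain rule}, and passes to the limit using dominated convergence in $L^2$ together with the lower semicontinuity of Corollary \ref{lower semi-continuity p-energy}. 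This is a perfectly sound and in some respects more scrupulous argument: the paper's formulation tacitly presupposes that the contracted function already lies in $W^{1,2}(X,m)$, whereas your approximation produces membership in the domain as part of the conclusion. The price is length and a reliance on lower semicontinuity (note that the Corollary as stated assumes $\mathcal{E}_2(\psi_n(f))$ converges, so you should pass to a subsequence along which the bounded sequence of energies converges before invoking it — a trivial fix). The paper's approach, by contrast, exploits the one-dimensional geodesic structure to make the contraction property essentially tautological, which is the more illuminating argument in this specific setting.
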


\begin{proof}
We first note that for $f, g \in W^{1,2}(X,m)$ 
\[
\frac{1}{4} ( \mathcal{E}_2 (f+g)+ \mathcal{E}_2 (f-g))=\int_{\mathcal S} (\partial f ) (\partial g )  d\nu
\]
is indeed a bilinear form. Then, $W^{1,2}(X,m)$ is dense in $L^2(X,m)$ and  $\mathcal{E}_2$ is closed in $L^2(X,m)$ from Proposition \ref{closedness p-energy}. Finally, if $u,v \in W^{1,2}(X,m)$ are such that for $x,y \in X$
\[
|u(x) -u(y)| \le |v(x) -v(y)|,
\]
 then, it easily follows that $\nu$-a.e. $| \partial u| \le |\partial v|$ so that $\mathcal{E}_2 (u) \le \mathcal{E}_2 (v)$. Therefore, we conclude that $\mathcal{E}_2 $ is a Dirichlet form and its regularity follows from Theorem \ref{p regularity}.
\end{proof}

\begin{proposition}
The Dirichlet form $\mathcal{E}_2$ admits a heat kernel $p_t(x,y)$  which satisfies for every $x,y,z \in X$, $d(y,z) \le \underline{\iota}$, $t>0$,
\begin{equation}\label{eq:Holder}
|p_t(x,y) -p_t(x,z)|^2\le \frac{d(y,z)}{t} p_{t}(x,x).  
\end{equation}
\end{proposition}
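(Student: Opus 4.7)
The plan is in two parts: first establish existence of the heat kernel via a Nash/ultracontractivity argument, then derive the H\"older bound by combining the intrinsic Morrey estimate on local trees (Theorem \ref{thm:Morrey}) applied to $y\mapsto p_t(x,y)$ with a spectral control on $\mathcal{E}_2(p_t(x,\cdot))$. For existence, the Nash inequality of Theorem \ref{Nash local tree} with $p=2$ can be reformulated, after squaring and absorbing the lower-order $\|f\|_{L^2}^2$ term via replacing $P_t$ with $e^{-t}P_t$, into the standard form $\|f\|_{L^2}^{2+4/d_h}\le C(\mathcal{E}_2(f)+\|f\|_{L^2}^2)\|f\|_{L^1}^{4/d_h}$ (and more generally, a $\Psi_2$-adapted version if one works with a general $\Phi$). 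By the classical Carlen--Kusuoka--Stroock procedure, this yields ultracontractivity of the semigroup, and hence the existence of a bounded, symmetric, measurable heat kernel $p_t(x,y)$ representing $P_t$.

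Next, fix $x\in X$ and $t>0$, and set $F:=p_t(x,\cdot)$. Since $(P_t)$ is analytic on $L^2(X,m)$ (as $-L$ is non-positive self-adjoint), the identity $F=P_{t/2}(p_{t/2}(x,\cdot))$ puts $F$ in the domain of the generator $L$, hence in $W^{1,2}(X,m)$; we work with its continuous $AC^2(X)$ representative. Since $d(y,z)\le\underline{\iota}$ places $z\in B(y,\underline{\iota})$, and this ball is a real tree by the uniform local tree assumption, Theorem \ref{thm:Morrey} with $p=2$ applied in $B(y,\underline{\iota})$ gives
\[
|p_t(x,y)-p_t(x,z)|^2\le d(y,z)\int_{]y,z[}|\partial F|^2\,d\nu \;\le\; d(y,z)\,\mathcal{E}_2(F).
\]

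It remains to show $\mathcal{E}_2(F)\le p_t(x,x)/t$, which follows from spectral theory. Writing $h:=p_{t/2}(x,\cdot)$ and using the spectral resolution $\{E_\lambda\}$ of $L$ together with the elementary bound $\sup_{\lambda\ge 0}\lambda\,e^{-t\lambda}\le 1/t$,
\[
\mathcal{E}_2(F)=\mathcal{E}_2(P_{t/2}h)=\int_0^\infty \lambda\,e^{-t\lambda}\,d\|E_\lambda h\|_{L^2}^2 \;\le\; \frac{1}{t}\|h\|_{L^2}^2 \;=\; \frac{p_t(x,x)}{t},
\]
where the last equality is the standard semigroup--symmetry identity $\|p_{t/2}(x,\cdot)\|_{L^2}^2=p_t(x,x)$. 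Combining the two displays yields the claimed estimate. The main obstacle I anticipate is ensuring that the heat kernel produced by ultracontractivity admits a continuous, indeed $AC^2$, representative on which pointwise estimates at \emph{every} pair $(y,z)$ with $d(y,z)\le \underline{\iota}$ make sense; this is handled by the analyticity argument, which places $p_t(x,\cdot)$ in $\mathrm{Dom}(L)\subset W^{1,2}(X,m)$ and thus, via Theorem \ref{thm:Morrey}, delivers the required continuous representative together with the H\"older bound.
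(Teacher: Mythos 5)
Your proof of the H\"older estimate is exactly the paper's argument: apply the local Morrey estimate (Theorem \ref{thm:Morrey}) with $p=2$ to $p_t(x,\cdot)$ and combine it with the spectral bound $\mathcal{E}_2(p_t(x,\cdot))=\mathcal{E}_2(P_{t/2}p_{t/2}(x,\cdot))\le \frac{1}{t}\|p_{t/2}(x,\cdot)\|_{L^2}^2=\frac{1}{t}p_t(x,x)$. The only (minor) divergence is in establishing existence of the kernel, where you invoke Nash/ultracontractivity while the paper simply notes that the form's domain embeds into continuous functions; both routes are sound, and your more explicit treatment of why $p_t(x,\cdot)$ has a continuous $AC^2$ representative is a welcome addition rather than a defect.
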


\begin{proof}
The existence of the heat kernel follows from the fact that the Dirichlet form $\mathcal{E}_2$ has a domain included in the space of continuous functions and the estimate \eqref{eq:Holder} follows from Theorem \ref{thm:Morrey} applied to $f=p_t(x,\cdot)$ and the estimate $\mathcal{E}_2( p_t(x,\cdot)) \le \frac{1}{t} \int_X p_{t/2}(x,z)^2 dm(z)=\frac{1}{t} p_t(x,x)$ which comes from spectral theory. 
\end{proof}

%{\color{red}{Fabrice: What follows needs to be changed}}

\begin{theorem}\label{thm:upper-Holder}
The Dirichlet form $\mathcal{E}_2$ admits a heat kernel $p_t(x,y)$  which satisfies for every $t>0$, $x,y \in X$
\begin{equation}\label{eq:on-diag}
p_t(x,y) \le \frac{C}{\Phi (\Psi_2^{-1}(t))\wedge 1}.
\end{equation}

\end{theorem}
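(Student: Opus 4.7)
My approach is the classical Nash--Moser route to on-diagonal heat kernel bounds, adapted to the generalized volume function $\Phi$ and localized to small scales. First I would derive a generalized Nash inequality from the Korevaar--Schoen characterization proved in Theorem~\ref{weak monotonicity Lp}, then integrate the resulting differential inequality for $\|P_t f\|_2^2$ to obtain ultracontractivity $\|P_t\|_{L^1 \to L^2}^2 \le C/\Phi(\Psi_2^{-1}(t))$, and finally convert this into the pointwise heat kernel estimate via the semigroup property and Cauchy--Schwarz.

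Concretely, for the averaging operator $\mathcal{M}_r f(x) := \frac{1}{m(B(x,r))}\int_{B(x,r)} f\,dm$ and $r \in (0,r_0]$, the uniform volume bound gives $\|\mathcal{M}_r f\|_\infty \le C\Phi(r)^{-1}\|f\|_1$, and since $\|\mathcal{M}_r f\|_1 \le \|f\|_1$ this yields $\|\mathcal{M}_r f\|_2^2 \le C\Phi(r)^{-1}\|f\|_1^2$. On the other hand, Jensen's inequality combined with the $p=2$ case of Theorem~\ref{weak monotonicity Lp} gives $\|f - \mathcal{M}_r f\|_2^2 \le \Psi_2(r)\, E_{2,\Psi_2}(f,r) \le C\Psi_2(r)\mathcal{E}_2(f)$. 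Adding the two estimates leads to the Nash-type inequality
\[
\|f\|_2^2 \le C\Psi_2(r)\mathcal{E}_2(f) + \frac{C}{\Phi(r)}\|f\|_1^2, \qquad r \in (0,r_0].
\]

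Next I would apply this to $u_t := P_t f$ with $\|f\|_1 = 1$. Setting $\phi(t) := \|u_t\|_2^2$, one has $\phi'(t) = -2\mathcal{E}_2(u_t)$, and choosing $r = r(t)$ so that $C\Phi(r)^{-1} = \phi(t)/2$ (which is admissible for $t$ small enough that $r \le r_0$) yields the differential inequality
\[
-\phi'(t) \ge \frac{\phi(t)}{C\,\Psi_2\bigl(\Phi^{-1}(2C/\phi(t))\bigr)}.
\]
Substituting $v(t) := \Phi^{-1}(2C/\phi(t))$, this reduces after a short computation to $v(t)\Phi'(v(t))\,v'(t) \ge 1/C$, and integrating gives $\int_0^{v(t)} s\,\Phi'(s)\,ds \ge t/C$. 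Using the two-sided scaling \eqref{volume assumption} on $\Phi$, an integration by parts shows that the left-hand side is comparable to $\Psi_2(v(t)) = v(t)\Phi(v(t))$ up to multiplicative constants depending only on $\alpha$ and $c$. Inverting gives $v(t) \ge c\,\Psi_2^{-1}(t)$ and hence $\phi(t) \le C'/\Phi(\Psi_2^{-1}(t))$ for $t \in (0,t_0]$.

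The bound on $\|P_t\|_{L^1 \to L^2}^2$ then translates into $p_t(x,x) \le C/\Phi(\Psi_2^{-1}(t))$ via $p_{2t}(x,x) = \int p_t(x,y)^2 dm(y) = \|P_t \delta_x\|_2^2$ interpreted through the semigroup identity $p_{2t}(x,x) = \|P_{t/2}^* p_{3t/2}(x,\cdot)\|_2^2$, and Cauchy--Schwarz extends this off the diagonal: $p_t(x,y) \le \sqrt{p_t(x,x)p_t(y,y)} \le C/\Phi(\Psi_2^{-1}(t))$. For $t \ge t_0$, the monotonicity of $t \mapsto p_t(x,x)^{1/2}$ in $t$ (from the spectral representation) and the small-time bound applied at $t_0$ give $p_t(x,y) \le C''$, producing the truncation by $\wedge\, 1$ in the statement. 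The main obstacle is the nonlinear ODE in Step~3: the composition $\Psi_2 \circ \Phi^{-1}$ prevents a direct integration, and one must lean on the two-sided scaling in \eqref{volume assumption} both to control $\int_0^{v} s\Phi'(s)ds$ from below by $\Psi_2(v)$ and to absorb the constant $c$ arising in $v(t) \ge c\Psi_2^{-1}(t)$ when passing back to $\phi(t)$.
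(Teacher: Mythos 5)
Your argument is correct in substance, but it takes a genuinely different route from the paper. You run the classical Nash--ultracontractivity scheme: a generalized Nash inequality $\|f\|_2^2\le C\Psi_2(r)\mathcal{E}_2(f)+C\Phi(r)^{-1}\|f\|_1^2$ obtained from the averaging operator and the easy direction of Theorem \ref{weak monotonicity Lp}, then integration of the resulting differential inequality for $\|P_tf\|_2^2$, then Dunford--Pettis and Cauchy--Schwarz. The paper instead avoids Nash inequalities entirely: it first notes that the kernel exists and is continuous (the domain of $\mathcal{E}_2$ embeds into continuous functions by the Morrey estimate, Theorem \ref{thm:Morrey}), records the pointwise H\"older bound $|p_t(x,y)-p_t(x,z)|^2\le \tfrac{d(y,z)}{t}p_t(x,x)$ of \eqref{eq:Holder} coming from $\mathcal{E}_2(p_t(x,\cdot))\le t^{-1}p_t(x,x)$, and combines it with the pigeonhole observation that $p_t(x,\cdot)\le m(B(x,r))^{-1}$ at some point of $B(x,r)$; this yields the quadratic inequality \eqref{eq:upper-OD1} for $p_t(x,x)$, solved explicitly by choosing $t=\Psi_2(r)$. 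The paper's proof is shorter, needs no iteration or ODE, and exploits the one-dimensional structure of local trees directly; your proof is the more robust, standard machine, and has the mild advantage of producing the kernel's existence as a by-product rather than assuming it. Your route does require some care at three points, none of which is a genuine gap: (a) $\Phi$ is only assumed increasing, so you cannot literally write $\Phi'$; the ODE step should be carried out in integral form or replaced by a discrete iteration over dyadic time scales, using \eqref{volume assumption} to compare $\Psi_2$ at consecutive scales; (b) the comparison you actually need is the trivial upper bound $\int_0^{v}s\,d\Phi(s)\le v\Phi(v)=\Psi_2(v)$ (so that $\Psi_2(v(t))\ge t/C$ forces $v(t)\ge \Psi_2^{-1}(t/C)\ge c\,\Psi_2^{-1}(t)$), not a lower bound as you state; and (c) the identity $p_{2t}(x,x)=\|P_{t/2}^*p_{3t/2}(x,\cdot)\|_2^2$ is miswritten, though the intended step ($\|P_{2t}\|_{L^1\to L^\infty}\le\|P_t\|_{L^1\to L^2}^2$ plus Dunford--Pettis) is standard.
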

\begin{proof}
    Let $x\in X$. For any ball $B(x,r)$ in $X$, one has $\int_{B(x,r)}p_t(x,y)dm(y)\le 1$. Hence there exists $y=y(t,r)\in B(x,r)$ such that $p_t(x,y)\le m(B(x,r))^{-1}$. In particular, the volume growth assumption  implies that for any $0<r<r_0$
    \[
    p_t(x,y)\le \frac{C}{\Phi(r)}.
    \]
    Now using this estimate, together with the H\"older regularity \eqref{eq:on-diag}, we obtain 
    \[
    \frac12 p_t(x,x)^2\le p_t(x,y)^2+|p_t(x,x)-p_t(x,y)|^2\le \frac{C^2}{\Phi(r)^2}+\frac{r}{t}p_t(x,x).
    \]
    The above quadratic equation gives that 
    \begin{equation}\label{eq:upper-OD1}
    p_t(x,x)\le \frac{r}{t}+\sqrt{\frac{r^2}{t^2}+\frac{C^2}{\Phi(r)^2}}.
    \end{equation}
    Now letting $t=\Psi_2(r)=r\Phi(r)$, we have that 
    \(
    p_{r\Phi(r)}(x,x)\le \frac{C_1}{\Phi(r)}
    \)
    and hence for any $t\le r_0\Phi(r_0)$ there holds
    \[
    p_t(x,x)\le \frac{C_1}{\Phi(\Psi_2^{-1}(t))}.
    \]
    On the other hand, the upper bound \eqref{eq:upper-OD1} holds for any $t>0$ by taking $r=r_0/2$, i.e., $p_t(x,x)\le C_{t}$ for a decreasing function $C_t$. We thus conclude that 
    \[
    p_t(x,x)\le \frac{C}{\Phi (\Psi_2^{-1}(t))\wedge 1}
    \]
    and by the semigroup property
    \[
    p_t(x,y)\le p_t(x,x)^{1/2}p_t(y,y)^{1/2}\le \frac{C}{\Phi (\Psi_2^{-1}(t))\wedge 1}, \quad \forall x,y\in X \text{ and } t>0.
    \]
\end{proof}
\begin{comment}
\begin{proof}
From Theorem \ref{Nash local tree} applied with $p=2$ one obtains that for every $f \in W^{1,2}(X,m)$
\[
\| f \|_{L^2(X,m)} \le C \left(\| f \|_{L^2(X,m)}+\mathcal{E}_2(f)^{1/2}\right)^{\theta} \| f \|^{1-\theta}_{L^1(X,m)}
\]
where $\theta=\frac{d_h}{1+2d_h}$. We replace $f$ by $P_t f$ in this inequality and get
\begin{align*}
\| P_t f \|_{L^2(X,m)}  \le C \left(\| P_t f \|_{L^2(X,m)}+\mathcal{E}_2(P_tf)^{1/2}\right)^{\theta} \|P_t f \|^{1-\theta}_{L^1(X,m)}.
\end{align*}
From spectral theory one has
\[
\mathcal{E}_2(P_tf)^{1/2} \le \frac{1}{\sqrt{t}}\| P_t f \|_{L^2(X,m)}
\]
and from $L^1$ contractivity $\|P_t f \|_{L^1(X,m)}\le \| f \|_{L^1(X,m)}$. Therefore we have
\[
\| P_t f \|_{L^2(X,m)}  \le C \left(1+\frac{1}{\sqrt{t}}\right)^{\theta} \| P_t f \|_{L^2(X,m)}^\theta\| f \|^{1-\theta}_{L^1(X,m)}.
\]
 This yields
\[
\| P_t f \|_{L^2(X,m)}  \le C \left(1+\frac{1}{\sqrt{t}}\right)^{\frac{d_h}{1+d_h}} \| f \|_{L^1(X,m)},
\]
which, by duality,  means that $P_t$ satisfies the ultracontractivity estimate
\[ \| P_t f \|_{L^\infty(X,m)}  \le C \left(1+\frac{1}{\sqrt{t}}\right)^{\frac{d_h}{1+d_h}} \| f \|_{L^2(X,m)}. \]
We conclude from \cite[Lemma 2.1.2]{Davies} that there is a  heat kernel $p_t(x,y)$  which satisfies for every $t>0$, $x,y \in X$, 
\[
p_t(x,y) \le \frac{C}{ t^{\frac{d_h}{d_h+1}}  \wedge 1}.
\]
The H\"older regularity estimate on $p_t(x,\cdot)$ follows from Theorem \ref{thm:Morrey}. 
%Note that this estimate can also be rewritten as
%\[
%|p_t(x,y) -p_t(x,z)|\le C\left(\frac{d(y,z)}{t^{1/d_w}}\right)^{1/2} p_{2t}(x,x).    
%\]
\end{proof}
\end{comment}

The proof for the following lemma is the same as in \cite[Section 4]{KumagaiPRIMS}. 
\begin{lemma}\label{lem:exit}
  For any $x\in X$ and $r\le\underline{\iota}$, it holds that 
  \[
  \mathbb E^x[T_{B(x,r)}]\simeq \Psi_2(r),
  \]
  where $T_A$ ($A\subset X$) denotes the first exit time from $A$, i.e., $T_A=\inf\{t\ge 0: X_t\notin A\}$.
\end{lemma}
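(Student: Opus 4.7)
The plan is to prove both directions of the two-sided bound $c\Psi_2(r)\le \mathbb{E}^x[T_{B(x,r)}]\le C\Psi_2(r)$ for $x\in X$ and $r\le\underline{\iota}$, by the probabilistic-analytic arguments of [KumagaiPRIMS, Section 4]. These derive such exit-time estimates purely from the on-diagonal heat-kernel upper bound of Theorem~\ref{thm:upper-Holder}, combined with the H\"older estimate \eqref{eq:Holder} and the volume doubling of $\Phi$ from \eqref{volume assumption}.

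\textbf{Upper bound.} The starting point is the inclusion $\{T_{B(x,r)}>t\}\subset\{X_t\in B(x,r)\}$, which together with Theorem~\ref{thm:upper-Holder} and the uniform volume control gives
\[
\mathbb{P}^x(T_{B(x,r)}>t)\le m(B(x,r))\sup_y p_t(x,y)\le C\,\frac{\Phi(r)}{\Phi(\Psi_2^{-1}(t))\wedge 1}.
\]
By \eqref{volume assumption}, for $t=C_1\Psi_2(r)$ with $C_1$ large enough the right-hand side is at most $1/2$. The Markov property applied at multiples of $t$ (using $B(x,r)\subset B(y,2r)$ whenever $y\in B(x,r)$ together with the doubling of $\Psi_2$) yields $\mathbb{P}^x(T_{B(x,r)}>nC_1\Psi_2(r))\le 2^{-n}$, and integrating the tail gives the upper bound.

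\textbf{Lower bound.} Setting
\[
q(r,t):=\sup_{x\in X}\mathbb{P}^x(T_{B(x,r)}\le t),
\]
I would decompose an exit event via the strong Markov property at $T_{B(x,r/2)}$: a path exiting $B(x,r)$ before time $t$ but lying in $B(x,r/2)$ at time $t$ must exit and return, which forces a spatial displacement of order $r/2$ in a short time. Estimating this return probability through the on-diagonal bound at an intermediate scale together with \eqref{eq:Holder} produces a self-improving inequality of the form $q(r,t)\le \eta(r/4,t)/(1-\eta(r/4,t))$ (with $\eta(\rho,t):=\sup_{y,s\le t}\mathbb{P}^y(d(X_s,y)\ge\rho)$) where $\eta$ is itself handled by iterating the same decomposition at dyadic scales. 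This bootstrap shows $q(r,c\Psi_2(r))\le 1/2$ for some small $c>0$, whence $\mathbb{E}^x[T_{B(x,r)}]\ge c\Psi_2(r)\,(1-q(r,c\Psi_2(r)))\ge c'\Psi_2(r)$.

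\textbf{Main obstacle.} The hard part is the lower bound: at this stage of the paper no off-diagonal heat-kernel bounds are yet available, so the only quantitative smallness input for the return probability is the on-diagonal bound of Theorem~\ref{thm:upper-Holder} combined with the H\"older estimate \eqref{eq:Holder}. The bootstrap must therefore be set up carefully at dyadic spatial scales, and it is precisely the uniform doubling of $m$ and $\Psi_2$, together with the tree structure within $B(x,\underline{\iota})$ (which prevents the process from returning through a different branch), that ensures the iteration converges at the desired scale $\Psi_2(r)$. This is the content of [KumagaiPRIMS, Section 4] and adapts verbatim to the present local-tree setting.
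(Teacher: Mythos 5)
Your upper bound is essentially sound and is a legitimate alternative to the paper's route: Theorem \ref{thm:upper-Holder} is established independently of this lemma, so combining the inclusion $\{T_{B(x,r)}>t\}\subset\{X_t\in B(x,r)\}$ with the on-diagonal bound, the doubling of $\Phi$ and $\Psi_2$, and an iteration of the Markov property does yield $\mathbb{E}^x[T_{B(x,r)}]\le C\Psi_2(r)$ (modulo routine care with the ``$\wedge\,1$'' in \eqref{eq:on-diag} when $r$ is comparable to $\underline{\iota}$).

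The lower bound, however, has a genuine gap. Your bootstrap needs, as its base input, that $\eta(\rho,s):=\sup_{y,\,u\le s}\mathbb{P}^y(d(X_u,y)\ge\rho)$ is bounded away from $1$ for $s\simeq\Psi_2(\rho)$, i.e.\ that $\int_{B(y,\rho)}p_u(y,z)\,dm(z)\ge c>0$. No \emph{upper} bound on the heat kernel can deliver this: upper bounds prevent mass from concentrating but not from escaping $B(y,\rho)$ entirely. The estimate that would give it via \eqref{eq:Holder} is the on-diagonal \emph{lower} bound of Lemma \ref{lem:on-diagLB}, which the paper deduces \emph{from} the present lemma, so invoking it here is circular; and the chain $q(r,t)\lesssim\eta(r/2,t)\le q(r/2,t)$ iterates in the wrong direction (exit probabilities from smaller balls are larger), so ``iterating the decomposition at dyadic scales'' cannot close. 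Likewise, the chaining lemma behind \eqref{eq:exit-prob} takes the single-scale bound $\mathbb{P}^x(T_{B(x,b)}\le s)\le c_1+c_2s/\Psi_2(b)$ as input, and that bound is itself a consequence of the present lemma. The argument of \cite[Section 4]{KumagaiPRIMS} that the paper defers to is not a heat-kernel bootstrap but a resistance-form computation: $\mathbb{E}^x[T_B]=\int_B g_B(x,y)\,dm(y)$ with $g_B(x,x)=R_{\mathrm{eff}}(x,B(x,r)^c)\simeq r$ on a tree (upper bound because a single geodesic of length $r$ reaches $\partial B$; lower bound because metric doubling bounds the number of disjoint escape branches, as in the proof of Lemma \ref{bump function}), together with $|g_B(x,x)-g_B(x,y)|\le\sqrt{d(x,y)\,g_B(x,x)}$, which gives $g_B(x,y)\ge c\,r$ on $B(x,\delta r)$ and hence $\mathbb{E}^x[T_B]\ge c\,r\,m(B(x,\delta r))\simeq\Psi_2(r)$. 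That Green-function/effective-resistance mechanism is the ingredient missing from your proposal.
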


\begin{theorem}
For $t>0$, $x \in X$ and $r < \underline{\iota}$,
\begin{equation}\label{eq:exit-prob}
\mathbb P^x (T_{B(x,r)}\le t) \le C_1\exp\left(-C_2\frac{r}{\Phi^{-1}(t/r)} \right).
\end{equation}
Therefore, there exist two constants $C_1, C_2>0$ such that for any $0<t \le 1$ and $d(x,y)\le \underline{\iota}/2$, 
\[
p_t(x,y) \le \frac{C_1}{\Phi (\Psi_2^{-1}(t))} \exp \left(-C_2 \frac{d(x,y)}{\Phi^{-1} \left( t/d(x,y) \right)} \right).
\]    
\end{theorem}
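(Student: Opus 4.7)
The statement has two parts, which I would prove in sequence following the strategy of \cite{KumagaiPRIMS}. First I would establish the exit probability bound \eqref{eq:exit-prob} by iteration from the mean exit time estimate of Lemma \ref{lem:exit}, and then derive the off-diagonal heat kernel estimate by combining \eqref{eq:exit-prob} with the on-diagonal bound of Theorem \ref{thm:upper-Holder} via a Chapman–Kolmogorov splitting.

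\textit{Exit probability.} The starting point is Lemma \ref{lem:exit}, which gives $\mathbb{E}^y[T_{B(y,s)}] \simeq \Psi_2(s)$ uniformly in $y \in X$ and $s \le \underline{\iota}$. Applying the strong Markov property at time $t$, together with the elementary bound $\mathbb{E}^z[T_{B(y,s)}] \le \mathbb{E}^z[T_{B(z,2s)}] \le C\Psi_2(s)$ for $z \in B(y,s)$, yields
\[
c\Psi_2(s) \le \mathbb{E}^y[T_{B(y,s)}] \le t + C\Psi_2(s)\, \mathbb{P}^y(T_{B(y,s)} > t),
\]
so that $q(s,t) := \sup_y \mathbb{P}^y(T_{B(y,s)} \le t) \le 1 - \delta$ for some $\delta \in (0,1)$ whenever $t \le c_0 \Psi_2(s)$. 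Next, I would chain along the sequence of stopping times $\tau_0 = 0$, $\tau_{k+1} = \inf\{u \ge \tau_k : d(X_u, X_{\tau_k}) \ge s\}$. The triangle inequality forces $\tau_{\lceil r/s\rceil} \le T_{B(x,r)}$, and iterating the strong Markov property gives $\mathbb{P}^x(T_{B(x,r)} \le t) \le q(s,t)^{\lceil r/s\rceil} \le (1-\delta)^{\lceil r/s\rceil}$ under the scale constraint $t \le c_0\Psi_2(s)$. A single-scale optimization of $s$ produces only the weaker exponent $r/\Psi_2^{-1}(t)$; to reach the Gaussian-type exponent $r/\Phi^{-1}(t/r)$ in the statement, I would exploit concentration of the sum $\sum_{k=1}^{n}(\tau_{k+1} - \tau_k)$ around its mean via a Chernoff/Laplace-transform estimate on the i.i.d.-like exit-time increments, as carried out in \cite[Section 4]{KumagaiPRIMS}.

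\textit{Heat kernel bound, and main obstacle.} With \eqref{eq:exit-prob} in hand, set $R = d(x,y)$ and decompose via Chapman–Kolmogorov,
\[
p_t(x,y) = \int_X p_{t/2}(x,z)\, p_{t/2}(y,z)\, dm(z).
\]
For every $z$, either $d(z,x) \ge R/2$ or $d(z,y) \ge R/2$, so we split the integral accordingly. On the set $\{d(z,x) \ge R/2\}$, bound
\[
\int_{\{d(z,x)\ge R/2\}} p_{t/2}(x,z)\,p_{t/2}(y,z)\,dm(z) \le \sup_{w\in X} p_{t/2}(y,w)\, \mathbb{P}^x\!\left(X_{t/2}\notin B(x,R/2)\right),
\]
using Theorem \ref{thm:upper-Holder} to control the supremum by $C/\Phi(\Psi_2^{-1}(t))$ and the inclusion $\{X_{t/2}\notin B(x,R/2)\} \subset \{T_{B(x,R/2)}\le t/2\}$ together with \eqref{eq:exit-prob} to control the probability by $C_1\exp(-C_2 R/\Phi^{-1}(t/R))$ (up to harmless adjustments of constants using \eqref{volume assumption}). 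The symmetric piece $\{d(z,y) \ge R/2\}$ is handled identically. The hard part of the proof is the exit probability: upgrading from the single-scale chaining bound $\exp(-cr/\Psi_2^{-1}(t))$ to the correct Gaussian-type exponent $\exp(-cr/\Phi^{-1}(t/r))$, which is strictly stronger in the off-diagonal regime $r \gg \Psi_2^{-1}(t)$, requires a sub-exponential tail estimate on individual exit times combined with a Chernoff-type concentration bound on their sum, rather than a mere product bound on short-exit events. Once \eqref{eq:exit-prob} is available, the deduction of the heat kernel estimate via the Chapman–Kolmogorov splitting above is essentially routine.
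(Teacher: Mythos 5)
Your proposal is correct and follows essentially the same route as the paper, whose proof simply cites \cite[Lemma 4.2]{KumagaiPRIMS} for the exit-probability bound (valid because $B(x,r)$ is a tree for $r<\underline{\iota}$) and the argument of \cite[Theorem 3.11]{BarlowLectures} for the Chapman--Kolmogorov step. Your identification of the Chernoff-type concentration on the chained exit times (rather than a single-scale product bound) as the key point needed to reach the exponent $r/\Phi^{-1}(t/r)$ matches the details that the paper omits.
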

\begin{proof}
When $r<\underline{\iota}$, the ball $B(x,r)$ is a tree. Thus \eqref{eq:exit-prob} holds from \cite[Lemma 4.2]{KumagaiPRIMS}, see Example 1 in \cite[Section 5]{KumagaiPRIMS}.

Now in view of \eqref{eq:exit-prob}, together with Assumption (A1) and \eqref{eq:on-diag}, the  argument in \cite[Theorem 3.11]{BarlowLectures} yields that for any $0<t\le 1$ and $d(x,y)\le \underline{\iota}$
\[
p_t(x,y) \le \frac{C_1}{\Phi (\Psi_2^{-1}(t))} \exp \left(-C_2 \frac{d(x,y)}{\Phi^{-1} \left( t/d(x,y) \right)} \right).
\]
We skip here the details for the sake of conciseness.
\end{proof}
It remains to show the near diagonal  lower estimate \eqref{NDLB} for the heat kernel. We begin with the following on-diagonal lower bound. 
\begin{lemma}\label{lem:on-diagLB}
There exist two constant $c,C>0$ and $t_0>0$ (depending on $\underline{\iota}, r_0, d_h$) such that for $0<t<t_0$ 
    \begin{equation}\label{eq:on-diagLB}
    p_t(x,x)\ge \frac{C}{\Phi(\Psi_2^{-1}(t))}.
    \end{equation}
\end{lemma}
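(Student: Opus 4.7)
The plan is to combine the exit time estimate (Lemma~\ref{lem:exit}) with a Cauchy--Schwarz/semigroup argument, following a strategy that is standard in the heat kernel literature on fractals.

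First, I would convert the first moment bound $\mathbb{E}^x[T_{B(x,r)}] \simeq \Psi_2(r)$ into a \emph{probability} bound of the form
\[
\mathbb{P}^x\bigl(T_{B(x,r)} > t\bigr) \ge c_0 > 0
\quad \text{whenever } t \le c_1 \Psi_2(r),\; r \le \underline{\iota}/2.
\]
To do this, write $T := T_{B(x,r)}$ and split
\[
\mathbb{E}^x[T] = \mathbb{E}^x[T \wedge t] + \mathbb{E}^x[(T-t)_+]
\le t + \mathbb{P}^x(T>t)\cdot \sup_{z \in B(x,r)} \mathbb{E}^z[T_{B(x,r)}],
\]
where the last inequality uses the strong Markov property at time $t$. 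Since $B(x,r) \subset B(z,2r)$ for $z \in B(x,r)$, Lemma~\ref{lem:exit} gives $\mathbb{E}^z[T_{B(x,r)}] \le \mathbb{E}^z[T_{B(z,2r)}] \le C\Psi_2(2r) \le C'\Psi_2(r)$ by doubling of $\Psi_2$. Using also the lower bound $\mathbb{E}^x[T] \ge c\Psi_2(r)$ from Lemma~\ref{lem:exit} and rearranging yields $\mathbb{P}^x(T>t) \ge (c\Psi_2(r)-t)/(C'\Psi_2(r))$, which is bounded below by a positive constant as soon as $t \le (c/2)\Psi_2(r)$.

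Next, choose $r = A\,\Psi_2^{-1}(t)$ with $A$ a fixed large constant so that $\Psi_2(r) \ge (2/c)t$; this is possible because the reverse doubling $\Psi_2(\lambda s) \ge C^{-1}\lambda^2 \Psi_2(s)$ (cf.\ the estimate on $\Psi_p$ stated just after its definition) makes $\Psi_2(A\Psi_2^{-1}(t))/t$ as large as we like. Provided $t$ is small enough that $r \le \underline{\iota}/2 \wedge r_0$, the probability bound above gives $\mathbb{P}^x(X_t \in B(x,r)) \ge \mathbb{P}^x(T>t) \ge c_0$. In other words,
\[
\int_{B(x,r)} p_t(x,y)\, dm(y) \ge c_0 .
\]
From here the Cauchy--Schwarz inequality and the semigroup identity give
\[
c_0^2 \le \Bigl(\int_{B(x,r)} p_t(x,y)\,dm(y)\Bigr)^2 \le m(B(x,r)) \int_{X} p_t(x,y)^2 \,dm(y) = m(B(x,r))\, p_{2t}(x,x),
\]
whence, using the uniform volume estimate $m(B(x,r)) \le C\Phi(r) \le C A^\alpha \Phi(\Psi_2^{-1}(t))$,
\[
p_{2t}(x,x) \ge \frac{c_0^2}{C A^\alpha}\cdot \frac{1}{\Phi(\Psi_2^{-1}(t))}.
\]
Relabelling $2t \mapsto t$ and absorbing constants, together with the doubling of $\Phi \circ \Psi_2^{-1}$, yields the claim for all $t \in (0, t_0]$ with $t_0$ depending only on $\underline{\iota}, r_0$ and $d_h$.

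The main obstacle is the conversion from the first-moment exit time bound to the uniform lower bound on $\mathbb{P}^x(T>t)$: naive applications of Markov's inequality go the wrong way, so the strong Markov/doubling argument above is essential. Everything else -- the Cauchy--Schwarz step and the volume substitution -- is routine once the probability bound is in hand.
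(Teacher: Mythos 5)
Your argument is correct and follows essentially the same route as the paper: convert the exit-time information into a lower bound on $\mathbb{P}^x(X_t\in B(x,r))$ for $r$ with $\Psi_2(r)\simeq t$, then apply Cauchy--Schwarz with the semigroup identity and the volume estimate. The only difference is that you spell out the derivation of the bound $\mathbb{P}^x(T_{B(x,r)}\le t)\le c_1+c_2t/\Psi_2(r)$ from the mean exit time estimate via the Markov property, a step the paper simply quotes from Lemma 4.4; this is a welcome extra detail, not a deviation.
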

\begin{proof}
    We first note from Lemma \ref{lem:exit} that for any $r\le \underline{\iota}/2$, there exists $0<c_1<1$ and $c_2>0$ such that 
    \[
    \mathbb P^x(X_t\notin B(x,r))\le \mathbb P^x(T_{B(x,r)}\le t)\le c_1+c_2t/\Psi_2(r).
    \]
    Now choose $r$ such that $\Psi_2(r)\simeq t$ and $c_1+c_2 t/\Psi_2(r)=c_3<1$. Indeed, this is possible as long as  $t<c_4\Psi_2(\underline{\iota}/2)$ for a small constant $c_4>0$. Then we have 
    \[
     \mathbb P^x(X_t\in B(x,r))\ge 1-c_3.
    \]
    By the Cauchy-Schwarz inequality, one has 
    \[
    (1-c_3)^2\le \mathbb P^x(X_t\in B(x,r))^2=\left(\int_{B(x,r)}p_t(x,z)dm(z)\right)^2\le \mu(B(x,r)) p_{2t}(x,x).
    \]
    Finally from the choice of $t$ and the uniform local Ahlfors regularity assumption, we conclude that there exist constant $c$ (depending on $c_1, c_2$) and $C>0$ (depending on $c_1, c_2$) such that for $0<t<t_0:=c\Psi_2(\min\{\underline{\iota}, r_0\})$, 
    \[
    p_t(x,x)\ge \frac{C}{\Phi(\Psi_2^{-1}(t))}. 
    \]
\end{proof}

We are now ready to prove \eqref{NDLB}.

\begin{theorem}
Let $t_0$ be as in Lemma \ref{lem:on-diagLB}.  There exists constants $c_1,c_2>0$ such that if  $\Psi_2(c_2d(x,y) )\le t \le t_0$,
\begin{align}
p_t(x,y) \ge \frac{c_1}{\Phi (\Psi_2^{-1}(t))}. 
\end{align}
\end{theorem}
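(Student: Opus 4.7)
The plan is to derive the near-diagonal lower bound by combining the on-diagonal lower bound from Lemma \ref{lem:on-diagLB} with the Hölder regularity estimate \eqref{eq:Holder} via a standard perturbation argument: if $d(x,y)$ is small enough compared to the natural length scale $\Psi_2^{-1}(t)$, then $p_t(x,y)$ differs from $p_t(x,x)$ by at most half the on-diagonal lower bound, hence inherits a lower bound of the same order.

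More concretely, fix $t\in(0,t_0]$ and $x,y\in X$. Applying \eqref{eq:Holder} with $z=x$ (which requires $d(x,y)\le \underline{\iota}$) and the reverse triangle inequality gives
\begin{equation}
p_t(x,y) \;\ge\; p_t(x,x) \;-\; \sqrt{\tfrac{d(x,y)}{t}\, p_t(x,x)}.
\end{equation}
By Lemma \ref{lem:on-diagLB}, $p_t(x,x) \ge C_{\mathrm{LB}}/\Phi(\Psi_2^{-1}(t))$, so in order to conclude $p_t(x,y) \ge \tfrac{1}{2}p_t(x,x)$ it is enough to arrange
\begin{equation}
\tfrac{d(x,y)}{t} \;\le\; \tfrac{1}{4}\, p_t(x,x),
\end{equation}
and, in view of the on-diagonal lower bound, a sufficient condition is
\begin{equation}
\tfrac{d(x,y)}{t} \;\le\; \tfrac{C_{\mathrm{LB}}}{4\,\Phi(\Psi_2^{-1}(t))}.
\end{equation}
Writing $r=\Psi_2^{-1}(t)$, i.e. $t=r\Phi(r)$, this simplifies to $d(x,y)\le \tfrac{C_{\mathrm{LB}}}{4}\,r$, so choosing $c_2:=4/C_{\mathrm{LB}}$ (or any larger constant) the hypothesis $\Psi_2(c_2 d(x,y))\le t$ — equivalent to $c_2 d(x,y)\le \Psi_2^{-1}(t)$ by monotonicity of $\Psi_2$ — implies the required inequality.

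Combining these, we obtain $p_t(x,y)\ge \tfrac12 p_t(x,x) \ge c_1/\Phi(\Psi_2^{-1}(t))$ with $c_1 = C_{\mathrm{LB}}/2$. The admissibility of the Hölder step is automatic: since $t\le t_0$ and $t_0$ is chosen in Lemma \ref{lem:on-diagLB} so that $\Psi_2^{-1}(t_0)\le \min\{\underline\iota,r_0\}$, the constraint $c_2 d(x,y)\le \Psi_2^{-1}(t)$ forces $d(x,y)\le \underline\iota$ (enlarging $c_2$ if necessary to make this true without losing anything). I don't expect real obstacles here — the argument is a direct triangle-inequality perturbation from the on-diagonal bound, and the only delicate point is bookkeeping the constants so that the threshold $c_2$ is compatible simultaneously with the domain of validity of \eqref{eq:Holder} and with the sharpened form of the on-diagonal lower bound provided by Lemma \ref{lem:on-diagLB}.
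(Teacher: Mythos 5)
Your argument is correct and is essentially the paper's proof: both combine the H\"older estimate \eqref{eq:Holder} (with $z=x$) with the on-diagonal lower bound of Lemma \ref{lem:on-diagLB} to show $p_t(x,y)\ge\tfrac12 p_t(x,x)$ once $d(x,y)\lesssim\Psi_2^{-1}(t)$. The only cosmetic difference is that the paper routes the perturbation term through the on-diagonal upper bound of Theorem \ref{thm:upper-Holder} before invoking the lower bound, whereas you compare $d(x,y)/t$ directly with $\tfrac14 p_t(x,x)$, which is marginally leaner but the same argument.
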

\begin{proof}
Observing that for $t$ small enough (as in Lemma \ref{lem:on-diagLB}) and for $x,y \in X$ such that $d(x,y)<\underline{\iota}$, 
%the H\"older regularity estimate \eqref{eq:Holder} and the on-diagonal lower bound \eqref{eq:on-diagLB} imply that
Theorem \ref{thm:upper-Holder} and Lemma \ref{lem:on-diagLB} imply that
\begin{align*}
|p_t(x,y) -p_t(x,x)|
&\le C\left(\frac{d(x,y)}{t}\right)^{1/2} \frac{1}{\Phi (\Psi_2^{-1}(t))^{1/2}}
\le C\left(\Phi (\Psi_2^{-1}(t)) \frac{d(x,y)}{t}\right)^{1/2} p_{t}(x,x).    
\end{align*}   
Taking $\Psi_2(ad(x,y) )\le t$ for $a$ small enough, we obtain again from Lemma \ref{lem:on-diagLB} that 
\begin{equation}\label{eq:near-diagLB}
p_t(x,y)\ge \frac12 p_t(x,x) \ge \frac{c}{\Phi (\Psi_2^{-1}(t))}.    
\end{equation}
\end{proof}

\subsection{Heat kernel characterization of the Sobolev and BV spaces}

We now give yet another characterization of the Sobolev and BV spaces which shows that the theory of heat kernel based Besov classes developed in \cite{BV2,BV3,BV1} can therefore be applied in the context of local trees. The main ingredients of this characterization are the heat kernel estimates \eqref{NDLB} and \eqref{ERE}.  For $f \in L^p(X,m)$ and $t >0$ we denote 
\[
N_{p} (f,t)=\frac{1}{\Psi_p(\Psi_2^{-1}(t))} \int_X \int_X |f(x)-f(y)|^p p_t(x,y) dm(x)dm(y)
\]
and the heat kernel based Besov class is defined by
\[
\mathcal{B}^{p}(X,m)=\left\{ f \in L^p(X,m) \mid \limsup_{t \to 0} N_{p} (f,t) <+\infty  \right\}.
\]

We have then the following theorem.

\begin{theorem}

\

\begin{enumerate}
\item $\mathcal{B}^{1}(X,m)=BV(X,m)$ and there exist constants $c,C>0$ such that for every $f \in BV(X,m)$
\[
c \int_\mathcal{S} |\partial f | d\nu \le \sup_{t \in (0,t_0]} N_{1} (f,t) \le C \left( \| f \|_{L^1(X,m)} +\| \partial f \|(X)  \right).
\]
\item For $p >1$, $\mathcal{B}^{p}(X,m)=W^{1,p}(X,m)$ and there exist constants $c,C>0$ such that for every $f \in W^{1,p} (X,m)$
\[
c \int_\mathcal{S} |\partial f |^p d\nu \le \sup_{t \in (0,t_0]} N_{p} (f,t) \le C \left( \| f \|_{L^p(X,m)} + \int_\mathcal{S} |\partial f |^p d\nu \right).
\]

\end{enumerate}
\end{theorem}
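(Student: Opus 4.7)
The plan is to compare, at matched length scale $r=\Psi_2^{-1}(t)$, the heat-kernel functional $N_p(f,t)$ with the Korevaar--Schoen functional $E_{p,\Psi_p}(f,r)$, and then invoke Theorem \ref{weak monotonicity Lp} (for $p>1$) or Theorem \ref{characterization BV} (for $p=1$) to translate the resulting bounds into the Sobolev or BV seminorms. The normalisations match naturally: the on-diagonal heat kernel scale $1/\Phi(r)$ combined with the $1/\Psi_p(r)$ prefactor in $N_p$ reconstructs precisely the weight $1/(\Psi_p(r)\,m(B(x,r)))$ that defines $E_{p,\Psi_p}$.

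For the lower bound I would first choose $t_0$ small enough that $\Psi_2^{-1}(t)/c_2\le\underline{\iota}$ for every $t\in(0,t_0]$, and restrict the double integral defining $N_p(f,t)$ to pairs $(x,y)$ with $d(x,y)\le r/c_2$. On this set the near-diagonal estimate \eqref{NDLB} gives $p_t(x,y)\ge c_1/\Phi(r)$, which immediately yields $N_p(f,t)\ge c\,E_{p,\Psi_p}(f,r/c_2)$. Passing to $\liminf_{t\to 0}$ and invoking the rightmost inequality of Theorem \ref{weak monotonicity Lp} (resp.\ Theorem \ref{characterization BV}) then produces the desired lower bound for $\sup_t N_p(f,t)$.

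For the upper bound I would split $X\times X$ according to the distance into a near zone $\{d(x,y)<r\}$, dyadic annuli $A_k=\{2^k r\le d(x,y)<2^{k+1}r\}$ for $0\le k\le K$ with $K$ the largest index such that $2^{K+1}r\le\underline{\iota}/2$, and a tail $\{d(x,y)\ge\underline{\iota}/2\}$. On the near zone and on each annulus, applying the sub-Gaussian upper bound \eqref{UB} together with the polynomial control on $\Phi$ and the volume identity $m(B(x,\rho))\simeq\Phi(\rho)$ yields an estimate of the form $C\,2^{k(p-1+2\alpha)}\exp(-c\,2^{k(1+1/\alpha)})\,E_{p,\Psi_p}(f,2^{k+1}r)$ for the $k$-th annular contribution to $N_p(f,t)$, with the near zone giving simply $C\,E_{p,\Psi_p}(f,r)$. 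The super-exponential factor dominates the polynomial one, so summing in $k$ and using weak monotonicity of $E_{p,\Psi_p}$ (Theorem \ref{weak monotonicity Lp} or Theorem \ref{characterization BV}) bounds the total by $\int_\mathcal{S}|\partial f|^p\,d\nu$, resp.\ $\|\partial f\|(X)$. On the tail, the crude split $|f(x)-f(y)|^p\le 2^{p-1}(|f(x)|^p+|f(y)|^p)$ together with the escape rate \eqref{ERE} at radius $\underline{\iota}/2$ gives, after dividing by $\Psi_p(r)$, a term uniformly bounded by $C\|f\|_{L^p}^p$ on $(0,t_0]$, since the super-exponential decay of $\exp(-c\,\underline{\iota}/\Phi^{-1}(2t/\underline{\iota}))$ as $t\to 0$ absorbs the polynomial blow-up of $1/\Psi_p(r)$.

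The main technical obstacle I anticipate is the annular bookkeeping: using only the uniform polynomial control $\Phi(R)/\Phi(r)\in[cR/r,C(R/r)^\alpha]$, one must verify quantitatively that the residual polynomial factor in $k$ arising from the scalings of $\Phi$ and $\Psi_p$ between $r$ and $2^{k+1}r$ is dominated by the super-exponential decay provided by the exponent $d(x,y)/\Phi^{-1}(t/d(x,y))$ of \eqref{UB}, with constants uniform in $t\in(0,t_0]$. Once this summation is under control, the two matched inequalities, together with the KS characterizations, yield both the identifications $\mathcal{B}^1(X,m)=BV(X,m)$ and $\mathcal{B}^p(X,m)=W^{1,p}(X,m)$ for $p>1$ and the claimed two-sided seminorm bounds; the $L^1$-relaxation definition of BV poses no extra difficulty, since $N_1(f,t)$ and $E_{1,\Psi_1}(f,r)$ are defined for every $f\in L^1$ and Theorem \ref{characterization BV} supplies the translation to $\|\partial f\|(X)$.
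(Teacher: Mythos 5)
Your proposal is correct and follows essentially the same route as the paper: the lower bound via the near-diagonal estimate \eqref{NDLB} at scale $r=\Psi_2^{-1}(t)$ followed by the Korevaar--Schoen characterization, and the upper bound via a near/far splitting in which the far zone is handled by the escape-rate estimate \eqref{ERE} and the near zone by a dyadic annular decomposition where the sub-Gaussian decay of \eqref{UB} absorbs the polynomial factors coming from $\Phi$ and $\Psi_p$. The only differences are cosmetic (the paper decomposes a fixed ball $B(y,r)$ into inward dyadic shells $2^{-k}r\le d<2^{1-k}r$ rather than outward annuli from scale $\Psi_2^{-1}(t)$), and the summability issue you flag as the main obstacle is exactly the point the paper settles with its auxiliary functions $\varphi$ and $\tilde{\varphi}$.
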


\begin{proof}
We do the proof for $p=1$, the proof for $p>1$ being similar. We first have for $0<t\le t_0$, from the near diagonal lower bound \eqref{NDLB}
\begin{align*}
N_{1} (f,t) &=\frac{1}{\Psi_1(\Psi_2^{-1}(t))} \int_X \int_X |f(x)-f(y)| p_t(x,y) dm(x)dm(y) \\
 & \ge \frac{1}{\Psi_1(\Psi_2^{-1}(t))} \int_X \int_{B(y,C\Psi_2^{-1}(t))} |f(x)-f(y)| p_t(x,y) dm(x)dm(y) \\
 &\ge \frac{c}{\Psi_1(\Psi_2^{-1}(t)) \Phi (\Psi_2^{-1}(t))} \int_X \int_{B(y,C\Psi_2^{-1}(t))} |f(x)-f(y)| dm(x)dm(y) \\
 &\ge c E_{1}(f, C\Psi_2^{-1}(t)).
\end{align*}
Therefore, from Theorem \ref{characterization BV} one has
\[
\sup_{t \in (0,t_0]} N_{1} (f,t) \ge C\| \partial f \|(X) .
\]
We now turn to the proof of the upper bound for $N_{1} (f,t)$ where $0<t \le t_0$. Let $0<r<\underline{\iota}/2$. We decompose
\[
\int_X \int_X |f(x)-f(y)| p_t(x,y) dm(x)dm(y)=A(t)+B(t)
\]
where $$A(t)=\int_X \int_{X \setminus B(y,r)} |f(x)-f(y)| p_t(x,y) dm(x)dm(y)$$ and $$B(t)=\int_X \int_{ B(y,r)} |f(x)-f(y)| p_t(x,y) dm(x)dm(y).$$ We first estimate $A(t)$ using \eqref{ERE}:
\begin{align*}
A(t) & \le 2 \int_X\left(  \int_{X \setminus B(y,r)}p_t(x,y) dm(x) \right)  |f(y)|  dm(y) \\
 & \le C_1 \exp \left(-C_2 \frac{r}{\Phi^{-1} \left( t/r \right)} \right) \| f \|_{L^1(X,m)}.
\end{align*}
We then estimate $B(t)$ using \eqref{UB}:
\begin{align*}
B(t) & \le \frac{C_1}{\Phi (\Psi_2^{-1}(t))}  \int_X \int_{ B(y,r)} |f(x)-f(y)|  \exp \left(-C_2 \frac{d(x,y)}{\Phi^{-1} \left( t/d(x,y) \right)} \right) dm(x)dm(y) \\
 &\le \frac{C_1}{\Phi (\Psi_2^{-1}(t))} \sum_{k=1}^{+\infty}  \int_X \int_{ B(y,2^{1-k}r)\setminus B(y,2^{-k} r)} |f(x)-f(y)|  \exp \left(-C_2 \frac{d(x,y)}{\Phi^{-1} \left( t/d(x,y) \right)} \right) dm(x)dm(y) \\
 & \le  \frac{C_1}{\Phi (\Psi_2^{-1}(t))} \sum_{k=1}^{+\infty}  \int_X \int_{ B(y,2^{1-k}r)\setminus B(y,2^{-k} r)} |f(x)-f(y)|  \exp \left(-C_2 \frac{2^{-k} r }{\Phi^{-1} \left( 2^{k} t/r \right)} \right) dm(x)dm(y) \\
 & \le \frac{C_1}{\Phi (\Psi_2^{-1}(t))}    \sum_{k=1}^{+\infty} \exp \left(-C_2 \frac{2^{-k} r }{\Phi^{-1} \left( 2^{k} t/r \right)} \right)    \int_X \int_{ B(y,2^{1-k}r)} |f(x)-f(y)| dm(x)dm(y) \\
 & \le \frac{C_1}{\Phi (\Psi_2^{-1}(t))}    \sum_{k=1}^{+\infty} \exp \left(-C_2 \frac{2^{-k} r }{\Phi^{-1} \left( 2^{k} t/r \right)} \right) \Psi_1(2^{1-k}r) \Phi (2^{1-k}r)    E_{1,\Psi_1}(f,2^{1-k}r) \\
  & \le \frac{C_1}{\Phi (\Psi_2^{-1}(t))}  \left(  \sum_{k=1}^{+\infty} \exp \left(-C_2 \frac{2^{-k} r }{\Phi^{-1} \left( 2^{k} t/r \right)} \right) \Psi_1(2^{1-k}r) \Phi (2^{1-k}r) \right) \sup_{\rho \in (0,r]}    E_{1,\Psi_1}(f,\rho)
 \end{align*}
 Using  \eqref{volume assumption}, denoting $\varphi(x)=\max \{ x^\alpha,  x \}$ one then sees that
 
\begin{align*}
 & \sum_{k=1}^{+\infty} \exp \left(-C_2 \frac{2^{-k} r }{\Phi^{-1} \left( 2^{k} t/r \right)} \right) \Psi_1(2^{1-k}r) \Phi (2^{1-k}r)\\
 =& \sum_{k=1}^{+\infty} \exp \left(-C_2 \frac{2^{-k} r }{\Phi^{-1} \left( 2^{k} t/r \right)} \right) \Psi_1(2^{1-k}r)^2 \\
 \le & C_3 \Psi_1 (\Psi_2^{-1}(t))^2 \sum_{k=1}^{+\infty} \exp \left(-C_2 \frac{2^{-k} r }{\Phi^{-1} \left( 2^{k} t/r \right)} \right) \varphi \left(2^{1-k}\frac{r}{\Psi_2^{-1}(t)}\right)^2 \\
 \le &C_4 \Psi_1 (\Psi_2^{-1}(t))^2,
\end{align*}
 where for the last inequality, we used the fact that $\frac{t}{\Phi\circ\Psi_2^{-1}(t)}=\Psi_2^{-1}(t)$ and the inequality
 \[
 \exp \left(-C_2 \frac{2^{-k} r }{\Phi^{-1} \left( 2^{k} t/r \right)} \right)=\exp \left(-C_2 \frac{t }{\Psi_2\circ \Phi^{-1} \left( 2^{k} t/r \right)} \right)\le \exp \left(-C_3  \tilde{\varphi} \left(2^{1-k}\frac{r}{\Psi_2^{-1}(t)}\right) \right)
 \]
 where $ \tilde{\varphi}= \min \{ x^{2/\alpha},  x^{1+\alpha} \}$. We thus have:
 \[
 \frac{C_1}{\Phi (\Psi_2^{-1}(t))}  \left(  \sum_{k=1}^{+\infty} \exp \left(-C_2 \frac{2^{-k} r }{\Phi^{-1} \left( 2^{k} t/r \right)} \right) \Psi_1(2^{1-k}r) \Phi (2^{1-k}r) \right) \le C_3 \Psi_1 (\Psi_2^{-1}(t)).
 \]
 Therefore, one has 
 \begin{align*}
 & \int_X \int_X |f(x)-f(y)| p_t(x,y) dm(x)dm(y) \\
 \le &  C_1 \exp \left(-C_2 \frac{r}{\Phi^{-1} \left( t/r \right)} \right) \| f \|_{L^1(X,m)} +C_3 \Psi_1 (\Psi_2^{-1}(t)) \sup_{\rho \in (0,r]}    E_{1,\Psi_1}(f,\rho).
\end{align*}
Using Theorem \ref{characterization BV} this yields
\[
\sup_{t \in (0,t_0]} N_{1} (f,t) \le C \left( \| f \|_{L^1(X,m)} +\| \partial f \|(X)  \right).
\]
\end{proof}

\subsection{Local heat kernel gradient estimates}

We say that the locally Lipshitz regular condition for harmonic functions holds if there exist $C>0$, $A>1$ such that for any ball $B(x_0,r)$ with $r<\underline{\iota}/(6A)$, for any $u\in W^{1,2}(X,m)$ which is harmonic in $B(x_0,Ar)$, for $m$-a.e. $x,y\in B(x_0,r)$ with $x\ne y$, we have
\begin{equation*}
\frac{|u(x)-u(y)|}{d(x,y)}\le \frac{C}{r}\dashint_{B(x_0,Ar)}|u|d m.
\end{equation*}

By the local tree property, the above condition holds as follows.

\begin{lemma}
The locally Lipshitz regular condition holds.
\end{lemma}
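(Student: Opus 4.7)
The plan is to exploit the tree structure of $B(x_0,Ar)$ to first establish a pointwise bound on $|\partial u|$ on $B(x_0,2r)$, and then integrate it along the unique geodesic joining $x$ and $y$. Since $r<\underline{\iota}/(6A)$ we have $Ar<\underline{\iota}$, so $B(x_0,Ar)$ is a real tree; in particular any two of its points are joined by a unique geodesic, which must lie inside $B(x_0,Ar)$.

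The first step is to extract the structural description of harmonic functions on the tree from the weak equation $\int_{\mathcal{S}}\partial u\,\partial\phi\,d\nu=0$ (valid for all $\phi\in W^{1,2}(X,m)$ supported in $B(x_0,Ar)$). Testing against $\phi$ supported in a short subsegment of the skeleton avoiding all branching points forces $\partial u$ to be locally constant there, so $u$ is affine along each maximal edge of the skeleton of $B(x_0,Ar)$. Testing against bump functions of the type constructed in Lemma \ref{bump function} concentrated near a branching vertex yields a Kirchhoff-type balance condition for the outgoing slopes at that vertex.

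The second step is to control the sup norm of $u$ by its $L^1$ average. The maximum principle (immediate from the affine-plus-Kirchhoff description above) combined with a Harnack-type comparison for harmonic functions on a tree with a uniformly locally doubling measure gives $\sup_{B(x_0,2r)}|u|\le C\dashint_{B(x_0,Ar)}|u|\,dm$. Combining this with the piecewise affine structure and iterating Kirchhoff's balance condition—which allows one to bound slopes on short edges in terms of slopes on larger adjacent ones, the latter being controlled by oscillation over the large edge divided by its length—one obtains the pointwise gradient bound
\[
\mathrm{ess\,sup}_{\mathcal{S}\cap B(x_0,2r)}|\partial u|\le \frac{C}{r}\dashint_{B(x_0,Ar)}|u|\,dm.
\]

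The final step is to integrate the gradient bound along the unique geodesic $[x,y]\subset B(x_0,2r)$: by Lemma \ref{absolute continuity},
\[
|u(x)-u(y)|\le \int_{]x,y[}|\partial u|\,d\nu\le d(x,y)\cdot\mathrm{ess\,sup}_{\mathcal{S}\cap B(x_0,2r)}|\partial u|\le \frac{C\,d(x,y)}{r}\dashint_{B(x_0,Ar)}|u|\,dm.
\]
The main obstacle is the pointwise gradient bound: the Morrey inequality from Theorem \ref{thm:Morrey} alone only yields H\"older (not Lipschitz) regularity, so one genuinely needs the tree property here. The delicate point is propagating slope bounds across branching vertices when the incident edges have widely varying lengths, which is where the Kirchhoff relations together with the uniform doubling (and hence uniformly bounded branching degree) enter in an essential way.
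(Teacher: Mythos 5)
Your overall architecture --- use the tree structure and Kirchhoff's law to obtain a Lipschitz-scale bound for $u$ and then integrate $|\partial u|$ along the unique geodesic $[x,y]$ --- is the same as the paper's, but the two steps on which everything hinges are asserted rather than proved, and the first of them is exactly where the paper's proof does its real work. The inequality $\sup_{B(x_0,2r)}|u|\le C\dashint_{B(x_0,Ar)}|u|\,dm$ is a mean-value inequality for a \emph{sign-changing} harmonic function: it does not follow from the maximum principle, and no Harnack inequality is available at this point of the paper (deriving one from the heat kernel bounds, or running a Moser iteration off the Nash inequality, would be a substantial detour you do not carry out). The paper avoids ever needing the full sup bound: in the notation of Lemma \ref{bump function} it only has to control the single number $\max_n|u(y_n)|$, where $y_n\in[x,x_n]$, $d(x,y_n)=\varepsilon/2$ and $S_\varepsilon=\{x_1,\dots,x_N\}$ with $N$ uniformly bounded. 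After normalizing so that $u(x_1)\ge u(y_1)=\max_n|u(y_n)|\ge0$, the maximum principle gives $u\ge u(y_1)$ on $[x_1,y_1]$; a pigeonhole argument on the at most $N$ branch points $c(x_n,x_1,y_1)$ produces a subsegment of $[x_1,y_1]$ of length at least $\varepsilon/(2N)$, and on the ball $B(z,\varepsilon/(4N))$ around its midpoint one still has $u\ge u(y_1)$ by the maximum principle, so the uniform volume control yields $u(y_1)\lesssim\dashint_{B(x,3\varepsilon)}|u|\,dm$. This localization of a region of definite measure where $|u|$ dominates the relevant maximum is the key idea, and it is missing from your proposal.

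Your second asserted step --- propagating slope bounds across branch points by ``iterating Kirchhoff'' --- is also not something you can carry out on a general local tree: within $B(x_0,Ar)$ the skeleton need not decompose into finitely many edges (branch points may accumulate, as in the Vicsek-type examples), so there is no finite vertex--edge structure over which to iterate, and Kirchhoff's balance at a vertex only bounds one outgoing slope by the sum of the others, which by itself does not exclude large slopes on clusters of short edges. The paper sidesteps this entirely by working with the finite set $S_\varepsilon$ of macroscopic directions: for $y\in B(x,\varepsilon/2)$ the value $u(y)$ is a convex combination of $u(y_1),\dots,u(y_N)$, which directly gives the two-point bound $|u(y^{(1)})-u(y^{(2)})|\le C\varepsilon^{-1}\max_n|u(y_n)|\,d(y^{(1)},y^{(2)})$ with no essential supremum of $|\partial u|$ and no edge-by-edge slope analysis. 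To repair your argument you would need either to prove the $L^\infty$--$L^1$ mean-value inequality in this generality or, better, to replace your step 2 by the paper's direct control of $\max_n|u(y_n)|$.
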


\begin{proof}
Let $x\in X$, $\varepsilon<\varepsilon_0$, $R_{\varepsilon}$, $S_{\varepsilon}$ be given as in the proof of Lemma \ref{bump function}. Write $S_{\varepsilon}=\{x_1,\ldots,x_N\}$, where $N$ is some positive integer independent of $x$, $\varepsilon$.

Let $u$ be a harmonic function in $B(x,3\varepsilon)$. Consider $u$ in $B(x,\varepsilon)$ with boundary value on $R_{\varepsilon}$, then the value of $u$ in $B(x,\varepsilon)$ is uniquely determined by its value on $S_{\varepsilon}$. For any $n=1,\ldots,N$, let $y_n\in[x,x_n]$ satisfy $d(x,y_n)=\varepsilon/2$. Note that it is possible that $y_n=y_m$ for $n\ne m$. By the Kirchhoff's law, for any $y\in B(x,\varepsilon/2)$, $u(y)$ is a convex linear combination of $u(y_1)$, \ldots, $u(y_N)$, hence for any $y^{(1)},y^{(2)}\in B(x,\varepsilon/2)$, we have
$$\frac{|u(y^{(1)})-u(y^{(2)})|}{d(y^{(1)},y^{(2)})}\lesssim\frac{1}{\varepsilon}\max\{|u(y_n)|:n=1,\ldots,N\}.$$
Without loss of generality, we may assume that $u(y_1)=|u(y_1)|=\max\{|u(y_n)|:n=1,\ldots,N\}$, $y_1=y_2=\ldots=y_M$ and $y_n\ne y_1$ for any $n>M$, where $M\le N$. By the maximum principle, we may assume that $u(x_1)\ge u(y_1)\ge0$, hence $u\ge u(y_1)$ on $[x_1,y_1]$. Since $u$ is affine between adjacent points in $\{c(x_n,x_1,y_1):n=1\ldots,M\}$, there exist adjacent points $z^{(1)},z^{(2)}$ in this set such that $d(z^{(1)},z^{(2)})\ge d(x_1,y_1)/M\ge \varepsilon/(2N)$. Let $z$ be the midpoint of $z^{(1)},z^{(2)}$, since $u(z^{(1)})\ge u(y_1)$, $u(z^{(2)})\ge u(y_1)$, by the maximum principle, we have $u\ge u(y_1)$ in $B(z,d(z^{(1)},z^{(2)})/2)\supseteq B(z,\varepsilon/(4N))$. Hence by the uniform volume control, for any $y^{(1)},y^{(2)}\in B(x,\varepsilon/2)$, we have
$$\frac{|u(y^{(1)})-u(y^{(2)})|}{d(y^{(1)},y^{(2)})}\lesssim\frac{1}{\varepsilon}u(y_1)\le\frac{1}{\varepsilon}\dashint_{B(z,\varepsilon/(
4N))}u dm\lesssim\frac{1}{\varepsilon}\dashint_{B(x,3\varepsilon)}|u| dm,$$
that is, the locally Lipshitz regular condition holds.
\end{proof}

Then following the same argument as in the proof of \cite[Theorem 1.1]{DRY23} or \cite[``(1)$\Rightarrow$(4)" of Theorem 2.1]{gao2024holderregularityharmonicfunctions}, we have the local heat kernel gradient estimates as follows.

\begin{theorem}
There exist constants $C_1,C_2>0$ such that for any $0<t\le1$, for any $x\in X$, for $\nu$-a.e. $y\in X$ with $d(x,y)<\underline{\iota}/6$, we have
$$|\partial_yp_t(x,y)|\le\frac{C_1}{t}\exp \left(-C_2 \frac{d(x,y)}{\Phi^{-1} \left( t/d(x,y) \right)} \right).$$
\end{theorem}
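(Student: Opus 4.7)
My plan is to combine the locally Lipschitz regular condition for harmonic functions just established with the sub-Gaussian upper bound \eqref{UB}, mirroring the scheme used in \cite[Theorem 1.1]{DRY23} and the implication ``(1)$\Rightarrow$(4)'' of \cite[Theorem 2.1]{gao2024holderregularityharmonicfunctions}. I would fix $x \in X$, $t \in (0,1]$ and $y_0 \in X$ with $R := d(x,y_0) < \underline{\iota}/6$, and split the analysis into two regimes according to whether $R \le \Psi_2^{-1}(t)$ (near-diagonal) or $R > \Psi_2^{-1}(t)$ (off-diagonal), in each case applying the Lipschitz estimate on a well-chosen ball $B(y_0, A r)$.

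The first step is the elliptic-to-caloric passage. Since $z \mapsto p_t(x,z)$ is not harmonic, I would decompose it on $B(y_0, A r)$ as $p_t(x,\cdot) = h + w$, where $h$ solves the Dirichlet problem with the same boundary values and $w$ vanishes on $\partial B(y_0, A r)$ and satisfies $\Delta w = \partial_t p_t(x,\cdot)$ in the weak sense. Applying the locally Lipschitz regular condition to $h$ would yield
\[
|\partial_y h(y_0)| \le \frac{C}{r} \dashint_{B(y_0, Ar)} |p_t(x, z)| \, dm(z),
\]
while the remainder $w$ will be controlled by a Morrey-type argument that uses the uniform volume estimate together with the standard spectral bound $|\partial_t p_t(x, z)| \le C t^{-1} \sup_{s \in [t/2, 2t]} p_s(x, z)$.

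In the second step I would optimize the scale, picking $r = c\, \Psi_2^{-1}(t)$ when $R \le \Psi_2^{-1}(t)$ and $r = c R$ when $R > \Psi_2^{-1}(t)$, with $c$ small enough that $B(y_0, Ar)$ stays inside the uniform tree radius, and, in the off-diagonal regime, inside $B(x, 2R) \setminus B(x, R/2)$. Inserting the on-diagonal bound \eqref{eq:on-diag} in the first regime and the sub-Gaussian bound \eqref{UB} in the second, and using the identity $\Psi_2^{-1}(t)\, \Phi(\Psi_2^{-1}(t)) = t$, both the harmonic and residual contributions should be bounded by
\[
\frac{C_1}{t} \exp\!\left(-C_2 \frac{R}{\Phi^{-1}(t/R)}\right),
\]
the exponential factor being of order $1$ in the near-diagonal regime and providing the sub-Gaussian decay in the off-diagonal regime.

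The hard part will be the elliptic-to-caloric passage itself: the Lipschitz estimate strictly speaking applies only to genuinely harmonic functions, whereas $p_t(x, \cdot)$ is merely caloric. The Dirichlet-data decomposition above is standard in smooth settings, but running it rigorously here requires care because the length measure $\nu$ can be mutually singular with respect to $m$ and is the object carrying the gradient. The references \cite{DRY23,gao2024holderregularityharmonicfunctions} perform essentially this construction in closely related local-tree and fractal contexts, so my proof should reduce to transcribing their scheme while checking that the uniform local tree property, the uniform volume control, and the heat kernel estimates of Section~\ref{Heat kernel estimates} supply exactly the structural hypotheses that their argument invokes.
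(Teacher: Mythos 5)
Your proposal follows essentially the same route as the paper: the paper's own proof consists precisely of establishing the locally Lipschitz regular condition for harmonic functions (the preceding lemma) and then invoking the argument of \cite[Theorem 1.1]{DRY23} and \cite[(1)$\Rightarrow$(4) of Theorem 2.1]{gao2024holderregularityharmonicfunctions}, which is exactly the caloric-to-elliptic decomposition and scale optimization you outline. Your sketch correctly identifies the needed ingredients (the Lipschitz estimate for the harmonic part, the time-derivative bound for the remainder, and the choice $r \simeq \Psi_2^{-1}(t)$ versus $r \simeq d(x,y)$), so it matches the paper's intended proof.
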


\section{Global results}

In this last section we deal with the case of a tree with global volume estimates. Namely,  we consider a  locally compact and unbounded tree $(X,d)$. Observe that from Lemma 5.7 in \cite{Kig95} such tree is automatically separable and by Lemma 5.9 in \cite{Kig95} the complete and bounded subsets are compact, i.e. $(X,d)$ is proper. We consider as before a Radon measure $m$ with full support and an increasing function   $\Phi:[0,+\infty) \to[0,+\infty)$  such that

$$c\frac{R}{r}\le\frac{\Phi(R)}{\Phi(r)} \le C \left(\frac{R}{r}\right)^\alpha \text{ for any }r\le R, $$
where $\alpha \ge 1$. Of course all of our previous results apply to this case but we show here how some of them can be improved.

\subsection{Statements of  results}

We first observe that we  recover the T. Kumagai's heat kernel estimate on trees proved in  \cite[Proposition 5.1]{KumagaiPRIMS}.

\begin{theorem}\label{heat kernel global}
The heat kernel $p_t(x,y)$ on $X$ satisfies the following estimates: For every $t >0$ and  $x,y \in X$
\begin{align*}
\frac{C_1}{\Phi (\Psi_2^{-1}(t))} \exp \left(-C_2 \frac{d(x,y)}{\Phi^{-1} \left( t/d(x,y) \right)} \right)\le p_t(x,y) \le \frac{C_3}{\Phi (\Psi_2^{-1}(t))} \exp \left(-C_4 \frac{d(x,y)}{\Phi^{-1} \left( t/d(x,y) \right)} \right).  \label{UB}
\end{align*}
\end{theorem}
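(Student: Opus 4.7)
Since $(X,d)$ is a global tree with $m(B(x,r))\simeq\Phi(r)$ holding for all $r>0$, the two parameters $r_0$ and $\underline{\iota}$ appearing in Section~\ref{Heat kernel estimates} may be taken equal to $+\infty$. My plan is to observe that the upper bound and the near--diagonal lower bound from Section~\ref{Heat kernel estimates} now hold unconditionally, and then to upgrade the near--diagonal lower bound into the full sub--Gaussian lower bound by a standard Chapman--Kolmogorov chaining along the (unique) geodesic joining $x$ to $y$.

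First, the upper bound
\[
p_t(x,y)\le \frac{C_3}{\Phi(\Psi_2^{-1}(t))}\exp\left(-C_4\frac{d(x,y)}{\Phi^{-1}(t/d(x,y))}\right)
\]
follows for all $t>0$ and all $x,y\in X$ directly from Theorem~4.3: the hypotheses $d(x,y)\le\underline{\iota}/2$ and $t\le t_0$ become vacuous in the global tree setting. Similarly, the near--diagonal lower bound of Section~4.1 takes the unrestricted form $p_t(x,y)\ge c_1/\Phi(\Psi_2^{-1}(t))$ whenever $\Psi_2(c_2 d(x,y))\le t$, with constants $c_1,c_2>0$ independent of $t$ and of the points.

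For the off--diagonal lower bound, fix $x,y\in X$ with $d(x,y)=r$ and fix $t>0$. I would choose an integer
\[
n\simeq \frac{r}{\Phi^{-1}(t/r)},
\]
which is precisely calibrated so that $\Psi_2(r/n)\simeq t/n$, i.e. so that each elementary step falls into the near--diagonal regime. Pick points $x=z_0,z_1,\dots,z_n=y$ equally spaced along the geodesic $[x,y]$ and balls $B_i=B(z_i,\eta r/n)$ for a small constant $\eta>0$. By the semigroup property applied $n-1$ times,
\[
p_t(x,y)\ge \int_{B_1}\cdots\int_{B_{n-1}}\prod_{i=0}^{n-1}p_{t/n}(w_i,w_{i+1})\,dm(w_1)\cdots dm(w_{n-1}),
\]
where $w_0=x$, $w_n=y$. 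For $\eta$ small enough, $d(w_i,w_{i+1})\le (1+2\eta)r/n$, so each factor $p_{t/n}(w_i,w_{i+1})$ is bounded below by $c_1/\Phi(\Psi_2^{-1}(t/n))$ thanks to the near--diagonal lower bound. Combining with the volume estimate $m(B_i)\simeq \Phi(\eta r/n)$ and using the doubling comparability between $\Phi(\Psi_2^{-1}(t/n))$ and $\Phi(r/n)$ afforded by \eqref{volume assumption}, the product collapses to a bound of the shape $\Phi(\Psi_2^{-1}(t))^{-1}\exp(-Cn)$, which upon substituting the value of $n$ is precisely the desired lower bound.

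The main technical obstacle will be the bookkeeping in this chaining estimate: one has to verify that the $n$-fold product of the constants $c_1$, the $(n-1)$-fold product of volumes $\Phi(\eta r/n)$, and the $n$-fold factor $\Phi(\Psi_2^{-1}(t/n))^{-1}$ recombine, via the upper doubling inequality $\Phi(R)/\Phi(r)\le C(R/r)^\alpha$ and the relation $t/n\simeq\Psi_2(r/n)$, into the clean prefactor $\Phi(\Psi_2^{-1}(t))^{-1}$ and a genuine exponential $\exp(-Cn)$ with the correct $n$. The global tree hypothesis is used essentially here to guarantee that the geodesic $[x,y]$ exists, is unique, and that each intermediate ball $B_i$ still satisfies the uniform volume comparison without any small--scale restriction, so that Chapman--Kolmogorov can be iterated an arbitrary number of times.
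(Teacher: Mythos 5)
Your proposal is correct and follows essentially the same route as the paper: the authors likewise note that the upper bound and near-diagonal lower bound from Section 4.1 carry over without scale restrictions, and then invoke the classical chaining argument for geodesic spaces (citing the proof of Theorem 3.1 in Coulhon--Grigor'yan type references) to obtain the full sub-Gaussian lower bound. The only difference is that you sketch the Chapman--Kolmogorov iteration and the calibration $n\simeq r/\Phi^{-1}(t/r)$ explicitly, whereas the paper delegates this to the cited reference.
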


\begin{proof}
It follows from the arguments from Section \ref{Heat kernel estimates} that the heat kernel  satisfies for every $t >0$ and  $x,y \in X$ the upper bound
\begin{align*}
p_t(x,y) \le \frac{C_3}{\Phi (\Psi_2^{-1}(t))} \exp \left(-C_4 \frac{d(x,y)}{\Phi^{-1} \left( t/d(x,y) \right)} \right).  
\end{align*}
and the near-diagonal estimates: For $\Psi_2(c_2d(x,y) )\le t \le t_0$,
\begin{align*}
p_t(x,y) \ge \frac{c_1}{\Phi (\Psi_2^{-1}(t))}. 
\end{align*}
Since the metric space $(X,d)$ is geodesic because it is a tree, we deduce by a classical chaining argument, see for instance the proof of \cite[Theorem 3.1]{CoulhonOff}, that the heat kernel also satisfies for every $t >0$ and  $x,y \in X$ the lower bound
\begin{align*}
p_t(x,y) \ge \frac{C_3}{\Phi (\Psi_2^{-1}(t))} \exp \left(-C_4 \frac{d(x,y)}{\Phi^{-1} \left( t/d(x,y) \right)} \right).  
\end{align*}

\end{proof}

By the two-sided heat kernel estimates in Theorem \ref{heat kernel global}, applying \cite[Theorem 2.1]{gao2024holderregularityharmonicfunctions} directly, we have pointwise gradient bound of the heat kernel as follows.

\begin{theorem}
There exist constants $C_1,C_2>0$ such that for any $t>0$, for any $x\in X$, for $\nu$-a.e. $y\in X$, we have
$$|\partial_yp_t(x,y)|\le\frac{C_1}{t}\exp \left(-C_2 \frac{d(x,y)}{\Phi^{-1} \left( t/d(x,y) \right)} \right).$$
\end{theorem}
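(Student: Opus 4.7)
The plan is to apply \cite[Theorem 2.1]{gao2024holderregularityharmonicfunctions} directly, exactly as suggested by the authors. That result shows that, on a metric measure Dirichlet space enjoying two-sided sub-Gaussian heat kernel bounds governed by $\Phi$ and $\Psi_2$, the pointwise gradient estimate for $p_t(x,\cdot)$ is equivalent to the global Lipschitz regularity of harmonic functions. Thus the work reduces to verifying the two hypotheses of that theorem in the present setting.

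First I would observe that the two-sided heat kernel bounds are already provided by Theorem \ref{heat kernel global} of this section, which upgrades the local estimates of Section 4 to global ones using the fact that a tree is a geodesic space (via the chaining argument of \cite{CoulhonOff}). Second, I would verify that the global Lipschitz regularity condition for harmonic functions holds on $(X,d,m)$ at every scale $r>0$, not merely for $r<\underline{\iota}/(6A)$. The key point is that in the globally tree-like setting, every ball $B(x_0,r)\subset X$ is itself a tree, so the Kirchhoff-law/maximum-principle argument from the proof of the preceding lemma (with the sets $R_\varepsilon$, $S_\varepsilon$ and the combinatorially controlled number of branches) applies verbatim at arbitrary scale. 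Combined with the global uniform volume control \eqref{volume assumption}, this yields
\[
\frac{|u(x)-u(y)|}{d(x,y)}\le \frac{C}{r}\dashint_{B(x_0,Ar)}|u|\,dm
\]
for every $r>0$, every harmonic $u$ on $B(x_0,Ar)$, and $m$-a.e. $x,y\in B(x_0,r)$.

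With both hypotheses verified, \cite[Theorem 2.1]{gao2024holderregularityharmonicfunctions} produces the desired gradient estimate, namely the $C_1/t$ prefactor together with the matching exponential decay factor $\exp(-C_2 d(x,y)/\Phi^{-1}(t/d(x,y)))$, valid now for all $t>0$ (not only $t\le 1$) and $\nu$-a.e.\ $y\in X$ (without the restriction $d(x,y)<\underline{\iota}/6$ present in the local version).

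The main potential obstacle is simply a bookkeeping one: ensuring that the exact formulation in \cite{gao2024holderregularityharmonicfunctions} admits a general scaling function $\Phi$ satisfying the doubling/growth condition \eqref{volume assumption} rather than a pure power $\Phi(r)=r^{d_h}$. Since $\Psi_2(r)=r\Phi(r)$ inherits doubling and quantitative scaling from \eqref{volume assumption}, and since the cited theorem is stated in precisely this level of generality, no additional argument is required beyond invoking it. The non-trivial analytic input is already contained in the two-sided heat kernel bounds of Theorem \ref{heat kernel global} and the tree-level Lipschitz regularity of harmonic functions.
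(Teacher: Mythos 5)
Your proposal follows essentially the same route as the paper: the paper's own proof consists of the single observation that the two-sided heat kernel estimates of Theorem \ref{heat kernel global} allow one to apply \cite[Theorem 2.1]{gao2024holderregularityharmonicfunctions} directly. Your additional verification that the Lipschitz regularity of harmonic functions persists at all scales in the global tree setting is a reasonable filling-in of a hypothesis the paper leaves implicit (having established it only locally in the preceding subsection), but it does not change the argument.
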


The results below follow from straightforward modifications of the arguments we used before. The proofs are therefore   let to the reader.

\begin{theorem}\label{weak monotonicity Lp global tree}

\

\begin{enumerate}
\item $KS^{\alpha_1,\Psi_1}(X,m)=\mathcal{B}^{1}(X,m)=BV(X,m)$ and there exist constants $c_1,c_2,c_3,c_4>0$ such that for every $f \in BV(X,m)$
\[
c_1 \sup_{r >0 }E_{1,\Psi_1} (f,r)  \le \| \partial f \|(X) \le c_2\liminf_{r  \to 0 }E_{1,\Psi_1} (f,r) ,
\]
and
\[
c_3 \sup_{t >0} N_{1} (f,t) \le  \| \partial f \|(X)  \le  c_4 \liminf_{t  \to 0} N_{1} (f,t).
\]

\item For $p >1$, $KS^{\alpha_p,\Psi_p}(X,m)=\mathcal{B}^{p}(X,m)=W^{1,p}(X,m)$ and there exist constants $c_1,c_2,c_3,c_4>0$ such that for every $f \in W^{1,p} (X,m)$
\[
c_1 \sup_{r >0 }E_{p,\Psi_p} (f,r)  \le \int_\mathcal{S} |\partial f |^p d\nu \le c_2\liminf_{r  \to 0 }E_{p,\Psi_p} (f,r) ,
\]
and
\[
c_3 \sup_{t >0} N_{p} (f,t) \le  \int_\mathcal{S} |\partial f |^p d\nu \le  c_4 \liminf_{t  \to 0} N_{p} (f,t).
\]

\end{enumerate}\end{theorem}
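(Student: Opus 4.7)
The plan is to revisit the proofs of Theorem \ref{weak monotonicity Lp}, Theorem \ref{characterization BV}, and the heat kernel characterization of Section 4, noting that in the global tree setting three structural improvements let every estimate previously restricted to small scales pass to all scales: (a) $\iota(x) = +\infty$ for every $x$, so the Morrey estimate of Theorem \ref{thm:Morrey} and the weak-gradient identity \eqref{eq:weakGradient2} hold between any two points; (b) the comparison $m(B(x,r)) \simeq \Phi(r)$ is now global, yielding uniform doubling at every scale; (c) the two-sided heat kernel bounds of Theorem \ref{heat kernel global} are valid for every $t > 0$.

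First I would upgrade the Korevaar--Schoen half. The upper bound computation in the proof of Theorem \ref{weak monotonicity Lp} relied only on the Morrey inequality applied to pairs $x,y$ with $y \in B(x,r)$ and on a bounded-overlap cover by balls of radius $r$; both ingredients are now available for every $r > 0$, so one obtains $E_{p,\Psi_p}(f,r) \le C\int_\mathcal{S} |\partial f|^p\, d\nu$ and hence $\sup_{r>0} E_{p,\Psi_p}(f,r) \le C\,\mathcal{E}_p(f)$. For the reverse inequality I would rerun the discrete-convolution argument, constructing the partition of unity of Theorem \ref{partition unity} at arbitrary scale $\varepsilon > 0$ to produce $f_\varepsilon \in W^{1,p}(X,m)$ with $\mathcal{E}_p(f_\varepsilon) \le C\, E_{p,\Psi_p}(f, 12\varepsilon)$; lower semi-continuity of $\mathcal{E}_p$ then gives $\mathcal{E}_p(f) \le C \liminf_{r \to 0} E_{p,\Psi_p}(f,r)$. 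The $BV$ statement follows along the lines of Theorem \ref{characterization BV} via $L^1$-relaxation.

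For the heat-kernel half I would mirror the proof of the heat-kernel characterization theorem in Section 4.2. The lower bound uses only the near-diagonal lower estimate $p_t(x,y) \ge c/\Phi(\Psi_2^{-1}(t))$ on $d(x,y) \le C\Psi_2^{-1}(t)$, which by Theorem \ref{heat kernel global} now holds for all $t > 0$; combined with the Korevaar--Schoen lower bound proved in step one, this yields $\sup_{t>0} N_p(f,t) \ge c\,\mathcal{E}_p(f)$ (and the $BV$ analogue). For the upper bound one splits $\int\!\!\int |f(x)-f(y)|^p\, p_t(x,y)\,dm\,dm$ into a far-field piece controlled by the escape rate estimate and a dyadically decomposed near piece handled by the global sub-Gaussian upper bound; the geometric-series argument at the end of Section 4.2 goes through verbatim, but now uniformly in $t > 0$.

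The main technical point to verify is that the dyadic sum
\[
\sum_{k=1}^{+\infty} \exp\!\Bigl(-C_2\,\tfrac{2^{-k} r}{\Phi^{-1}(2^k t/r)}\Bigr)\,\Psi_p(2^{1-k}r)\,\Phi(2^{1-k}r) \;\le\; C\,\Psi_p(\Psi_2^{-1}(t))^2
\]
holds with a constant independent of the scale $r$; this is precisely guaranteed by the scaling hypothesis \eqref{volume assumption} being imposed on all of $[0,+\infty)$ rather than only on $[0,r_0]$. With this uniformity in hand, every constant in the previous arguments becomes scale-invariant and all $\sup_{r \in (0,r_0]}$ and $\sup_{t \in (0,t_0]}$ may be replaced by $\sup_{r > 0}$ and $\sup_{t > 0}$, completing the proof.
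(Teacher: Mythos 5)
Your proposal is correct and follows exactly the route the paper intends: Section 5 explicitly leaves these proofs to the reader as ``straightforward modifications'' of Theorems \ref{weak monotonicity Lp}, \ref{characterization BV} and the heat kernel characterization, and you have correctly identified the three structural inputs (unbounded injectivity radius, global volume control, global two-sided kernel bounds from Theorem \ref{heat kernel global}) that let every estimate pass to all scales.

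One step is not quite ``verbatim'', and it is worth flagging because it is the only place where the global statement is genuinely sharper than its local counterpart. In Section 4.2 the upper bound obtained is $\sup_{t\le t_0} N_p(f,t)\le C\bigl(\|f\|_{L^p(X,m)}^p+\mathcal{E}_p(f)\bigr)$, the $L^p$ term coming from the far-field piece $A(t)\le C_1\exp\bigl(-C_2\,r/\Phi^{-1}(t/r)\bigr)\|f\|_{L^1(X,m)}$. The global theorem asserts $c_3\sup_{t>0}N_p(f,t)\le\mathcal{E}_p(f)$ with no $L^p$ term, so repeating the splitting as you describe does not directly yield the stated inequality. The fix is immediate in the global setting: for fixed $t$ the near-field bound $B(t)\le C\,\Psi_p(\Psi_2^{-1}(t))\sup_{\rho\in(0,r]}E_{p,\Psi_p}(f,\rho)\le C\,\Psi_p(\Psi_2^{-1}(t))\,\mathcal{E}_p(f)$ holds with a constant independent of the cutoff radius $r$ (precisely because $\sup_{\rho>0}E_{p,\Psi_p}(f,\rho)\le C\mathcal{E}_p(f)$ and the scaling of $\Phi$ are now global), while $r/\Phi^{-1}(t/r)\to\infty$ as $r\to\infty$, so the far-field term vanishes in that limit; equivalently one can run the dyadic decomposition over all annuli of $X$ at once. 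With that one-line addition, everything else in your argument (including deducing the $\liminf_{t\to0}$ form of the lower bound from the pointwise estimate $N_p(f,t)\ge c\,E_{p,\Psi_p}(f,C\Psi_2^{-1}(t))$) goes through as you describe.
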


\begin{theorem}
Assume  the volume growth function $\Phi$ is given by
\[
\Phi (r)=r^{d_h}
\]
with $d_h \ge 1$. Let  $\alpha_p=1+\frac{d_h-1}{p} $. Then, for   $0\le \alpha \le\alpha_p$
\begin{align*}
\mathcal{B}^{p,\alpha}(X)=
\begin{cases}
(L^1(X,m), BV(X,m))_{\alpha/\alpha_1,\infty}, &  p=1 \\
(L^p(X,m), W^{1,p}(X,m))_{\alpha/\alpha_p,\infty}, &  p>1.
\end{cases}
\end{align*}
\end{theorem}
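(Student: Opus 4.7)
The proof strategy for the interpolation theorem on global trees is to establish the two-sided $K$-functional equivalence at every scale $r>0$, as was done in the compact case after Theorem~3.11, but now without the need to treat separately the regime $r>\diam(X)$. I would first set up the real interpolation framework with $X_0=L^p(X,m)$ (or $L^1$ when $p=1$) and $X_1=W^{1,p}(X,m)$ (or $BV(X,m)$) equipped with the seminorm $\|\partial h\|_{L^p(\mathcal{S},\nu)}$ (respectively $\|\partial h\|(X)$). The whole result reduces to showing
\[
C_1\,E_{p,0}(f,r)^{1/p}\;\le\;K(f,r^{\alpha_p})\;\le\;C_2\,E_{p,0}(f,r)^{1/p}\qquad\text{for all }r>0,
\]
because the change of variable $t=r^{\alpha_p}$ transforms the interpolation norm $\sup_{t>0}t^{-\alpha/\alpha_p}K(f,t)$ into $\sup_{r>0}r^{-\alpha}E_{p,0}(f,r)^{1/p}$, which is exactly the $\mathcal{B}^{p,\alpha}(X)$ seminorm up to constants.

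The lower bound on $K(f,r^{\alpha_p})$ follows the same Minkowski splitting as in the compact case: for any decomposition $f=g+h$, one has $E_{p,0}(f,r)^{1/p}\le E_{p,0}(g,r)^{1/p}+E_{p,0}(h,r)^{1/p}$; the first term is estimated by $C\|g\|_{L^p(X,m)}$ using $|g(x)-g(y)|^p\le 2^{p-1}(|g(x)|^p+|g(y)|^p)$ together with Fubini, while the second is estimated by $Cr^{\alpha_p}\|\partial h\|_{L^p(\mathcal{S},\nu)}$ via the global weak monotonicity $\sup_{r>0}E_{p,\Psi_p}(h,r)\le C\int_{\mathcal S}|\partial h|^p d\nu$ from Theorem~\ref{weak monotonicity Lp global tree}, recognizing that $\Psi_p(r)=r^{p-1+d_h}=r^{p\alpha_p}$ under the present hypothesis $\Phi(r)=r^{d_h}$. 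Taking the infimum over admissible decompositions yields the bound. For the upper bound, I would reproduce the discrete-convolution construction used in the proof of Theorem~\ref{weak monotonicity Lp}: given $r>0$, set $\varepsilon=r/12$, take the partition of unity $\{\pip_i^\varepsilon\}$ from Theorem~\ref{partition unity} (available for arbitrary $\varepsilon>0$ in a global tree since every ball is a tree), and form $f_\varepsilon=\sum_i f_{B_i^\varepsilon}\pip_i^\varepsilon$ with $g_\varepsilon=f-f_\varepsilon$. The same computations then give $\|g_\varepsilon\|_{L^p(X,m)}^p\le CE_{p,0}(f,12\varepsilon)$ and $\int_{\mathcal S}|\partial f_\varepsilon|^p d\nu\le C\varepsilon^{-p\alpha_p}E_{p,0}(f,12\varepsilon)$, whence
\[
K(f,r^{\alpha_p})\;\le\;\|g_\varepsilon\|_{L^p(X,m)}+r^{\alpha_p}\|\partial f_\varepsilon\|_{L^p(\mathcal{S},\nu)}\;\le\;CE_{p,0}(f,r)^{1/p}.
\]

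The obstacle that was present in the compact case, namely the separate treatment of scales $r>\diam(X)$ by subtracting a mean value, disappears entirely here: the globally doubling volume growth and the availability of the controlled partition of unity at every scale allow the discrete-convolution argument to cover all $r>0$ uniformly. The only point to verify is that the auxiliary ingredients---Theorem~\ref{partition unity}, the Morrey estimate of Theorem~\ref{thm:Morrey}, and the weak monotonicity of the $p$-energy---do not implicitly rely on a small-scale cutoff in the global tree setting; but this is precisely what Theorem~\ref{weak monotonicity Lp global tree} certifies, so the compact-case argument transports essentially verbatim. The case $p=1$ is handled identically, with $W^{1,1}$ replaced by $BV$ and $\|\partial h\|_{L^1}$ replaced by $\|\partial h\|(X)$, invoking the $p=1$ statement of Theorem~\ref{weak monotonicity Lp global tree} and the analogous bound on $\|\partial f_\varepsilon\|(X)$.
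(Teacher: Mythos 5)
Your proposal is correct and follows precisely the route the paper intends: the paper states this theorem with the proof ``left to the reader'' as a straightforward modification of the compact-case argument after Theorem 3.11, and your globalization of the two-sided $K$-functional estimate $C_1E_{p,0}(f,r)^{1/p}\le K(f,r^{\alpha_p})\le C_2E_{p,0}(f,r)^{1/p}$ to all $r>0$ --- using Theorem \ref{weak monotonicity Lp global tree} for the lower bound and the all-scales partition of unity with the discrete convolution $f_\varepsilon$ for the upper bound --- is exactly that modification. The one small point you gloss over is that the Besov seminorm only involves scales $r<1$ while the interpolation norm requires $\sup_{t>0}$; for $t\ge 1$ the trivial bound $K(f,t)\le\|f\|_{L^p(X,m)}$ (take $h=0$) closes the equivalence of the two spaces.
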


\begin{theorem}\label{Nash global tree} 
Assume the volume growth function $\Phi$ is given by
\[
\Phi (r)=r^{d_h}
\]
with $d_h \ge 1$. Let $p>1$. The following Nash inequality holds for every $f \in W^{1,p}(X,m)$,
\[
\| f \|_{L^p(X,m)} \le C \| \partial f \|_{L^p(\mathcal S,\nu)}^{\theta} \| f \|^{1-\theta}_{L^1(X,m)}
\]
where $\theta=\frac{(p-1)d_h}{p-1+pd_h}$. 
\end{theorem}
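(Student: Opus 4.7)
The plan is to adapt the proof of Theorem~\ref{Nash local tree} to the global setting, exploiting the fact that in the global tree case all the ingredients hold uniformly for every $r>0$ rather than only for $r<r_0$. This scale invariance is precisely what will allow the additional $\|f\|_{L^p(X,m)}$ term appearing in the local Nash inequality to be dropped.

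First, I would reuse the averaging operator $\mathcal{M}_r f(x)=\frac{1}{m(B(x,r))}\int_{B(x,r)}f\,dm$. Under the global Ahlfors regularity $m(B(x,r))\simeq r^{d_h}$, valid for all $r>0$, one obtains the ultracontractive bound $\|\mathcal{M}_r f\|_{L^\infty(X,m)}\le C r^{-d_h}\|f\|_{L^1(X,m)}$. Combining this with the obvious $L^1$-contractivity of $\mathcal{M}_r$ through standard $L^1$--$L^\infty$ interpolation gives, for every $r>0$,
\[
\|\mathcal{M}_r f\|_{L^p(X,m)}\le C\, r^{-d_h(p-1)/p}\|f\|_{L^1(X,m)}.
\]

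Next, I would invoke the global Korevaar--Schoen characterization from Theorem~\ref{weak monotonicity Lp global tree}, which yields $E_{p,\Psi_p}(f,r)\le C\|\partial f\|_{L^p(\mathcal S,\nu)}^p$ for every $r>0$. Applying Jensen's inequality inside the average defining $\mathcal{M}_r f(x)-f(x)$ then produces, for every $r>0$,
\[
\|f-\mathcal{M}_r f\|_{L^p(X,m)}\le C\,r^{\alpha_p}\|\partial f\|_{L^p(\mathcal S,\nu)}, \qquad \alpha_p=1+\frac{d_h-1}{p}.
\]

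Finally, the triangle inequality yields
\[
\|f\|_{L^p(X,m)}\le C\,r^{\alpha_p}\|\partial f\|_{L^p(\mathcal S,\nu)}+C\,r^{-d_h(p-1)/p}\|f\|_{L^1(X,m)}
\]
for every $r>0$, and optimizing the right-hand side by balancing the two terms (choosing $r$ so that $r^{\alpha_p}\|\partial f\|_{L^p(\mathcal S,\nu)}=r^{-d_h(p-1)/p}\|f\|_{L^1(X,m)}$) delivers the stated inequality with $\theta=\frac{(p-1)d_h}{p-1+pd_h}$ after a short algebraic check of the exponents. I do not anticipate any substantive obstacle here: the only point requiring attention is to verify that the two ingredient estimates genuinely hold for \emph{all} $r>0$ under the global volume assumption, which is essentially automatic from Theorem~\ref{weak monotonicity Lp global tree}. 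Equivalently, one can simply quote \cite[Theorem~9.1]{MR1386760} as in the proof of Theorem~\ref{Nash local tree}, but with the global (rather than merely small-scale) versions of its hypotheses, directly producing the pure Gagliardo--Nirenberg inequality without the superfluous $\|f\|_{L^p}$ term.
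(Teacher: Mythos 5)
Your proposal is correct and follows essentially the same route as the paper: the paper proves the local version (Theorem \ref{Nash local tree}) from exactly these two ingredients --- the $L^1\to L^\infty$ bound for $\mathcal{M}_r$ and the bound $\|f-\mathcal{M}_rf\|_{L^p}\le Cr^{\alpha_p}\|\partial f\|_{L^p}$ coming from the Korevaar--Schoen characterization --- and declares the global statement a straightforward modification, which is precisely what you carry out. The only difference is cosmetic: where the paper concludes by citing \cite[Theorem 9.1]{MR1386760}, you balance the two terms explicitly over $r>0$, which is a slightly more self-contained way to reach the same exponent $\theta=\frac{(p-1)d_h}{p-1+pd_h}$.
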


\subsection{$L^p$ heat kernel gradient bounds}

The goal of this section is to establish $L^p$ gradient bounds for the heat semigroup $P_t$ of the Dirichlet form $\mathcal{E}_2$. We start with the following lemma.

\begin{lemma}\label{Estimate LP}
\

\begin{enumerate}
\item Let $p\ge 1$. There exists a constant $C>0$ such that for every $f \in W^{1,p}(X,m)$ and $t>0$
\[
\| L P_t f \|_{L^p(X,m)} \le  C\frac{ \Psi_2^{-1}( t)^{1-\frac{2}{p}} }{ t^{1-\frac{1}{p}}}  \| \partial  f \|_{L^p(\mathcal{S},\nu)}
\]
\item There exists a constant $C>0$ such that for every $f \in W^{1,\infty}(X,m)$ and $t>0$
\[
\| L P_t f \|_{L^\infty(X,m)} \le  C\frac{ \Psi_2^{-1}( t)}{ t}  \| \partial  f \|_{L^\infty(\mathcal{S},\nu)}
\]

\end{enumerate}
\end{lemma}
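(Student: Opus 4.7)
The plan is to exploit the two-sided heat kernel estimates from Theorem \ref{heat kernel global} combined with the tree structure. The starting identity is
\begin{equation*}
LP_t f(x) = \int_X \partial_t p_t(x,y)\, f(y)\, dm(y) = \int_X \partial_t p_t(x,y)\bigl(f(y) - f(x)\bigr) dm(y),
\end{equation*}
where the second equality uses conservativeness of the semigroup ($P_t 1 = 1$). The first technical input I would need is a pointwise bound of sub-Gaussian type on the time derivative:
\begin{equation*}
|\partial_t p_t(x,y)| \le \frac{C_1}{t\,\Phi(\Psi_2^{-1}(t))}\exp\!\left(-C_2 \frac{d(x,y)}{\Phi^{-1}(t/d(x,y))}\right),
\end{equation*}
which follows from the two-sided heat kernel bounds and standard analyticity/spectral arguments for self-adjoint Dirichlet semigroups (using $\partial_t p_t = L_x p_t$ together with the on-diagonal bound $\|L e^{tL}\|_{L^2 \to L^2} \le C/t$, or equivalently Cauchy's formula on the holomorphic extension). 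I will denote $r_t := \Psi_2^{-1}(t)$, so $t = r_t \Phi(r_t)$.

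For part (ii), I would substitute the $L^\infty$ Morrey estimate $|f(y) - f(x)| \le d(x,y)\,\|\partial f\|_{L^\infty(\mathcal S,\nu)}$ from Theorem \ref{thm:Morrey} and then compute the sub-Gaussian moment
\begin{equation*}
\int_X |\partial_t p_t(x,y)|\, d(x,y)\, dm(y) \le \frac{C\, r_t \Phi(r_t)}{t\,\Phi(r_t)} = \frac{C\, r_t}{t},
\end{equation*}
where the integral of $d(x,y)\exp(-c d(x,y)/\Phi^{-1}(t/d(x,y)))$ localizes to the scale $r_t$ and picks up volume $\Phi(r_t)$. This gives (ii) directly.

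For part (i) with $p>1$, I would apply Morrey in the form $|f(y)-f(x)|^p \le d(x,y)^{p-1}\int_{]x,y[}|\partial f|^p d\nu$ and Hölder in $y$ with exponents $(q,p)$, $q = p/(p-1)$, to obtain pointwise
\begin{equation*}
|LP_t f(x)|^p \le \left(\int_X |\partial_t p_t(x,y)| d(x,y) dm(y)\right)^{p-1} \int_X |\partial_t p_t(x,y)| \int_{]x,y[}|\partial f|^p d\nu\, dm(y).
\end{equation*}
The first factor is controlled by $(C r_t / t)^{p-1}$ by the previous paragraph. Integrating over $x$ and applying Fubini on $z \in \mathcal S$ yields
\begin{equation*}
\|LP_t f\|_{L^p(X,m)}^p \le \left(\frac{C r_t}{t}\right)^{p-1} \int_\mathcal{S} |\partial f|^p(z)\, Q(z)\, d\nu(z), \quad Q(z):= \iint \mathbf 1_{\{z\in\,]x,y[\}} |\partial_t p_t(x,y)| dm(x) dm(y).
\end{equation*}
The case $p=1$ is the degenerate limit and is obtained directly (no Hölder needed, just $|f(y)-f(x)|\le \int_{]x,y[}|\partial f| d\nu$ followed by Fubini).

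The main obstacle, and the step that genuinely uses the tree structure, is the uniform bound $Q(z) \le C/r_t$. The key observation is that when $z\in\,]x,y[$ the tree geometry forces $d(x,y) = d(x,z) + d(z,y)$. Setting $F(r) := r/\Phi^{-1}(t/r)$, one checks from the two-sided polynomial control \eqref{volume assumption} that $F$ is convex with $F(0)=0$ and satisfies a super-additivity estimate $F(a+b) \ge c\bigl(F(a) + F(b)\bigr)$. Hence the sub-Gaussian exponential factorizes along the split at $z$, and after dropping the indicator we obtain
\begin{equation*}
Q(z) \le \frac{C}{t\,\Phi(r_t)}\left(\int_X \exp(-c' F(d(x,z))) dm(x)\right)^{\!2} \le \frac{C\,\Phi(r_t)^2}{t\,\Phi(r_t)} = \frac{C\,\Phi(r_t)}{t} = \frac{C}{r_t},
\end{equation*}
using $t = r_t \Phi(r_t)$. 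Combining everything,
\begin{equation*}
\|LP_t f\|_{L^p}^p \le C\, \frac{r_t^{p-2}}{t^{p-1}} \|\partial f\|_{L^p(\mathcal S,\nu)}^p,
\end{equation*}
which is exactly the claim of (i). The two ingredients I expect to require the most care are the pointwise time-derivative heat kernel bound and the verification of the super-additivity of $F$ under the general polynomial control on $\Phi$; the rest is bookkeeping with Hölder and Fubini.
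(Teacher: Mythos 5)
Your proposal is correct, but it follows a genuinely different route from the paper. The paper's proof is softer: it combines the functional inequality $\|P_t f-f\|_{L^p}^p\le C\,\Psi_p(\Psi_2^{-1}(t))\,\|\partial f\|_{L^p(\mathcal S,\nu)}^p$ (a direct consequence of the heat-kernel Besov characterization in Theorem \ref{weak monotonicity Lp global tree}) with the analyticity bound $\|LP_t\|_{L^p\to L^p}\le C/t$, and then runs a telescoping argument $LP_{2t}f=\sum_{k\ge 1}(LP_{2^kt}f-LP_{2^{k-1}t}f)$, estimating each increment by $\frac{1}{2^{k-1}t}\|P_{2^{k-1}t}f-f\|_{L^p}$ and summing the geometric-type series. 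You instead work pointwise with the kernel: you need the sub-Gaussian bound on $|\partial_t p_t|$, which is exactly the Davies estimate \eqref{Davies estimate} combined with the upper bound of Theorem \ref{heat kernel global} (so this ingredient is already available in the paper), and then Morrey, H\"older and Fubini reduce everything to the bound $Q(z)\le C/\Psi_2^{-1}(t)$, which you obtain by splitting the geodesic at $z$ and using $d(x,y)=d(x,z)+d(z,y)$. Two small remarks: your super-additivity of $F(r)=r/\Phi^{-1}(t/r)$ needs no convexity, since $F$ is increasing and hence $F(a+b)\ge\max(F(a),F(b))\ge\tfrac12(F(a)+F(b))$; and your starting identity requires conservativeness $P_t1=1$ to subtract $f(x)$, which holds here by the polynomial volume growth (the paper's route needs the same fact, hidden in its first ingredient). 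What each approach buys: yours exploits the tree geometry explicitly and gives a self-contained kernel-level argument; the paper's telescoping trick avoids any pointwise manipulation of $\partial_t p_t$ beyond the $L^p\to L^p$ bound $C/t$ and transfers more easily to settings where only the Besov characterization and analyticity are known.
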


\begin{proof}
We first assume $p <\infty$. The first ingredient is the estimate 
\begin{align*}
\| P_t f -f \|^p_{L^p(X,m)} \le \int_X \int_X p_t(x,y)|f(x)-f(y)|^p dm(x)dm(y)  \le C \Psi_p(\Psi_2^{-1}(t) ) \| \partial  f \|^p_{L^p(\mathcal{S},\nu)}
\end{align*}
which follows from Theorem \ref{weak monotonicity Lp global tree}. The second ingredient is the estimate
\[
\| L P_t f \|_{L^p(X,m)} \le \frac{C}{t} \|  f \|_{L^p(X,m)}
\]
which for $p>1$ follows from analyticity of the semigroup and which for $p=1$ follows from the following upper bound
\begin{align}\label{Davies estimate}
\left| \frac{\partial}{\partial t} p_t (x,y) \right| \le \frac{C}{t} p_{ct}(x,y)
\end{align}
which is a consequence of \cite{MR1423289} and Theorem \ref{heat kernel global}. In particular,  it follows that $\lim_{t \to +\infty} \| LP_t f \|_{L^p(X,\mu)} =0$. Then, we have by the semigroup property of $P_t$ that
\begin{align*}
\| LP_{2t} f \|_{L^p(X,\mu)}  =\left\| \sum_{k=1}^{\infty}( LP_{2^k t} f -  LP_{2^{k-1} t} f )  \right \|_{L^p(X,\mu)} 
 & \le  \sum_{k=1}^{\infty} \left\| LP_{2^k t} f -  LP_{2^{k-1} t} f  \right \|_{L^p(X,\mu)} \\
 & \le \sum_{k=1}^{\infty} \left\| LP_{2^{k-1} t} (P_{2^{k-1}t} f -  f)  \right \|_{L^p(X,\mu)} \\
 &\le \sum_{k=1}^{\infty} \frac{1}{2^{k-1} t}  \left\| P_{2^{k-1}t} f -  f  \right \|_{L^p(X,\mu)} \\
 &\le C  \sum_{k=1}^{\infty} \frac{ \Psi_p(\Psi_2^{-1}(2^{k-1} t))^{1/p}}{2^{k-1} t}  \| \partial  f \|_{L^p(\mathcal{S},\nu)} \\
 &\le C\frac{ \Psi_p(\Psi_2^{-1}( t))^{1/p}}{ t}   \| \partial  f \|_{L^p(\mathcal{S},\nu)}.
\end{align*}
For $p=\infty$, one uses instead the estimate
\begin{align*}
|P_t(x)-f(x)| & \le \int_X p_t(x,y) | f(x)-f(y)| dm(y) \\
 & \le  \int_X p_t(x,y) d(x,y) dm(y) \| \partial f \|_{L^\infty(X,m)} \\
 & \le C \Psi_2^{-1}(t)  \| \partial f \|_{L^\infty(X,m)}
\end{align*}
and the estimate
\[
\| L P_t f \|_{L^\infty(X,m)} \le \frac{C}{t} \|  f \|_{L^\infty(X,m)}
\]
which also follows from \eqref{Davies estimate}. The rest of the proof proceeds then as before.
\end{proof}

We can finally  state the $L^p$-gradient bound estimates.

\begin{theorem}
\

\begin{enumerate}
\item Let $p \ge 1$. There exists a constant $C>0$ such that for every $f \in L^{p}(X,m)$ and $t>0$
\[
\| \partial P_t f \|_{L^p(X,m)} \le  C\frac{ \Psi_2^{-1}( t)^{-1+\frac{2}{p}}}{ t^{1/p}}  \|  f \|_{L^p(X,m)}
\]
\item There exists a constant $C>0$ such that for every $f \in L^{\infty}(X,m)$ and $t>0$
\[
\| \partial P_t f \|_{L^\infty(X,m)} \le  \frac{ C}{ \Psi_2^{-1}( t) }  \|  f \|_{L^\infty(X,m)}.
\]
\end{enumerate}

\end{theorem}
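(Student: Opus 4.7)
The plan is to handle the endpoint $p=\infty$ directly from the pointwise heat kernel gradient estimate of the previous theorem, and to treat the range $1\le p<\infty$ by a duality argument based on Lemma~\ref{Estimate LP}.

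For the $L^\infty$ bound, I would differentiate under the integral in $P_t f(y)=\int_X p_t(x,y)f(x)\,dm(x)$ and apply the pointwise estimate $|\partial_y p_t(x,y)|\le \frac{C_1}{t}\exp(-C_2 d(x,y)/\Phi^{-1}(t/d(x,y)))$ to obtain
\[
|\partial_y P_t f(y)|\le \frac{C_1 \|f\|_{L^\infty(X,m)}}{t}\int_X \exp\!\left(-C_2\frac{d(x,y)}{\Phi^{-1}(t/d(x,y))}\right)dm(x).
\]
Splitting the integration domain into $B(y,\Psi_2^{-1}(t))$ and the dyadic annuli $\{2^{k-1}\Psi_2^{-1}(t)\le d(x,y)<2^k\Psi_2^{-1}(t)\}$ for $k\ge 1$, the uniform volume control $m(B(y,r))\le C\Phi(r)$ combined with the doubling of $\Phi$ and the decay rate given by $\Phi^{-1}$ bound the remaining integral by $C\Phi(\Psi_2^{-1}(t))$. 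Using the identity $\Phi(\Psi_2^{-1}(t))/t=1/\Psi_2^{-1}(t)$ finishes this case.

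For the range $1\le p<\infty$, let $q\in(1,\infty]$ denote the H\"older conjugate of $p$. Analyticity of the heat semigroup guarantees that $P_t f$ lies in the domain of $L$, and the energy identity together with self-adjointness of $L$ and $P_t$ yields, for every $g\in W^{1,q}(X,m)$,
\[
\int_{\mathcal{S}}\partial P_t f\cdot\partial g\,d\nu=\mathcal{E}_2(P_t f,g)=\langle -LP_t f,g\rangle_m=\langle f,-LP_t g\rangle_m.
\]
By H\"older's inequality and Lemma~\ref{Estimate LP}(i) when $q<\infty$, respectively Lemma~\ref{Estimate LP}(ii) when $q=\infty$,
\[
\left|\int_{\mathcal{S}}\partial P_t f\cdot\partial g\,d\nu\right|\le C\,\frac{\Psi_2^{-1}(t)^{-1+2/p}}{t^{1/p}}\|f\|_{L^p(X,m)}\|\partial g\|_{L^q(\mathcal{S},\nu)},
\]
where the exponents match because $1-2/q=-1+2/p$ and $1-1/q=1/p$ (and for $p=1$, $\Psi_2^{-1}(t)/t=1/\Phi(\Psi_2^{-1}(t))$). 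Taking the supremum over $g\in W^{1,q}(X,m)$ with $\|\partial g\|_{L^q(\mathcal{S},\nu)}\le 1$ would then yield (i), provided the image of $\partial:W^{1,q}(X,m)\to L^q(\mathcal{S},\nu)$ is dense in $L^q(\mathcal{S},\nu)$.

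The main obstacle is establishing this density, which is precisely where the \emph{global} tree hypothesis enters the argument. Given a compactly supported $\phi\in L^q(\mathcal{S},\nu)$, fixing a root $o\in X$ and setting $g(y)=\int_{[o,y]}\phi\,d\nu$ with the orientation from $o$ towards $y$ is unambiguous thanks to the absence of loops; it produces a continuous bounded function with $\partial g=\phi$, taking constant values on each subtree emanating from points past $\mathrm{supp}(\phi)$. A standard cutoff argument then brings $g$ into $W^{1,q}(X,m)$ while only perturbing its gradient on an annulus that can be absorbed into the error. The endpoint case $p=1$, $q=\infty$ requires the weak-$*$ form of duality; the corresponding approximation is still available because $\nu$ is $\sigma$-finite on $\mathcal S$, and compactly supported bounded test functions are weak-$*$ dense in $L^\infty(\mathcal S,\nu)$.
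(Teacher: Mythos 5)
Your argument is essentially a transposed version of the paper's. The paper also proves this theorem by duality through Lemma \ref{Estimate LP}, but it runs the pairing the other way: it fixes $g\in L^q(X,m)$, takes $f$ \emph{piecewise affine} with $\partial f=\sum_i\alpha_i 1_{]a_i,b_i[}$, writes $\int_X g\,LP_tf\,dm=\int_{\mathcal S}\partial P_tg\,\partial f\,d\nu$, and bounds the left-hand side by $\|g\|_{L^q}\|LP_tf\|_{L^p}$ and then by Lemma \ref{Estimate LP}. Since the gradients of piecewise affine functions are exactly the simple functions $\sum_i\alpha_i1_{]a_i,b_i[}$, which are dense in $L^p(\mathcal S,\nu)$, the duality closes with no further work. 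You instead fix $f\in L^p$ and test $\partial P_tf$ against $\partial g$ for general $g\in W^{1,q}$, which is why you are forced to prove that $\partial\bigl(W^{1,q}(X,m)\bigr)$ is dense in $L^q(\mathcal S,\nu)$ --- a statement the paper's choice of test class makes automatic. Your treatment of the $L^\infty$ bound via the pointwise kernel gradient estimate and dyadic annuli is a genuinely different (and sound) route; the paper obtains it as the $q=\infty$ endpoint of the same duality. Both proofs share the same looseness in justifying the integration-by-parts identity outside the $L^2$ theory, so I will not count that against you.

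The one place your route is harder than you let on is the cutoff in the density step. After forming the antiderivative $g(y)=\int_{[o,y]}\phi\,d\nu$ (which is fine on a global tree once the orientation away from the root $o$ is fixed, so that $\partial g=\phi$ is unambiguous and the pairing with $\partial P_tf$ is orientation-independent), you must multiply by a cutoff $\chi_R$ to land in $L^q(X,m)$, and the error term $g\,\partial\chi_R$ must be small in $L^q(\mathcal S,\nu)$. A generic $1/R$-Lipschitz cutoff does not achieve this: $\nu\bigl(\mathcal S\cap(B(o,2R)\setminus B(o,R))\bigr)$ can be infinite (Vicsek-type skeletons), so $\|\partial\chi_R\|_{L^q(\mathcal S,\nu)}$ need not even be finite. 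You need the controlled bump functions of Lemma \ref{bump function}, whose gradient is supported on a set of $\nu$-measure $O(R)$; their construction extends to all scales in the global tree setting because the doubling bound on the number of large branches is scale-free there. With those, $\|g\,\partial\chi_R\|_{L^q(\mathcal S,\nu)}^q\lesssim\|g\|_\infty^q\,R^{1-q}\to0$ for $q>1$, and the $q=\infty$ case is immediate. With that repair, the density step --- and hence your proof --- goes through.
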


\begin{proof}
Let $f$ be a piecewise affine function on $X$ meaning that $f \in AC(X)$ with $\nu$-a.e.
\begin{align}\label{affine}
\partial f =\sum_{i=1}^n \alpha_i 1_{]a_i,b_i[},
\end{align}
where $\alpha_i \in \mathbb{R}$ and $a_i,b_i \in X$. We have then for every $g \in L^1(X,m) \cap L^\infty(X,m)$
\begin{align*}
 \int_X g LP_t f dm  &= \int_X \partial P_t g \, \partial f dm.
\end{align*}
Let now $1 \le p \le \infty$ and let $q$ be the conjugate exponent. We have from Lemma \ref{Estimate LP}
\[
\left| \int_X \partial P_t g \, \partial f dm \right| \le \| g \|_{L^q(X,m)}  \| LP_t f \|_{L^p(X,m)} \le C\frac{ \Psi_2^{-1}( t)^{1-\frac{2}{p}} }{ t^{1-\frac{1}{p}}}    \| g \|_{L^q(X,m)} \| \partial  f \|_{L^p(\mathcal{S},\nu)}.
\]
Since it is true for every functions $f$ of the type \eqref{affine}, this allows to conclude
\[
\| \partial P_t g \|_{L^q(X,m)} \le  C\frac{ \Psi_2^{-1}( t)^{1-\frac{2}{p}} }{ t^{1-\frac{1}{p}}}  \|  g \|_{L^q(X,m)}
\]
This holds for every $g \in L^1(X,m) \cap L^\infty(X,m)$ and therefore every $g \in L^q(X,m)$.
\end{proof}

\bibliographystyle{plain}
\bibliography{bibfile}

\begin{thebibliography}{10}

\bibitem{BV2}
Patricia Alonso-Ruiz, Fabrice Baudoin, Li~Chen, Luke Rogers, Nageswari
  Shanmugalingam, and Alexander Teplyaev.
\newblock Besov class via heat semigroup on {D}irichlet spaces {II}: {BV}
  functions and {G}aussian heat kernel estimates.
\newblock {\em Calc. Var. Partial Differential Equations}, 59(3):Paper No.103,
  32, 2020.

\bibitem{BV3}
Patricia Alonso-Ruiz, Fabrice Baudoin, Li~Chen, Luke Rogers, Nageswari
  Shanmugalingam, and Alexander Teplyaev.
\newblock Besov class via heat semigroup on {D}irichlet spaces {III}: {BV}
  functions and sub-{G}aussian heat kernel estimates.
\newblock {\em Calc. Var. Partial Differential Equations}, 60(5):Paper No. 170,
  38, 2021.

\bibitem{BV1}
Patricia Alonso~Ruiz, Fabrice Baudoin, Li~Chen, Luke~G. Rogers, Nageswari
  Shanmugalingam, and Alexander Teplyaev.
\newblock Besov class via heat semigroup on {D}irichlet spaces {I}: {S}obolev
  type inequalities.
\newblock {\em J. Funct. Anal.}, 278(11):108459, 48, 2020.

\bibitem{alvarado2023simple}
Ryan Alvarado, Piotr Hajłasz, and Lukáš Malý.
\newblock A simple proof of reflexivity and separability of $n^{1,p}$ sobolev
  spaces, 2023.

\bibitem{AEW}
Siva Athreya, Michael Eckhoff, and Anita Winter.
\newblock Brownian motion on {$\Bbb{R}$}-trees.
\newblock {\em Trans. Amer. Math. Soc.}, 365(6):3115--3150, 2013.

\bibitem{MR1386760}
D.~Bakry, T.~Coulhon, M.~Ledoux, and L.~Saloff-Coste.
\newblock Sobolev inequalities in disguise.
\newblock {\em Indiana Univ. Math. J.}, 44(4):1033--1074, 1995.

\bibitem{BarlowLectures}
Martin~T. Barlow.
\newblock Diffusions on fractals.
\newblock In {\em Lectures on probability theory and statistics
  ({S}aint-{F}lour, 1995)}, volume 1690 of {\em Lecture Notes in Math.}, pages
  1--121. Springer, Berlin, 1998.

\bibitem{Bau22}
F.~Baudoin.
\newblock Korevaar-{S}choen-{S}obolev spaces and critical exponents in metric
  measure spaces.
\newblock {\em Ann. Fenn. Math.}, 49(2):487--527, 2024.

\bibitem{MR4494039}
Fabrice Baudoin and Li~Chen.
\newblock Sobolev spaces and {P}oincar\'e{} inequalities on the {V}icsek
  fractal.
\newblock {\em Ann. Fenn. Math.}, 48(1):3--26, 2023.

\bibitem{MR2759829}
Haim Brezis.
\newblock {\em Functional analysis, {S}obolev spaces and partial differential
  equations}.
\newblock Universitext. Springer, New York, 2011.

\bibitem{chen2024besovlipschitznormpenergymeasure}
Aobo Chen, Jin Gao, Zhenyu Yu, and Junda Zhang.
\newblock Besov-lipschitz norm and $p$-energy measure on scale-irregular vicsek
  sets, 2024.

\bibitem{Chiswell}
Ian Chiswell.
\newblock {\em Introduction to {$\Lambda$}-trees}.
\newblock World Scientific Publishing Co., Inc., River Edge, NJ, 2001.

\bibitem{CoulhonOff}
Thierry Coulhon.
\newblock Off-diagonal heat kernel lower bounds without {P}oincar\'e.
\newblock {\em J. London Math. Soc. (2)}, 68(3):795--816, 2003.

\bibitem{MR1423289}
E.~B. Davies.
\newblock Non-{G}aussian aspects of heat kernel behaviour.
\newblock {\em J. London Math. Soc. (2)}, 55(1):105--125, 1997.

\bibitem{DRY23}
Baptiste Devyver, Emmanuel Russ, and Meng Yang.
\newblock Gradient estimate for the heat kernel on some fractal-like cable
  systems and quasi-{R}iesz transforms.
\newblock {\em Int. Math. Res. Not. IMRN}, (18):15537--15583, 2023.

\bibitem{Evans08}
Steven~N. Evans.
\newblock {\em Probability and real trees}, volume 1920 of {\em Lecture Notes
  in Mathematics}.
\newblock Springer, Berlin, 2008.
\newblock Lectures from the 35th Summer School on Probability Theory held in
  Saint-Flour, July 6--23, 2005.

\bibitem{gao2024holderregularityharmonicfunctions}
Jin Gao and Meng Yang.
\newblock H\"older regularity of harmonic functions on metric measure spaces,
  2024.

\bibitem{GKS}
Amiran Gogatishvili, Pekka Koskela, and Nageswari Shanmugalingam.
\newblock Interpolation properties of {B}esov spaces defined on metric spaces.
\newblock {\em Math. Nachr.}, 283(2):215--231, 2010.

\bibitem{Gromov}
Misha Gromov.
\newblock {\em Metric structures for {R}iemannian and non-{R}iemannian spaces},
  volume 152 of {\em Progress in Mathematics}.
\newblock Birkh\"auser Boston, Inc., Boston, MA, 1999.
\newblock Based on the 1981 French original [MR0682063 (85e:53051)], With
  appendices by M.\ Katz, P.\ Pansu and S.\ Semmes, Translated from the French
  by Sean Michael Bates.

\bibitem{MR1401074}
Piotr Haj\l~asz.
\newblock Sobolev spaces on an arbitrary metric space.
\newblock {\em Potential Anal.}, 5(4):403--415, 1996.

\bibitem{MR3363168}
Juha Heinonen, Pekka Koskela, Nageswari Shanmugalingam, and Jeremy~T. Tyson.
\newblock {\em Sobolev spaces on metric measure spaces}, volume~27 of {\em New
  Mathematical Monographs}.
\newblock Cambridge University Press, Cambridge, 2015.
\newblock An approach based on upper gradients.

\bibitem{kajino2024korevaarschoenpenergyformsassociated}
Naotaka Kajino and Ryosuke Shimizu.
\newblock Korevaar-schoen $p$-energy forms and associated $p$-energy measures
  on fractals, 2024.

\bibitem{kajino2025penergyformsfractalsrecent}
Naotaka Kajino and Ryosuke Shimizu.
\newblock $p$-energy forms on fractals: recent progress, 2025.

\bibitem{Kig95}
Jun Kigami.
\newblock Harmonic calculus on limits of networks and its application to
  dendrites.
\newblock {\em J. Funct. Anal.}, 128(1):48--86, 1995.

\bibitem{MR1266480}
Nicholas~J. Korevaar and Richard~M. Schoen.
\newblock Sobolev spaces and harmonic maps for metric space targets.
\newblock {\em Comm. Anal. Geom.}, 1(3-4):561--659, 1993.

\bibitem{KumagaiPRIMS}
Takashi Kumagai.
\newblock Heat kernel estimates and parabolic {H}arnack inequalities on graphs
  and resistance forms.
\newblock {\em Publ. Res. Inst. Math. Sci.}, 40(3):793--818, 2004.

\bibitem{Mir03}
M.~Miranda, Jr.
\newblock Functions of bounded variation on ``good'' metric spaces.
\newblock {\em J. Math. Pures Appl. (9)}, 82(8):975--1004, 2003.

\bibitem{MR4009428}
Mathav Murugan.
\newblock Quasisymmetric uniformization and heat kernel estimates.
\newblock {\em Trans. Amer. Math. Soc.}, 372(6):4177--4209, 2019.

\bibitem{murugan2025firstordersobolevspacesselfsimilar}
Mathav Murugan and Ryosuke Shimizu.
\newblock First-order sobolev spaces, self-similar energies and energy measures
  on the sierpi\'{n}ski carpet, 2025.

\bibitem{sasaya2025constructionpenergymeasuresassociated}
Kôhei Sasaya.
\newblock Construction of $p$-energy measures associated with strongly local
  $p$-energy forms, 2025.

\bibitem{shimizu2025characterizationssobolevfunctionsbesovtype}
Ryosuke Shimizu.
\newblock Characterizations of sobolev functions via besov-type energy
  functionals in fractals, 2025.

\end{thebibliography}

\vspace{5pt}
\noindent
\begin{minipage}{\textwidth}
    \small
    \textbf{Fabrice Baudoin:} \\
    Department of Mathematics, Aarhus University \\
    Email: fbaudoin@math.au.dk
\end{minipage}

\vspace{10pt} % Space between authors

\noindent
\begin{minipage}{\textwidth}
    \small
    \textbf{Li Chen:} \\
    Department of Mathematics, Aarhus University \\
    Email: lchen@math.au.dk
\end{minipage}

\vspace{10pt} % Space between authors

\noindent
\begin{minipage}{\textwidth}
    \small
    \textbf{Meng Yang:} \\
    Department of Mathematics, Aarhus University \\
    Email: yang@math.au.dk
\end{minipage}

\end{document}